\newtheorem{thm}{Theorem}
\newtheorem{prop}{Proposition}
\newtheorem{lem}{Lemma}
\numberwithin{equation}{section}
\numberwithin{prop}{section}
\numberwithin{lem}{section}
\numberwithin{thm}{section}
\newtheorem{cor}{Corollary}
\numberwithin{cor}{section}
\theoremstyle{definition}
\newtheorem{defn}{Definition}
\numberwithin{defn}{section}
\newtheorem{example}{Example}
\newtheorem{rem}{Remark}
\numberwithin{rem}{section}
\newcommand{\nop}[1]{{}^{\scriptscriptstyle{\circ}}_{\scriptscriptstyle{\circ}}{#1}{}^{\scriptscriptstyle{\circ}}_{\scriptscriptstyle{\circ}}}
\def \<{\left<}
\def \>{\right>}
\def \a{\alpha }
\def \b{\beta }
\newcommand{\bea}{\begin{eqnarray}}
\newcommand{\eea}{\end{eqnarray}}
\newcommand{\be}{\begin {equation}}
\newcommand{\ee}{\end{equation}}
\newcommand{\h}{\mathfrak{h}}
\newcommand{\wt}{{\rm {wt} }   }
\newcommand{\hh}{\hat {\frak h} }
\newcommand{\Z}{\Bbb Z}
\newcommand{\xa}[2]{x^{\tilde{\nu}}_{\alpha_{#1}}\left(#2\right)}
\newcommand{\hnu}{\hat{\nu}}
\newcommand{\n}{\mathfrak{n}}
\newcommand{\ta}[1]{\tau_{\gamma_{#1}}}
\newcommand{\on}{\overline{\n}[\hnu]}
\newcommand{\tp}[1]{\psi_{\gamma_{#1}}\tau_{\gamma_{#1}}}
\newcommand{\ZZ}{\mathbb{Z}}
\newcommand{\CC}{\mathbb{C}}
\newcommand{\QQ}{\mathbb{Q}}
\newcommand{\ea}[2]{\left(e^{\alpha_{#1}}\right)^{\hnu}_{#2}}
\newcommand{\tnu}[0]{\tilde{\nu}}
\begin{document}
\title{Principal Subspaces of Twisted Modules for Certain Lattice Vertex Operator Algebras}
\author{Michael Penn, Christopher Sadowski, and Gautam Webb}

\begin{abstract} 
This is the third in a series of papers studying the vertex-algebraic structure of principal subspaces of twisted modules for lattice vertex operator algebras. We focus primarily on lattices $L$ whose Gram matrix contains only non-negative entries. We develop further ideas originally presented by Calinescu, Lepowsky, and Milas to find presentations (generators and relations) of the principal subspace of a certain natural twisted module for the vertex operator algebra $V_L$. We then use these presentations to construct exact sequences involving this principal subspace, which give a set of recursions satisfied by the multigraded dimension of the principal subspace and allow us to find the multigraded dimension of the principal subspace.   
\end{abstract}

\maketitle

\section{Introduction}
Principal subspaces of standard (highest weight integrable modules) for affine Lie algebras have received considerable attention and study since first being defined and studied by Feigin and Stoyanovsky in \cite{FS1}-\cite{FS2}, and the theory of principal subspaces has been developed by many authors, including \cite{G}, \cite{AKS}, \cite{FFJMM}, \cite{Bu1}-\cite{Bu3},\cite{Ka1}-\cite{Ka2}, \cite{MPe}, and many others. We note that certain ``commutative" analogues of principal subspaces have been also defined and studied in \cite{T1}-\cite{T3}, \cite{J1}-\cite{J2}, \cite{JP}, \cite{Pr}, \cite{P} and others, and quantum analogues have been studied in \cite{Ko1}-\cite{Ko2}.
We base our approach in this paper on the vertex-algebraic methods developed in \cite{CLM1}-\cite{CLM2}, \cite{CalLM1}-\cite{CalLM4}, \cite{S1}-\cite{S2}, \cite{C1}, and \cite{PS1}-\cite{PS2}. We note that this paper uses many of the techniques developed in these works, but that the principal subspace we study is a twisted analogue of the ``commutative" structures mentioned above.

This paper is the third in a series of papers by the authors studying the vertex-algebraic structure of principal subspaces of twisted modules for lattice vertex operator algebras (the first two papers being \cite{PS1} and \cite{PS2}). The study of principal subspaces of twisted modules for lattice vertex operator algebras was initiated by Calinescu, Lepowsky, and Milas in \cite{CalLM4}. In \cite{CalLM4}, the authors introduced the notion of principal subspace of a twisted module for a lattice vertex operator algebra, and studied the principal subspace of the basic module for the twisted affine Lie algebra $A_2^{(2)}$. Using ideas in \cite{CLM1}-\cite{CLM2} and \cite{CalLM1}-\cite{CalLM3}, the authors gave a presentation (generators and relations) of the principal subspace and proved that certain natural relations form a complete set of relations for the principal subspace. Using these presentations, the authors then constructed exact sequences among the principal subspace, which yields a recursion satisfied by the multigraded dimension of the principal subspace. Solving this recursion, the authors were able to find the multigraded dimension of the principal subspace, which is related to the generating function for partitions of a positive integer into odd and distinct parts (cf. Corollary 7.4 in \cite{CalLM4}). The ideas in \cite{CalLM4} were later extended to study principal subspaces of standard modules for the twisted affine Lie algebras of type $A_{2n}^{(2)}$ in \cite{CalMPe}, the twisted affine Lie algebra of type $D_4^{(3)}$ in \cite{PS1}, and the twisted affine Lie algebras of type $A_{2n+1}^{(2)}, D_n^{(2)},$ and $E_6^{(2)}$ in \cite{PS2}. We note that the important disctinction between the $A_{2n}^{(2)}$ case and the $D_4^{(3)}$ and the $A_{2n+1}^{(2)}, D_n^{(2)},$ and $E_6^{(2)}$ cases is that the Dynkin diagram automorphism used to construct the twisted module in the $A_{2n}^{(2)}$ case needed to have its order doubled in order to be extended to the underlying lattice vertex operator algebra, whereas in the other cases this was not necessary. This doubling significantly complicated the defining relations of the principal subspace.  We also note here that, in the case of $A_4^{(2)}$, modularity properties of the character are discussed in \cite{CalMPe}.

In this work, we assume $L=\ZZ\alpha_1 \oplus \dots \oplus \ZZ \alpha_D$ is a positive definite even lattice with non-degenerate $\ZZ$-bilinear form $\left< \cdot , \cdot \right>$ satisfying $\left< \alpha_i , \alpha_j \right> \ge 0$ for all $1 \le i,j \le D$, and we assume that $\nu:L \rightarrow L$ is an automorphism of $L$ which permutes the $\alpha_i$ for $1 \le i \le D$. In this setting, we note that we may view $\nu$ in terms of $d$ disjoint cycles from the symmetric group $S_D$, and we take $\alpha^{(i)}$ to be a representative from each orbit we obtain this way for $1 \le i \le d$, and let $l_i$ to be the number of elements in this orbit (or, alternatively, $l_i$ is the length of the cycle). We extend the ideas developed in \cite{CalLM4} to construct a twisted module for the lattice vertex algebra $V_L$, which we call $V_L^T$, and study its principal subspace, which we call $W_L^T$. We note here that this case is in many ways similar to the $A_{2n}^{(2)}$, where we double the order of the automorphism to proceed with our construction.  We provide a set of defining relations for the principle subspace and prove that this is a complete set of defining relations, but note importantly that we require new relations that were not needed in past work, which follow from the theory of twisted vertex operators in \cite{DLeM} (these relations were certainly present in \cite{CalLM4}, \cite{CalMPe}, and \cite{PS1}-\cite{PS2}, but were trivially true and did not need to be mentioned). The proof of the completeness of these relations is highly nontrivial, and requires the use of what we call a ``generalized Pascal matrix" (see the Appendix for its definition and properties). The remainder of the paper is analogous to \cite{CalLM4}, \cite{CalMPe}, and \cite{PS1}-\cite{PS2}. We use our presentations to construct exact sequences involving our principal subspace, and use these exact sequences to find a set of recursions satisfied by the multigraded dimension of the principal subspace. Solving these recursions yields the multigraded dimension of $W_L^T$:
\begin{equation}
\chi^{'}(\mathbf{x};q) = \sum_{{\bf m} \in (\mathbb{Z}_{\ge 0}^d)}\frac{q^{\frac{{\bf m}^t A {\bf m}}{2}}}{(q^{\frac{k}{l_1}};q^{\frac{k}{l_1}})_{m_1} \cdots (q^{\frac{k}{l_d}};q^{\frac{k}{l_d}})_{m_d} }x_1^{m_1}\cdots x_d^{m_d}
\end{equation}
where $k$ is double the order of $\nu$, where for $\alpha \in L$ we define $$\alpha_{(0)} = \frac{1}{k}(\alpha + \nu \alpha + \nu^2 \alpha + \dots + \nu^{k-1}\alpha),$$  and where $A$ is a $(d\times d)$-matrix defined by 
$$A_{i,j} = k\left< \alpha_{(0)}^{(i)}, \alpha_{(0)}^{(j)}\right>.$$ Finally, we produce some examples at the end of the work which are related to known and conjectured partition identities.

\section{The setting}

In this section we follow the construction of lattice vertex operator algebras and their twisted modules as presented in \cite{L1}, \cite{FLM}, \cite{CalLM4}, \cite{L2}, and \cite{LL}. We begin by working in a general setting.

Suppose that we have a rank $D$ positive-definite even lattice
\be
L=\ZZ\a_1\oplus\ZZ\a_2\oplus\cdots\oplus\ZZ\a_D
\ee
equipped with a symmetric non-degenerate $\ZZ$-bilinear form $\<\cdot,\cdot\>:L\times L\to \ZZ$. We also consider the matrix
\be
A=\left(a_{i,j}\right)=\left(\<\a_i,\a_j\>\right)
\ee
and let 
\be 
L_+ = \mathbb{Z}_{\ge 0}\a_1+\mathbb{Z}_{\ge 0}\a_2+\cdots+\\ \mathbb{Z}_{\ge 0}\a_D
\ee
Suppose that $\nu$ is an isometry of $L$ (with respect to $\<\cdot,\cdot\>$) which preserves $L_+$ (in the sense the $\nu(L^+) \subset L^+$) and has finite order $v$. We begin by showing that this type of isometry can be realized as a permutation of the simple roots $\a_1,\dots, \a_D$.
\begin{lem}\label{PosFix}
If $\beta \in L_+$ and $\nu(\alpha) = \beta$ for some $\alpha \in L$, then $\alpha \in L_+$.
\end{lem}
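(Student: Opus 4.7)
The plan is to leverage the finite order of $\nu$ to invert it within $L_+$. Since $\nu^v = \mathrm{id}$, its inverse is $\nu^{v-1}$, so the entire lemma reduces to showing that $\nu^{v-1}$ sends $L_+$ into $L_+$; applying this to $\beta$ will then identify $\alpha = \nu^{v-1}(\beta)$ as an element of $L_+$.

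To carry this out, I would first observe that the hypothesis $\nu(L_+) \subseteq L_+$ propagates to every positive power of $\nu$: by a straightforward induction on $k$, one has $\nu^k(L_+) \subseteq L_+$ for every $k \geq 0$, and in particular $\nu^{v-1}(L_+) \subseteq L_+$. Next, given $\beta \in L_+$ with $\nu(\alpha) = \beta$, I would write
\[
\alpha = \nu^v(\alpha) = \nu^{v-1}\bigl(\nu(\alpha)\bigr) = \nu^{v-1}(\beta),
\]
and conclude $\alpha \in L_+$ from the preceding step.

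I do not anticipate any real obstacle here. The entire content of the lemma is the interplay between $\nu$ preserving $L_+$ and $\nu$ having finite order: together these force $\nu$ to permute $L_+$ bijectively, so a preimage under $\nu$ of something in $L_+$ must already lie in $L_+$. The finiteness of $v$ is essential, since without it the set-theoretic inverse of $\nu$ (considered as a map $L \to L$) could fail to preserve $L_+$ even though $\nu$ itself does.
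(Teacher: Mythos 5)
Your proof is correct and is essentially the same as the paper's: both write $\alpha = \nu^{v-1}(\nu(\alpha)) = \nu^{v-1}(\beta)$ and conclude $\alpha \in L_+$ because $\nu$, and hence each of its positive powers, preserves $L_+$. You simply make explicit the easy induction showing $\nu^{v-1}(L_+) \subseteq L_+$, which the paper leaves implicit.
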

\begin{proof}
We have that $\alpha = \nu^{v-1}(\nu(\alpha)) = \nu^{v-1}(\beta) \in L_+$ since $\beta \in L_+$ and $\nu$ preserves $L_+$.
\end{proof}
\begin{prop}
For each $1 \le i \le D$, we have that $\nu(\alpha_i) = \alpha_j$ for some $1 \le j \le D$.
\end{prop}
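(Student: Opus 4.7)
The plan is to reduce the statement to the observation that the $\alpha_i$ are precisely the ``indecomposable'' elements of the additive monoid $L_+$, and then to check that $\nu$ must preserve this indecomposability.

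First, I would establish that $\nu$ restricts to a bijection of $L_+$ onto itself. As an isometry of a non-degenerate form of finite order $v$, $\nu$ is a bijection of $L$ with inverse $\nu^{v-1}$. The hypothesis gives $\nu(L_+) \subseteq L_+$, while Lemma \ref{PosFix} provides the reverse inclusion $\nu^{-1}(L_+) \subseteq L_+$, since the unique preimage in $L$ of any element of $L_+$ must already lie in $L_+$. Combining these, $\nu|_{L_+}$ is a bijection.

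Next, I would characterize the $\alpha_i$ as the indecomposable elements of $L_+$, meaning those nonzero $\beta \in L_+$ for which $\beta = \gamma_1 + \gamma_2$ with $\gamma_1,\gamma_2 \in L_+$ forces $\gamma_1=0$ or $\gamma_2=0$. Each $\alpha_i$ is indecomposable by a coordinate check in the basis $\{\alpha_1,\ldots,\alpha_D\}$, since the coordinates of elements of $L_+$ are non-negative integers and $\alpha_i$ has a single coordinate equal to $1$. Conversely, any $\beta \in L_+$ that has either a coordinate $\ge 2$ or two nonzero coordinates is decomposable, so no other elements of $L_+$ are indecomposable.

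Finally, the proof would conclude as follows. Since $\alpha_i \in L_+$, we have $\nu(\alpha_i) \in L_+$. Suppose for contradiction that $\nu(\alpha_i) = \beta + \gamma$ with $\beta,\gamma \in L_+$ nonzero. Applying $\nu^{-1}$ gives $\alpha_i = \nu^{-1}(\beta) + \nu^{-1}(\gamma)$, where both summands lie in $L_+$ by the first step and remain nonzero because $\nu^{-1}$ is a bijection sending $0$ to $0$. This contradicts the indecomposability of $\alpha_i$, so $\nu(\alpha_i)$ is itself indecomposable in $L_+$, and must therefore equal $\alpha_j$ for some $1 \le j \le D$. I do not anticipate a substantial obstacle; the only delicate point is setting up the bijection property of $\nu$ on $L_+$ via Lemma \ref{PosFix} before invoking the indecomposability characterization.
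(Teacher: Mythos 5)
Your proof is correct and follows essentially the same approach as the paper's: both arguments come down to the observation that each $\alpha_i$ cannot be split as a sum of two nonzero elements of $L_+$, together with Lemma~\ref{PosFix} to pull a putative decomposition of $\nu(\alpha_i)$ back through $\nu^{-1}$ into $L_+$. The paper writes this as a somewhat terse direct contradiction (assume $\nu(\alpha_i)=\alpha$ is not a basis element, subtract off some $\alpha_j$, pull back, and note $\alpha_i - \gamma$ cannot lie in $L_+$), while you package the same mechanism more transparently by first noting $\nu|_{L_+}$ is a bijection and then characterizing the $\alpha_i$ as exactly the indecomposable elements of the monoid $L_+$; the latter organization is arguably cleaner but the underlying idea is identical.
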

\begin{proof}
Suppose that $\nu(\alpha_i) = \alpha$ for some $\alpha \notin \{\a_1,\dots, \a_D \}$. Since $\alpha \in L_+$ and $\alpha \neq 0$,  there exists $\beta \in L_+$ so that $\a - \b = \a_j$ for some $1 \le j \le D$. Since $\nu$ is an isometry, there exists a nonzero $\gamma \in L$ such that $\nu(\gamma) = \beta$. By Lemma \ref{PosFix}, we have that $\gamma \in L_+$. Hence,
\be
\a_j = \a - \b = \nu(\a_i - \gamma)
\ee
but $\a_i - \gamma \notin L_+$, a contradiction.
(Note: if $\a_i - \gamma \in L_+$, then $\a_i = \gamma$ which implies $\a_j = 0$.)
\end{proof}

\begin{rem} We make note here of the fact that the isometries $\nu$ important in this work are those which preserve $L_+$ and will precisely be those which fix the principal subspaces constructed later in this work. In earlier work \cite{CalLM4}, \cite{CalMPe}, and \cite{PS1}-\cite{PS2}, these automorphisms were given by Dynkin diagram automorphisms. \end{rem}

 We now realize $\nu$ by a permutation, which we will also denote by $\nu$ by decomposing it into $l_d$ disjoint cycles given by
\be\label{perm}
\nu=(1, 2,\cdots, l_1)(l_1+1, l_1+2,\cdots, l_1+l_2)\cdots (l_1+\cdots+l_{d-1}+1,\cdots, l_1+\cdots+l_d),
\ee
where $D=l_1+\cdots+l_d$. By saying that $\nu:L\to L$ is realized by this permutation we mean that
\be
\nu\left(\a_i\right)=\a_{\nu(i)}.
\ee

We now relabel the elements of the $\ZZ$-basis of $L$ to interact more cleanly with the isometry. For each $j$ satisfying $1 \le r \le d$, set
\be
\a_1^{(r)} = \a_{l_1+\cdots+l_{r-1}+1}
\ee
and
\be\label{notation}
\a_j^{(r)}=\a_{l_1+\cdots+l_{r-1}+j}=\nu^{j-1}\a_1^{(r)}.\ee
This allows us to decompose the lattice into orbits of $\nu$ as follows
\be
L=\bigoplus_{r=1}^{d}\bigoplus_{j=1}^{l_r}\ZZ\a_j^{(r)}=\bigoplus_{r=1}^{d}\bigoplus_{j=0}^{l_r-1}\ZZ(\nu^j\a_1^{(r)}).\ee

In addition we will say the $\a_j^{(r)}$ is in the $r^{th}$ cycle of $\nu$.

We continue to follow \cite{CalLM4}, \cite{L1}, and \cite{FLM}. Let $\eta$ be a primitive $k^{\text{th}}$ root of unity, where $k = 2v$ is twice the order of $\nu$. We have two central extensions of $L$ by $\<\eta\>$ which we denote by $\hat{L}$ and $\hat{L}_{\nu}$ with commutator maps $C_0$ and $C$ respectively. In other words, we have two exact sequences (up to equivalence)
\be
1\longrightarrow\left<\eta\right>\longrightarrow\hat{L}\overset{\overline{~~}}{\longrightarrow} L\longrightarrow 1
\ee
and
\be
1\longrightarrow\left<\eta\right>\longrightarrow\hat{L}_{\nu}\overset{\overline{~~}}{\longrightarrow} L\longrightarrow 1
\ee
so that $aba^{-1}b^{-1}=C(\overline{a},\overline{b})$ for $a,b\in\hat{L}$ and $aba^{-1}b^{-1}=C_{0}(\overline{a},\overline{b})$ for $a,b\in\hat{L}_{\nu}$, where, as in \cite{CalLM4}, we define
\be
C(\alpha,\beta) = (-1)^{\left< \alpha,\beta \right>}
\ee
and
\be
C_0(\alpha,\beta) = \prod_{j=0}^{k-1}(-\eta^j)^{\left< \nu^k \alpha, \beta \right>}
\ee
for $\alpha,\beta \in L$.
Following \cite{L1} and \cite{CalLM4}, we let 
\begin{align*}
e:L&\to\hat{L}\\
\a&\mapsto e_{\a}\end{align*}
be a normalized sections of $\hat{L}$ so that 
\be
e_0=1\ee
and
\be\overline{e_{\a}}=\a~\text{ for all }~\a\in L.\ee

 We also have a normalized cocyle $\epsilon_{C_0}$ such that 
\be 
e_{\a}e_{\b}=\epsilon_{C_0}(\a,\b)e_{\a+\b}, \text{ in } \hat{L}
\ee
and 
\be 
\frac{\epsilon_{C_0}(\a,\b)}{\epsilon_{C_0}(\b,\a)}=C_0(\a,\b)
\ee
thus,
\be
e_{\b}e_{\a}=C_0(\a,\b)e_{\a}e_{\b}.
\ee

We have a similar section $e^{\nu}$ and equations concerning $C$ and $\epsilon_C$.  We now lift $\nu$ to an automorphism of $\hat{L}$, denoted by $\hnu$, such that for $a\in\hat{L}$
\be
\overline{\hnu a}=\nu\overline{a}
\ee
and
\be\label{fixed}
\hnu a=a~\text{ if }~ \nu \overline{a}=\overline{a},
\ee
where $\overline{e_{\a}}=\a$ for $\a\in L$. To define this lifting, we make the following particular choices for $C_0$ and $\epsilon_{C_0}$
\be
C_0(\a_i,\a_j)=(-1)^{\<\a_i,\a_j\>}
\ee
and 
\be 
\epsilon(\a_i,\a_j) = \left\{
     \begin{array}{lr}
       1 & \text{ if } i\leq j\\
       (-1)^{\<\a_i,\a_j\>} & \text{ if } i>j.
     \end{array}
   \right.
   \ee

Given $\a\in\{\nu^j\a_1^{(r)}|j\in\mathbb{N}\}=\{\a_j^{(r)}|1\leq j\leq l_r\}$, we define the following lifting of $\nu$ to $\hat{L}$:
\be\label{lifting}
\hnu e_{\a} = \left\{
     \begin{array}{lr}
      e_{\nu\a} & \text{ if $l_r$ is odd} \\
      e_{\nu\a} & \text{ if $l_r$ is even and $\left<\nu^{l_r/2}\a,\a\right>\in 2\ZZ$} \\
       \eta_{2l_r}e_{\nu\a} & \text{ if $l_r$ is even and $\left<\nu^{l_r/2}\a,\a\right>\notin 2\ZZ$},
     \end{array}
   \right.
   \ee
where $\eta_{r}$ is a primitive $r^{th}$ root of unity. Observe that for all $l_j$, we have $\left<\eta_{2l_j}\right>\subset \left<\eta\right>$, so including these roots of unity makes 
sense. Since the conditions defining our lifting will arise later in this work, we give the following definition:

\begin{defn}
We say that $\a_i\in L$ satisfies the \textbf{evenness condition} if $\a_i=\a^{(r)}_j$ for some $0\leq j\leq l_r-1$ and one of the following holds
\begin{enumerate} 
\item $l_r$ is a positive even integer and $\left<\a_i,\nu^{l_r/2}\a_i\right>\in 2\mathbb{Z}$
\item $l_r$ is a positive odd integer.
\end{enumerate}
\end{defn}

Now we take an arbitrary $\a\in L$ such that $\nu\a=\a$ and show that $\hnu e_{\a}=e_{\a}$, so our lifting indeed satisfies (\ref{fixed}). Set 
\be 
\a[r]=\a_1^{(r)}+\cdots+\a_{l_r}^{(r)},\ee
the orbit sum of $\a_1^{(r)}$ under $\left<\nu\right>$
and observe that if $l_r$ is odd we have, for $m\geq 0$, 
\be\begin{aligned}\label{calculation1}
\hnu e_{m\a[r]}&=\hnu(e_{m\a_{1}^{(r)}}\cdots e_{m\a^{(r)}_{l_r}})\\
&=\hnu(e_{m\a_{1}^{(r)}})\cdots \hnu(e_{m\a^{(r)}_{l_r}})\\
&=e_{m\a_{2}^{(r)}}\cdots e_{m\a^{(r)}_{l_r}}e_{m\a_{1}^{(r)}}\\
&=(-1)^{m^2\left(\left<\a_1^{(r)}, \a_2^{(r)}+\cdots + \a_{l_r}^{(r)}\right>\right)}e_{m\a_{1}^{(r)}}\cdots e_{m\a^{(r)}_{l_r}}\\
&=(-1)^{m^2\left(\left<\a_1^{(r)}, \a_2^{(r)}+\cdots + \a_{l_r}^{(r)}\right>\right)}e_{m\a[r]}\\
&=e_{m\a[r]},
\end{aligned}\ee
where the final equality holds because for all $2 \leq j\leq l_r$ we have
\be\label{simplify1}
\left<\a_1^{(r)},\a_j^{(r)}\right>=\left<\nu^{l_r-j+1} \a_1^{(r)},\nu^{l_r-j+1} \a_j^{(r)}\right>=\left<\a_1^{(r)},\a_{l_r-j+2}^{(r)}\right>,
\ee
and thus 
\be\label{simplify2} \left<\a_1^{(r)}, \a_2^{(r)}+\cdots + \a_{l_r}^{(r)}\right>=2\sum_{j=1}^{\frac{l_r+1}{2}}\left<\a_1^{(r)},\a_{j}^{(r)}\right>\in 2\ZZ,\ee
where we have used that fact that $\nu$ is an isometry and $\left<\cdot,\cdot\right>$ is symmetric.

Now we consider the case where $l_r$ is even. Using calculations similar to (\ref{simplify1}) and (\ref{simplify2}) we have 
\be\begin{aligned}\label{calculation2}
\sum_{j=2}^{l_r}\left<\a_1^{(r)},\a_{j}^{(r)}\right>&=\left<\a_1^{(r)},\a_{1+\frac{l_r}{2}}^{(r)}\right>+2\sum_{m=1}^{\frac{l_r}{2}}\left<\a_1^{(r)},\a_{j}^{(r)}\right>\\
&=\left<\a_1^{(r)},\nu^{l_r/2}\a_{1}^{(r)}\right>+2\sum_{m=1}^{\frac{l_r}{2}}\left<\a_1^{(r)},\a_{j}^{(r)}\right>.\end{aligned}\ee
If $\left<\a_1^{(r)},\nu^{l_r/2}\a_{1}^{(r)}\right>\in 2\mathbb{Z}$ we have 
\be \hnu e_{m\a[r]}=e_{m\a[r]}.\ee
with a calculation similar to (\ref{calculation1}). If $\left<\a_1^{(r)},\nu^{l_r/2}\a_{1}^{(r)}\right>\notin 2\mathbb{Z}$ we have 
\be\begin{aligned}\label{calculation3}
\hnu e_{m\a[r]}&=(-1)^{m^2}\eta_{2l_r}^{ml_r}e_{m\a[r]}\\
&=e_{m\a[r]},\end{aligned}\ee
because $\eta_{2l_r}^{l_r}=-1$. Finally, it follows from (\ref{calculation1})-(\ref{calculation3}) that if we take an arbitrary $\a\in L$ such that $\nu\a=\a$ written as
$$\a=m_1\a[1]+\cdots+m_d\a[d],$$
we have 
$$\hnu e_{\a}=e_{\a}$$
as desired.

Now we form the lattice vertex operator algebra, $V_L$, which we recall is characterized by the linear isomorphism
\be
V_L\cong S\left(\hh^{-}\right)\otimes \CC[L],
\ee
where $\h=L\otimes_{\ZZ}\CC$,  $\hh^{-}=\h\otimes t^{-1}\CC[t^{-1}]$, and $\CC[L]$ is the group algebra. We have made use of the linear isomorphism $\CC[L]\cong \CC[\hat{L}]\otimes_{\CC[\<\eta\>]}\CC$.

Recall from \cite{LL} and \cite{FLM} that $V_L$ has a natural weight grading given by 
\be
\wt(h_1(n_1)\cdots h_r(n_r)\iota(e_{\a}))=-(n_1+\cdots+n_r)+\frac{1}{2}\<\a,\a\>.
\ee

The vertex operators on $V_L$ are given by 
\be
Y(h,x)=h(x)=\sum_{n\in\ZZ}h(n)x^{-n-1}
\ee
for $h\in\h$, where we associate $h(n)=h\otimes t^n\in\hh$, and
\be
Y(\iota(e_{\a}),x)=E^{-}(-\a,x)E^{+}(-\a,x)e_{\a}x^{\a},
\ee
where 
\be\label{exp}
E^{\pm}(-\a,x)=\text{exp}\left(\sum_{n\in\pm\ZZ_+}\frac{-\a(n)}{n}x^{-n}\right)\in (\text{End }V_L)[[x,x^{-1}]]
\ee
for $\a\in\h$ and where $\iota$ is the canonical linear isomorphism from $\mathbb{C}[L]$ to the induced $\hat{L}$-module $\mathbb{C}\{L\}$. We will often use $e^{\alpha}$ and $\iota(e_{\alpha})$ interchangably. In general for $v=h_1(n_1)\cdots h_r(n_r)\iota(e_{\a})$ we have
\be
Y(v,x)=\nop{\left(\frac{1}{(n_1-1)!}\left(\frac{d}{dx}\right)^{n_1-1}h_1(x)\right)\cdots\left(\frac{1}{(n_r-1)!}\left(\frac{d}{dx}\right)^{n_r-1}h_r(x)\right)Y(\iota(e_{\a}),x)},
\ee
where by $\nop{\cdot}$ we mean normal ordering as described in \cite{LL}, \cite{FLM}, and others. $V_L$ has the structure of a vertex operator algebra, with vacuum vector
\be
\mathbb{1} = 1 \otimes 1
\ee
and conformal vector
\be
\omega = \frac{1}{2} \sum_{i=1}^D u^{(i)}(-1)^2 \mathbb{1},
\ee
where $\{u^{(1)}, \dots, u^{(D)} \}$ forms an orthonormal basis of $\mathfrak{h}$.
We lift the automorphism $\hnu$ of $\hat{L}$ to an automorphism of $V_L$, which we will also denote by $\hnu$. This is given by $\nu\otimes\hnu$ acting on $S\left(\hh^{-}\right)\otimes\CC[L]$ (cf. \cite{L1} and \cite{CalLM4}).

Now we construct the $\hnu$-twisted module $V_L^T$ of $V_L$ in our setting. We begin by decomposing
\be
\h_{(n)}=\{h\in\h|\nu h=\eta^{n}h\}\subset\h
\ee
for $n\in\ZZ$, and thus
\be
\h=\coprod_{n\in\ZZ/k\ZZ}\h_{(n)},
\ee
where we have identified $\h_{(n)}=\h_{(n\text{ mod }k)}$. Define the projection
\be
P_n:\h\to\h_{(n)}
\ee
and set $h_{(n)}=P_{(n\text{ mod }k)}$ for $h\in\h$ and $n\in\ZZ$. Observe that in general we may write
\be
\h_{(n)}=\{h+\eta^{-n}\nu h+\eta^{-2n}\nu^2 h+\cdots+\eta^{-(k-1)n}\nu^{k-1}h|h\in\h\}
\ee
and
\be 
h_{(n)}=\frac{1}{k}\left(h+\eta^{-n}\nu h+\eta^{-2n}\nu^2 h+\cdots+\eta^{-(k-1)n}\nu^{k-1}h\right),
\ee
for $h\in\h$. Now we move this into our particular setting. Notice that any $\a_1,\dots,\a_D$ can be written in the form
$\a^{(r)}_j$ for some $1\leq r\leq d$ and $1\leq j\leq l_r$. 
Of particular interest will be the zeroth component which we may write
\be
\h_{(0)}=\coprod_{r=1}^{d}\CC\left(\a_1^{(r)}+\cdots+\a_{l_r}^{(r)}\right)
\ee
and similarly the zeroth projection
\be
(\a_j^{(r)})_{(0)}=\frac{1}{l_r}\left(\a_1^{(r)}+\cdots+\a_{l_r}^{(r)}\right),
\ee
for $1\leq r\leq d$ and $1\leq j\leq l_r$.

Consider the $\nu$-twisted affine Lie algebra associated with $\h$ and $\<\cdot,\cdot\>$ (which $\h$ inherits from the bilinear form on the lattice $L$).
\be
\hh[\nu]=\coprod_{n\in\frac{1}{k}\ZZ}\h_{(kn)}\otimes t^n\oplus\CC\mathbf{k}
\ee
with 
\be
[\a\otimes t^m,\b\otimes t^n]=\<\a,\b\>m\delta_{m+n,0}\mathbf{k}
\ee
where $\a\in\h_{(km)}$, $\b\in\h_{(kn)}$, $m,n\in\frac{1}{k}\ZZ$, and $\mathbf{k}\in Z\left(\hh[\nu]\right)$, the center. Observe that this algebra is $\frac{1}{k}\ZZ$-graded by weights with
\be
\wt\left(\a\otimes t^m\right)=-m, \text{ and } \wt(\mathbf{k})=0,
\ee
where $m\in\frac{1}{k}\ZZ$ and $\a\in\h_{(mk)}$. Following \cite{CalLM4} we consider the subalgebras

\begin{equation}
\hh[\nu]^{+}=\coprod_{n>0}\h_{(kn)}\otimes t^n,~~\hh[\nu]^{-}=\coprod_{n<0}\h_{(kn)}\otimes t^n
\end{equation}
as well as
\begin{equation}
\hh[\nu]_{\frac{1}{k}\mathbb{Z}}=\hh[\nu]^{+}\oplus\hh[\nu]^{-}\oplus\mathbb{C}\mathbf{k}.
\end{equation}
We now form the induced module
\begin{equation}
S[\nu]=U\left(\hh[\nu]\right)\otimes_{U\left(\coprod_{n\geq0}\h_{(kn)}\otimes t^n\oplus\mathbb{C}\mathbf{k}\right)}\mathbb{C},
\end{equation}
where $\coprod_{n\geq0}\h_{(kn)}$ acts trivially on $\mathbb{C}$ and $\mathbf{k}$ acts as $1$. We will make use of the fact that this is linearly isomorphic to $S\left(\hh[\nu]^{-}\right)$ and give it the natural $\QQ$-grading such that
\begin{equation}\begin{aligned}
\wt~1=&\frac{1}{4k^2}\sum_{j=1}^{k-1}j(k-j)\text{dim }\h_{(j)}
\end{aligned}\end{equation}

We now move towards constructing the $\hnu$-twisted $V_L$ module $V_L^T$. As in \cite{L1} and \cite{CalLM4}, set
\be 
N=\left(1-P_0\right)\h\cap L=\{\a\in L|\<\a,\h_{(0)}\>=0\},
\ee
\be
M=(1-\nu)L \subset N,
\ee
and
\be
R = \{\alpha \in N | C_N(\alpha,N)=1\},
\ee
where
\be
C_N(\alpha,\beta) = \eta^{\sum_{j=0}^{k-1}\left< j \nu^j \alpha, \beta\right> },
\ee
and noting that $M \subset R$. If $Q$ is a subgroup of $L$, we denote by $\hat{Q}$ the subgroup of $L_\nu$ obtained by pulling back $Q$. By Proposition 6.1 in \cite{L1}, there is a unique homomorphism $\tau: \hat{M} \rightarrow \mathbb{C}$ satisfying
\be
\tau(\eta) = \eta \mbox{ and } \tau(a\hat{\nu}a^{-1}) = \eta^{-\sum_{j=0}^{k-1} \left< \nu^j \overline{a},\overline{a}\right>} 
\ee
for $a \in \hat{L}_{\nu}$.
We recall the following proposition from \cite{L1}:
\begin{prop}[Proposition 6.2 of \cite{L1}]
There are exactly $|R/M|$ extentions of $\tau$ to a homomorphism $\chi:\hat{R} \rightarrow \mathbb{C}$. For each $\chi$, there is a unique (up to equivalence) irreducible $\hat{N}$-module on which $\hat{R}$ acts according to $\tau$, and every irreducible $\hat{N}$-modules on which $\hat{M}$ acts according to $\tau$ is equivalent to one of these. Every such module has dimension $|N/R|^2$.
\end{prop}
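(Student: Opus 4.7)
The plan is to handle this in three stages: extend $\tau$ from $\hat{M}$ to $\hat{R}$ and count the extensions; construct an irreducible $\hat{N}$-module $T_\chi$ attached to each extension $\chi$; and show every candidate module is isomorphic to some $T_\chi$. The guiding structural observation is that $R$ is by definition the radical of $C_N$ on $N$, and that for $\a\in N$ the vanishing $\<\a,\h_{(0)}\>=0$ forces $\sum_{j=0}^{k-1}\<\nu^j\a,\b\>=0$, so the sign factors in $C_0$ drop out and the restriction of $C_0$ to $\hat N\times\hat N$ reduces to $C_N$. Consequently $\hat R$ is central in $\hat N$, and since $N/M$ is finite (both $M$ and $N$ have full rank in the same $\QQ$-subspace $(1-P_0)\h$), $\hat{N}/\hat{R}$ becomes a finite Heisenberg group with symplectic form induced by $C_N$.

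\emph{Counting extensions.} The projection $\hat{L}_\nu\to L$ identifies $\hat{R}/\hat{M}$ with the finite abelian group $R/M$. Since $\CC^\times$ is divisible (hence injective as an abelian group), any character $\tau\colon\hat{M}\to\CC^\times$ extends to $\hat R$, and the set of extensions is a torsor over $\mathrm{Hom}(R/M,\CC^\times)$, giving exactly $|R/M|$ of them. Before invoking this, one must verify that the prescribed formula genuinely defines a character on $\hat{M}$: this reduces to an algebraic identity checking the values on generators and relations of $\hat{M}$, using the explicit cocycle $\epsilon_{C_0}$ and the orbit structure of $\nu$.

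\emph{Construction of $T_\chi$.} Because $C_N$ is non-degenerate on $N/R$, choose a subgroup $R\subset P\subset N$ with $P/R$ a Lagrangian inside $N/R$; then $\hat{P}$ is abelian and, again by divisibility, $\chi$ extends to a character $\psi\colon\hat P\to\CC^\times$. Form the induced module
\begin{equation*}
T_\chi = \CC[\hat{N}]\otimes_{\CC[\hat{P}]}\CC_\psi.
\end{equation*}
A standard Mackey-style argument shows $T_\chi$ is irreducible: any nonzero $\hat{N}$-submodule contains a $\hat{P}$-eigenvector, and maximality of $P/R$ together with non-degeneracy of $C_N$ on $N/R$ forces its $\hat{N}$-translates to span all of $T_\chi$. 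A separate intertwiner calculation shows the isomorphism class of $T_\chi$ depends only on $\chi$, not on the choices of $P$ and $\psi$, and the claimed dimension is then read off from the index $[\hat{N}:\hat{P}]$.

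\emph{Uniqueness and main obstacle.} Let $W$ be any irreducible $\hat{N}$-module on which $\hat{M}$ acts by $\tau$. Since $\hat{N}/\hat{M}$ is finite, $W$ is finite-dimensional, and because $\hat{R}$ is central, Schur's lemma forces $\hat{R}$ to act on $W$ by a character $\chi$ extending $\tau$. Picking a nonzero $\hat{P}$-eigenvector $w\in W$ with eigencharacter $\psi'$ (possible since $\hat{P}$ is abelian), Frobenius reciprocity produces a nonzero $\hat{N}$-intertwiner $\mathrm{Ind}_{\hat{P}}^{\hat{N}}\CC_{\psi'}\to W$, which must be an isomorphism by irreducibility of both sides. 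The delicate point throughout is the Stone--von Neumann-type argument showing independence of the induced module from the auxiliary choices of Lagrangian and extension: one must build explicit intertwiners between induced modules coming from different $P$ and $\psi$, carefully tracking the twisted cocycle $\epsilon_{C_0}$ inherited from $\hat{L}_\nu$. This is the main obstacle; every other step is an essentially formal transcription of the representation theory of finite Heisenberg groups into the present twisted lattice setting.
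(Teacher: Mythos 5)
Your outline is a correct and essentially self-contained instance of the standard finite Heisenberg group (Stone--von Neumann) argument, but note that the paper under review does not prove this statement at all: it is quoted verbatim as Proposition 6.2 of Lepowsky's reference [L1], so there is no ``paper's own proof'' to compare against. Taking the proposal on its own terms, the structural observations are right: on $N$ the sign $(-1)^{\sum_j\langle\nu^j\alpha,\beta\rangle}$ in $C_0$ is trivial since $\sum_j\langle\nu^j\alpha,\beta\rangle=k\langle P_0\alpha,\beta\rangle=0$ for $\alpha\in N$, so the commutator on $\hat N$ is governed by $C_N$; hence $\hat R$ is central and abelian, $\hat R/\hat M\cong R/M$, and the divisibility of $\CC^\times$ gives exactly $|R/M|$ extensions of $\tau$. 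The induction from a Lagrangian $P/R\subset N/R$, the Mackey irreducibility argument, the Schur-plus-Frobenius-reciprocity uniqueness argument, and the independence-of-polarization step are all the right moves.

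Two remarks. First, your sentence ``Since $\hat{N}/\hat{M}$ is finite, $W$ is finite-dimensional'' is stated before you invoke Schur's lemma, but as written the implication is incomplete; finite-dimensionality really comes from the fact that $\hat M\subset\hat R$ is \emph{central} and acts on $W$ by the character $\tau$, so that $W$ factors through the finite-dimensional twisted group algebra $\CC^c[N/M]$. You have all the pieces, just in a slightly misleading order.

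Second, and more substantively: your construction gives $\dim T_\chi=[\hat N:\hat P]=|N/P|$, and because $P/R$ is Lagrangian in $(N/R,C_N)$ we have $|N/P|=|P/R|=|N/R|^{1/2}$. This is the correct dimension for a Heisenberg representation, and it \emph{disagrees} with the ``$|N/R|^2$'' appearing in the statement as transcribed in this paper. The exponent $2$ is a typo for $1/2$ (compare Lepowsky's Proposition 6.2, and note the sanity check $N/R\cong(\ZZ/2)^2$ giving the unique $2$-dimensional irreducible of the order-$8$ Heisenberg group, not a $16$-dimensional one). The typo is immaterial to the rest of the paper since the authors immediately restrict to the case $N=M=R$ where both expressions equal $1$, but you should be aware that your (correct) argument does not produce the dimension literally claimed.
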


We now present a technical result involving the structure of the lattice that ensures that $N=M=R$. We underscore the fact that the mild  assumptions on the lattice in this result are essential a twisted version of the assumption made in \cite{P} involving the nonsingularity of the Gram matrix.

\begin{prop}
If the twisted Gram matrix associated to the lattice $L$ and its isometry $\nu$ is invertible, that is 
\be
A_L^{\nu}=\left(\left<\a[i],\a[j]\right>\right),\ee
is invertible, then $N=M=R$.
\end{prop}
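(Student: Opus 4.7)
Since $M \subseteq R \subseteq N$ is already in hand (the inclusion $M \subseteq R$ being cited above and $R \subseteq N$ being part of the definition of $R$), the proposition reduces to proving $N \subseteq M$. The plan is to fix $\alpha \in N$, use the invertibility hypothesis to translate the defining condition of $N$ into a concrete coordinate condition on the expansion of $\alpha$ in the basis $\{\alpha_j^{(r)}\}$, and then explicitly exhibit a preimage of $\alpha$ under $1-\nu$ inside $L$.

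The key use of invertibility is as follows. The orbit sums $\alpha[1],\ldots,\alpha[d]$ form a $\mathbb{Q}$-basis of $\mathfrak{h}_{(0)}$, since they have pairwise disjoint support in $\{\alpha_i\}_{i=1}^D$ and there are $d = \dim\mathfrak{h}_{(0)}$ of them. Consequently $A_L^\nu$ is precisely the Gram matrix of $\langle\cdot,\cdot\rangle$ restricted to $\mathfrak{h}_{(0)}$ in this basis, so invertibility of $A_L^\nu$ is equivalent to non-degeneracy of the form on $\mathfrak{h}_{(0)}$. Now given $\alpha \in N$, decompose $\alpha = \alpha_{(0)} + (\alpha - \alpha_{(0)})$ with $\alpha_{(0)} \in \mathfrak{h}_{(0)}$ and $\alpha - \alpha_{(0)} \in (1-P_0)\mathfrak{h}$. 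The identity $\langle u,v\rangle = \eta^n\langle u,v\rangle$ for $u \in \mathfrak{h}_{(0)}$, $v \in \mathfrak{h}_{(n)}$, $n \not\equiv 0 \pmod k$ yields $\mathfrak{h}_{(0)} \perp (1-P_0)\mathfrak{h}$, so $\langle \alpha, \mathfrak{h}_{(0)}\rangle = 0$ collapses to $\langle \alpha_{(0)}, \mathfrak{h}_{(0)}\rangle = 0$, and invertibility of $A_L^\nu$ forces $\alpha_{(0)} = 0$.

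With this reduction in hand, write $\alpha = \sum_{r=1}^d \sum_{j=1}^{l_r} c_{r,j}\alpha_j^{(r)}$ with $c_{r,j} \in \mathbb{Z}$. The formula $(\alpha_j^{(r)})_{(0)} = \frac{1}{l_r}\alpha[r]$ established earlier gives $\alpha_{(0)} = \sum_r \frac{1}{l_r}\bigl(\sum_j c_{r,j}\bigr)\alpha[r]$, and linear independence of the $\alpha[r]$ converts $\alpha_{(0)} = 0$ into the combinatorial condition $\sum_{j=1}^{l_r} c_{r,j} = 0$ for each $r$. Within the $r$-th cycle, set $b^{(r)}_1 = 0$ and $b^{(r)}_s = c_{r,2} + \cdots + c_{r,s}$ for $s \geq 2$, and put $\beta^{(r)} = \sum_{s=1}^{l_r} b^{(r)}_s \alpha_s^{(r)}$. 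Using $\nu \alpha_s^{(r)} = \alpha_{s+1}^{(r)}$ (indices cyclic modulo $l_r$), a short telescoping computation gives $(1-\nu)\beta^{(r)} = \sum_s (b^{(r)}_s - b^{(r)}_{s-1})\alpha_s^{(r)}$; for $s \geq 2$ the coefficient is $c_{r,s}$ by construction, and for $s = 1$ it equals $-b^{(r)}_{l_r} = -(c_{r,2} + \cdots + c_{r,l_r}) = c_{r,1}$ precisely by the vanishing of the row sum. Hence $\beta = \sum_r \beta^{(r)} \in L$ satisfies $(1-\nu)\beta = \alpha$, so $\alpha \in M$. The only step that genuinely invokes the hypothesis is the passage $\langle \alpha, \mathfrak{h}_{(0)}\rangle = 0 \Rightarrow \alpha_{(0)} = 0$, and this is the main obstacle; everything after is a routine telescoping inversion of $1-\nu$ on each cyclic block, enabled by the vanishing row sum.
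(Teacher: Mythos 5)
Your proof is correct, and it is logically tighter than the paper's in one respect while taking a slightly different path to the core estimate. The paper proves $N \subseteq M$ by a direct manipulation of the conditions $\langle \alpha, \alpha[j]\rangle = 0$: using that $\nu$ is an isometry fixing $\alpha[j]$, it converts each term $\langle \alpha_s^{(i)}, \alpha[j]\rangle$ into $\frac{1}{\ell_i}\langle\alpha[i],\alpha[j]\rangle$ and then reads off the vanishing row sums from the invertibility of $A_L^\nu$. You instead pass through the eigenspace projection $P_0$: you observe that $\mathfrak{h}_{(0)} \perp (1-P_0)\mathfrak{h}$, so $\alpha \in N$ forces $\alpha_{(0)}$ into the radical of the form restricted to $\mathfrak{h}_{(0)}$, and invertibility of $A_L^\nu$ (which is exactly the Gram matrix of the form on $\mathfrak{h}_{(0)}$ in the orbit-sum basis) kills $\alpha_{(0)}$. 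This is the same condition, but the conceptual route through the projection makes transparent why the orbit sums are the right test vectors. The more substantial difference is in what happens afterward. The paper goes on to prove $N = R$ by a separate, somewhat lengthy computation showing $\alpha_s^{(r)} - \alpha_{s+1}^{(r)} \in R$; you instead invoke the chain $M \subseteq R \subseteq N$, with $M \subseteq R$ being a fact the paper itself states just before the proposition (quoting Lepowsky), so that $N \subseteq M$ alone closes the loop. This makes the paper's second half redundant under its own stated facts, and your version is correspondingly more economical. Your explicit telescoping construction of a $\beta$ with $(1-\nu)\beta = \alpha$ is also a touch more concrete than the paper's appeal to the standard fact that vanishing-row-sum vectors lie in the $\mathbb{Z}$-span of consecutive differences, though the two are equivalent.
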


\begin{proof}
By the definition of these subsets we have $M\subset N$ and $R\subset N$. As such, we will focus on the opposite containment starting with the subset $M$. Suppose $\a\in N$ and write 
$$\a=\sum_{r=1}^d\sum_{s=1}^{\ell_r}m_s^{(r)}\a_s^{(r)}.$$
Now the condition that $\a\in N$ becomes 
\be\begin{aligned}
0=\left<\a,\a[j]\right>&=\sum_{i=1}^d\sum_{s=1}^{\ell_i}m_s^{(i)}\left<\a_s^{(i)},\a[j]\right>\\&=\sum_{i=1}^d\sum_{s=1}^{\ell_i}m_s^{(i)}\left<\nu^{\ell_i-s+1}\a_s^{(i)},\nu^{\ell_i-s+1}\a[j]\right>\\
&=\sum_{i=1}^d\sum_{s=1}^{\ell_i}m_s^{(i)}\left<\a_1^{(i)},\a[j]\right>\\
&=\sum_{i=1}^d\frac{1}{\ell_i}\sum_{s=1}^{\ell_i}m_s^{(i)}\left<\a[i],\a[j]\right>\\
&=\sum_{i=1}^d\left<\a[i],\a[j]\right>\left(\sum_{s=1}^{l_i}\frac{m_s^{(i)}}{\ell_i}\right),\end{aligned}\ee
for all $1\leq j \leq d$. It follows that from the invertibility of $A_L^{\nu}$ that for all $1\leq i \leq d$ we have 
$$\sum_{s=1}^{l_i}m_s^{(i)}=0.$$
It follows that $\a$ is an integer linear combination of terms of the form $\a^{(r)}_{s}-\a^{(r)}_{s+1}=(1-\nu)\a^{(r)}_{s}$ for $1\leq r\leq d$ and $1\leq s\leq \ell_r-1$, and thus $\a\in M$. 

Next we show that $N=R$. By the above argument this will follow if we show that $\a^{(r)}_{s}-\a^{(r)}_{s+1}\in R$ for $1\leq r\leq d$ and $1\leq s\leq \ell_r-1$. Since $k=2\text{lcm}\{\ell_1,\cdots,\ell_d\}$, we can write $k=m\ell_r$ for some $m\in \mathbb{N}$. We begin by observing the behavior of the following two collapsing sums
\be\label{sum1}
\sum_{j=1}^{\ell_r-1}\left<\nu^j\left(\a_s^{(r)}-\a_{s+1}^{(r)}\right),N\right>=0\ee
and 
\be\label{sum2}\begin{aligned}
\sum_{j=1}^{\ell_r-1}j\left<\nu^j\left(\a_s^{(r)}-\a_{s+1}^{(r)}\right),N\right>&=\sum_{j=1}^{\ell_r-1}j\left<\a_{s+j(\text{mod }\ell_r)}^{(r)}-\a_{s+1+j(\text{mod }\ell_r)}^{(r)},N\right>\\&=-\ell_r\left<\a_s^{(r)},N\right>=.\end{aligned}\ee
Now we observe that 
\be\begin{aligned}
\sum_{j=0}^k j\left<\nu^j\left(\a_s^{(r)}-\a_{s+1}^{(r)}\right),N\right>&=\sum_{u=1}^{m-1}\sum_{j=u\ell_r}^{(u+1)\ell_r-1}j\left<\nu^j\left(\a_s^{(r)}-\a_{s+1}^{(r)}\right),N\right>\\
&=\sum_{u=0}^{m-1}\sum_{j=0}^{\ell_r-1}(j+u\ell_r)\left<\nu^{j+u\ell_r}\left(\a_s^{(r)}-\a_{s+1}^{(r)}\right),N\right>\\
&=\sum_{u=0}^{m-1}\sum_{j=0}^{\ell_r-1}(j+u\ell_r)\left<\nu^{j}\left(\a_s^{(r)}-\a_{s+1}^{(r)}\right),N\right>\\
&=-k\left<\a_s^{(r)},N\right>,
\end{aligned}\ee
where in the last step we have used \eqref{sum1} and \eqref{sum2}. This implies that for all $\a\in N$, $C_N(\a,N)=1$ and thus $N=R$.
\end{proof}

In light of this proposition, we assume that $N=M=R$ throughout this work.
\begin{rem}
The assumption that $N=M=R$ is quite natural, and the cases studied in \cite{CalLM4}, \cite{CalMPe}, and \cite{PS1}-\cite{PS2} all satisfy this property. At the end of this work, we present interesting examples in our present setting which also meet this requirement.
\end{rem}
We let $T = \mathbb{C}_\tau$ be the unique one-dimensional irreducible $\hat{N}$-module with character $\tau$. We skip some details of the construction (found in \cite{CalLM4} and \cite{L1}) but recall that
\be
V_L^T\cong S\left(\hh[\nu]^{-}\right)\otimes U_T
\ee
where $U_T\cong\CC[\hat{L}_\nu]\otimes_{\CC[\hat{N}]} T \cong \CC[L/N]$, on which $\hat{L}_{\nu}$, $\hh[\nu]_{\frac{1}{k}\ZZ}$, $\h_{(0)}$, and $x^h$ for $h\in\h$ act as described in \cite{CalLM4}. We may therefore write
\be
V_L^T \cong S\left(\hh[\nu]^{-}\right)\otimes \CC[L/N].
\ee

We now recall the necessary twisted vertex operators for use in our purposes - examining the principal subspace of $V_L^T$.
As in \cite{L1} and \cite{CalLM4}, we define the formal Laurent series:
\begin{equation}
E^{\pm}(-\alpha,x)=\text{exp}\left(\sum_{n\in\pm\frac{1}{k}\mathbb{Z}_{+}}\frac{-\alpha_{(kn)}(n)}{n}x^{-n}\right).
\end{equation}
 We begin by recalling the following relationship between the series $E^{\pm}(\a,x)\in\left(\text{End }V_L\right)[[x,x^{-1}]]$ (see \cite{L1}):
\be
E^{+}(\a,x_1)E^{-}(\b,x_2)=E^{-}(\b,x_2)E^{+}(\a,x_1)\prod_{j\in\ZZ/k\ZZ}\left(1-\eta^{j}\frac{x_2^{1/k}}{x_1^{1/k}}\right)^{\<\nu^{j}\a,\b\>}
\ee
for $\a,\b\in L$. Let
\be\label{constant}
\sigma(\a)=\prod_{0<j<\frac{k}{2}}(1-\eta^{-j})^{\<\nu^{j}\a,\a\>}2^{\<\nu^{k/2}\a,\a\>/2}.
\ee

Following \cite{CalLM4} and using (\ref{exp}), for $a \in \hat{L}$ define the $\hat{\nu}$-twisted vertex operator
\begin{equation} 
Y^{\hat{\nu}}(\iota(a), x)= k^{-\left< \overline{a},
\overline{a}\right> /2} 
\sigma(\overline{a}) E^{-} (-\overline{a}, x) E^{+} (-\overline{a}, x)
a x^{\overline{a}_{(0)}+\left<
\overline{a}_{(0)},
\overline{a}_{(0)}\right> /2-\left< \overline{a},
\overline{a}\right> /2}.
\end{equation}
In particular, for $\alpha \in L$, we have
\begin{equation} 
Y^{\hat{\nu}}(\iota(e_\alpha), x)= k^{-\left< \a,
\a\right> /2} 
\sigma(\a) E^{-} (-\a, x) E^{+} (-\a, x)
e_\alpha x^{\a_{(0)}+\left<
\a_{(0)},
\a_{(0)}\right> /2-\left< \a,
\a\right> /2},
\end{equation}
for $\a\in\h$. For $m\in\frac{1}{k}\ZZ$ and $\a\in L$ define the component operators $\left(e^{\a}\right)^{\hnu}_m$ by 
\begin{equation}\label{VertexOperators}
Y^{\hnu}(\iota(e_{\alpha}),x)=\sum_{m\in\frac{1}{k}\ZZ}\left(e^{\a}\right)^{\hnu}_mx^{-m-\frac{\left<\alpha,\alpha\right>}{2}}.
\end{equation}
As in \cite{CalLM4}, we also extend our twisted vertex operators to $V_L$, so that
\be
Y^{\hnu}(v,x)=\sum_{m\in\frac{1}{k}\ZZ}v_m x^{-m-1}
\ee
(we refer the reader to \cite{DL1} and \cite{CalLM4} for details regarding the construction of $Y^{\hnu}$).
Using the twisted vertex operators $Y^{\hnu}$, we have that $V_L^T$ is a $\hnu$-twisted module for $V_L$, and in particular it satisfies the twisted Jacobi identity:
\begin{multline} \label{Jacobi}
x^{-1}_0\delta\left(\frac{x_1-x_2}{x_0}\right)
Y^{\hat{\nu}}(u,x_1)Y^{\hat{\nu}}(v,x_2)-x^{-1}_0
\delta\left(\frac{x_2-x_1}{-x_0}\right) Y^{\hat{\nu}}
(v,x_2)Y^{\hat{\nu}}_T (u,x_1)\\  
= x_2^{-1}\frac{1}{k}\sum_{j\in \Z /k \Z}
\delta\left(\eta^j\frac{(x_1-x_0)^{1/k}}{x_2^{1/k}}\right)Y^{\hat{\nu}} 
(Y(\hat{\nu}^j
u,x_0)v,x_2) 
\end{multline}
for $u, v \in V_L$.

\section{Principal Subspaces}
In this section and for the remainder of this work, we assume that $\left<\a_i,\a_j\right>\geq 0$ for all $1\leq i,j\leq D$. For the lattice vertex operator algebra $V_{L}$ we define the principal subalgebra of $V_L$ corresponding to the choice of $\ZZ$-basis $\mathcal{B}=\{\a_1,\dots,\a_D\}$ of $L$ by 
\be 
W_L(\mathcal{B})=\left<e^{\a_1},\dots, e^{\a_D}\right>,\ee
where by $\left<e^{\a_1},\dots, e^{\a_D}\right>$, we mean the smallest vertex subalgebra of $V_L$ containing all generating vectors. As $\mathcal{B}$ will be fixed we write $W_L=W_L(\mathcal{B})$. We recall the following results that describe $W_L$:

\begin{thm}\label{untwistedpresentation} \cite{P}
 Let $V_L$ be the lattice vertex algebra constructed from a rank $n$ integral lattice $L$ with the condition that $\left<\alpha_i,\alpha_j\right>\geq0$ and $\left<\a_i,\a_i\right>\in 2\mathbb{Z}$. Let 
\begin{equation*}
U_L:=\mathbb{C}[x_{i}(m)|1\leq i \leq n,m<0]
\end{equation*}
 Consider an ideal in $U_L$ 
 \begin{equation*}
I_L=\sum_{i=1}^{n}\sum_{j=1}^{n}\sum_{k=1}^{\left<\alpha_i,\alpha_j\right>}\sum_{l\leq 0}{U_L\cdot R^{(i,j)}_{k,l}},
\end{equation*}where the $R$'s are certain quadratic expressions in $U_L$.  Then we have
\begin{equation*}
W_L\cong  U_L/I_L.
\end{equation*}
\end{thm}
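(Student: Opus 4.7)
The plan is to adopt the standard presentation-theorem strategy of the Feigin--Stoyanovsky and Calinescu--Lepowsky--Milas program (cf.\ \cite{CLM1}, \cite{CalLM1}), specialized to the commutative principal-subspace setting. First I would define a linear map $\phi:U_L\to W_L$ sending a monomial $x_{i_1}(m_1)\cdots x_{i_r}(m_r)$ to the vector $(e^{\alpha_{i_1}})_{m_1}\cdots(e^{\alpha_{i_r}})_{m_r}\mathbb{1}$, where $(e^{\alpha_i})_m$ is the appropriately indexed Fourier mode of $Y(\iota(e^{\alpha_i}),x)$. The hypothesis $\left<\alpha_i,\alpha_j\right>\geq 0$ ensures that these component operators close on iterated applications to the vacuum, and that the modes with $m<0$ suffice to reach every vector of $W_L$. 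Surjectivity of $\phi$ then follows from the definition of $W_L$ as the smallest vertex subalgebra containing the $\iota(e^{\alpha_i})$.

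Second, I would verify that the relations $R^{(i,j)}_{k,l}$ vanish in $W_L$. These should arise from the standard truncation identity that the series $(x_1-x_2)^{\left<\alpha_i,\alpha_j\right>}Y(\iota(e^{\alpha_i}),x_1)Y(\iota(e^{\alpha_j}),x_2)$ contains no negative powers of $(x_1-x_2)$, a direct consequence of the lattice vertex-operator commutation formulas. Extracting an appropriate coefficient and applying to the vacuum produces precisely the quadratic relations $R^{(i,j)}_{k,l}$, so $\phi$ descends to a surjection $\bar\phi:U_L/I_L\twoheadrightarrow W_L$.

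Injectivity of $\bar\phi$ is the main obstacle. My strategy would be twofold. On the $U_L/I_L$ side, produce a combinatorial spanning set consisting of ordered monomials that satisfy difference-type conditions dictated by the relations $R^{(i,j)}_{k,l}$; this requires a straightening argument using each $R^{(i,j)}_{k,l}$ to strictly decrease a carefully chosen well-ordering on monomials (for example, a reverse-lexicographic ordering weighted by both root index and mode index). On the $W_L$ side, exhibit a linearly independent family indexed by the same data. The cleanest tool for independence is a family of ``shifting'' intertwining operators $\mathcal{Y}(e^{\alpha},x):W_L\to W_{L+\alpha}$ in the spirit of Calinescu--Lepowsky--Milas, injective on each charge sector, which allows an induction on the multicharge grading that reduces the problem to rank-one sublattices where independence can be checked directly inside the Fock realization $S(\hh^{-})\otimes\CC[L]$.

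The delicate point I expect to confront is making the straightening on the $U_L/I_L$ side terminate with exactly the combinatorial basis that the $W_L$-side independence argument produces, with no slack. This requires a careful choice of monomial ordering together with close accounting of the leading terms of each $R^{(i,j)}_{k,l}$; once the two bases match, injectivity of $\bar\phi$ is immediate from surjectivity.
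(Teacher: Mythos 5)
Note first that the paper does not itself prove this theorem; it is cited from \cite{P}. The twisted analogue proved later in this paper (Theorem \ref{presentation}) reproduces the strategy of \cite{P}, so a comparison with that is still informative. Your first two steps---the surjection $\phi:U_L\to W_L$ and the verification that each $R^{(i,j)}_{k,l}$, coming from $(e^{\alpha_i})_{-m}e^{\alpha_j}=0$ for $1\leq m\leq\left<\alpha_i,\alpha_j\right>$, vanishes in $W_L$---match the paper's setup.

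For injectivity you take a genuinely different route. You propose a Georgiev/Feigin--Stoyanovsky style ``two bases'' argument: straighten $U_L/I_L$ to a combinatorial spanning set of ordered monomials satisfying difference conditions, prove linear independence of its image in $W_L$ via intertwiners and induction on charge, and conclude by matching the two families. The proof in \cite{P}, mirrored by Theorem \ref{presentation} in the twisted case, never constructs a basis. It argues by minimal counterexample: take $a\in\ker\phi\setminus I_L$ homogeneous of smallest total charge and, among those, smallest conformal weight; the Li $\Delta$-map (realized on $U_L$ by a shift $\tau_{\gamma_i}$) strictly lowers weight, forcing $a\in I_L+U_L\,x_{\alpha_i}(-1)$; the injective translation $e_{\alpha_i}$ then peels off one $x_{\alpha_i}(-1)$ and produces an element of strictly smaller charge in $\ker\phi\setminus I_L$, a contradiction. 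The technical burden is carried by lemmas showing that $\tau_{\gamma_i}$ and the companion map $\psi_{\gamma_i}$ carry $I_L$ (and $I_L+U_Lx_{\alpha_i}(-1)$) appropriately into $I_L$. That argument avoids the straightening and leading-term bookkeeping you correctly flag as the delicate point of your approach, and it transfers uniformly across lattices with nonnegative Gram matrix. Your route, if carried out, would additionally produce explicit monomial bases of $W_L$, which the \cite{P} argument does not directly deliver.
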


 The aim of this work is to extend Theorem \ref{untwistedpresentation} to our twisted setting. We introduce the notion of the principal subspace of a twisted module $V_L^T$ in general. Let $1_T\in V_L^T$ be a highest weight vector of $V_L^T$ and define the principal subspace of $V_L^T$ corresponding to $\mathcal{B}$ by
\be
W_L^T(\mathcal{B})=W_L(\mathcal{B})\cdot 1_T,\ee
again, as the $\ZZ$-basis $\mathcal{B}$ will be fixed we write $W_L^T=W_L^T(\mathcal{B})$.~\\

\subsection{Grading of $W_L^T$}~

Recall from \cite{L1} and\cite{CalLM4} that $V_L^T$, and thus $W_L^T$ is $\frac{1}{k}\ZZ$-graded by the eigenvalues of $L^{\hnu}(0)$, with
\be
L^{\hnu}(0)1_T=\frac{1}{4k^2}\sum_{j=1}^{k-1}j(k-j)\text{dim }\h_{(j)}1_T\ee
and so we write
\be
\text{wt}(1_T)=\frac{1}{4k^2}\sum_{j=1}^{k-1}j(k-j)\text{dim }\h_{(j)}.\ee
We now determine the $L^{\hnu}(0)$-weight of an arbitrary monomial in $W_L^T$, which takes the form
\be\label{generalmonomial}
\ea{i_1}{m_1}\cdots\ea{i_r}{m_r}\cdot 1_T\in W_L^T
\ee
where $1\leq i_j\leq D$ and $m_j\in\frac{1}{k}\ZZ$. It follows from Proposition 6.3 of \cite{DL1} that 
\begin{multline}\label{YomegaCommutator}
[Y^{\hnu}(\omega,x_1),Y^{\hnu}(\iota(e_\alpha),x_2)]\\
 = x_2^{-1}\frac{d}{dx_2}Y^{\hnu}(\iota(e_\alpha),x_2)\delta(x_1/x_2) - \frac{1}{2}\left< \alpha,\alpha \right> x_2^{-1} Y^{\hnu}(\iota(e_\alpha),x_2)\frac{d}{dx_1}\delta(x_1/x_2),
\end{multline}
where $\delta(x) = \sum_{n \in \mathbb{Z}}x^n$. Taking $\mathrm{Res}_{x_2} \mathrm{Res}_{x_1}$ of $x_1 x_2^m$ times of both sides of (\ref{YomegaCommutator}) immediately gives
\begin{equation}
[L^{\hnu}(0),\ea{}{m}]=-m\ea{}{m}.
\end{equation}
Applying this to (\ref{generalmonomial}) we have 
\be\begin{aligned}
L^{\hnu}(0)&\ea{i_1}{m_1}\cdots\ea{i_r}{m_r}\cdot 1_T\\&=\left(-(m_1+\cdots+m_r)+\text{wt}(1_T)\right)\ea{i_1}{m_1}\cdots\ea{i_r}{m_r}\cdot 1_T\end{aligned}\ee
which implies that 
\be
\text{wt}(\ea{i_1}{m_1}\cdots\ea{i_r}{m_r}\cdot 1_T)=-(m_1+\cdots+m_r)+\text{wt}(1_T).\ee

We also endow $W_L^T$ with $d$-additional gradings, which together will form a $d$-tuple known as the {\em charge}, and whose sum forms what we call the {\em total charge}. First, let
\be
\mathcal{B}^{\star}=\{\lambda_i\in\mathbb{Q}\otimes_{\Z}L|1\leq i\leq D \text{ and } \left<\a_i,\lambda_j\right>=\delta_{i,j}\}\ee
and 
\be
\left(\lambda_{i}\right)_{(0)} =\frac{1}{l_r}(\lambda_{i}+\nu \lambda_i+\cdots+\nu^{l_r-1}\lambda_i),\ee
where $\a_i=\a^{(r)}_j$ for some $1\leq r\leq d$ and $1\leq j\leq l_r$.  For notational convenience, we set $\a^{(j)}:=\a^{(j)}_1$ (and similarly define $\lambda^{(j)}$) for each $j=1,\dots,d$, a fixed representative from the $\nu$-orbit of $\a^{(j)}_1$.
Let $\lambda^{(r)}$ be the element of $\mathcal{B}^{\star}$ corresponding to $\a^{(r)}$ and set
\be
\mathfrak{o}_r=\{\lambda\in\mathcal{B}^{\star}|\lambda=\nu^{m}\lambda^{(r)} \text{ for some } m\in\ZZ\}\ee
and notice that these sets partition $\mathcal{B}^{\star}$ as $r$ runs from 1 to $d$. We now consider the orbit sum
\be 
\lambda[r]=\sum_{\lambda\in\mathfrak{o}_r}(\lambda)_{(0)},\ee
and define the charge grading 
\be\begin{aligned}
\text{ch}(\ea{}{m})&=\left(\left<\a,\lambda[1]\right>,\dots,\left<\a,\lambda[d]\right>\right)\\
&=\left(l_1\left<\a,\left(\lambda^{(1)}\right)_{(0)}\right>,\dots,l_d\left<\a,\left(\lambda^{(d)}\right)_{(0)}\right>\right) \in \mathbb{Z}^d.\end{aligned}\ee
\begin{rem}
The charge grading we use here is analogous to the charge grading used in \cite{CLM1}-\cite{CLM2} and \cite{CalLM1}-\cite{CalLM3} in the untwisted setting, and to the charge gradings used in \cite{CalLM4}, \cite{CalMPe}, and \cite{PS1}-\cite{PS2}. The multiplication by $l_i$ in each component ensures that the $(\lambda^{(i)})_{(0)}$-charge of each element is an integer.
\end{rem}

Now that we have endowed $W_L^T$ with ($d+1$)-gradings, define the homogeneous graded components

\be
\left(W_L^T\right)_{(n,\mathbf{m})} =\{v\in W_L^T| \text{wt }v=n, \text{ch }v=\mathbf{m}\}.\ee
 and the multigraded dimension
\be
\chi(q;\mathbf{x})= \mathrm{tr}|_{W_L^T}x_1^{l_1(\lambda^{(1)})_{(0)}}\cdots x_d^{l_d(\lambda^{(d)})_{(0)}} q^{k\hat{L}^{\hnu}(0)},\ee
where $\mathbf{x}^{\mathbf{m}}=x_1^{m_1}\cdots x_d^{m_d}$. We also define the shifted multigraded dimensions
\be
\chi'(q;\mathbf{x})= q^{-\mathrm{wt}(1_T)}\chi(q;\mathbf{x}) = \sum_{\substack{n\in\mathbb{Z}_{\ge 0} \\ \mathbf{m}\in(\mathbb{Z}_{\ge 0})^d}} \text{dim }\left(W_L^T\right)_{(n,\mathbf{m})} q^n \mathbf{x}^{\mathbf{m}}\ee
so that the powers of $x_1,\dots ,x_d$ and $q$ are all integers.~\\

\subsection{The Structure of $W_L^T$}~

In this subsection we investigate the structure of $W_L^T$ that will be exploited in the following section to provide a presentation for $W_L^T$. From \cite{L1} and \cite{CalLM4}, we have that
\be\label{limitrelation}
Y^{\hnu}(\hnu^{r}v,x)=\lim_{x^{1/k}\to \eta^{-r}x^{1/k}}Y^{\hnu}(v, x)\ee
and so, to investigate operators of the form $(e^{\a_i})^{\hnu}_{n}$ for $1\leq i\leq D$ and $n\in\frac{1}{k}\ZZ$ we need only consider the vertex operators $Y^{\hnu}(e^{\a^{(j)}}, x)$ for $1\leq j\leq d$. In fact, if $l_j$ is odd or if $l_j$ is even and $\left<\a^{(j)},\nu^{\frac{l_j}{2}}\a^{(j)}\right>\in 2\mathbb{Z}$ we have 
\be\begin{aligned}\label{vertexoperatoreven}
Y^{\hnu}(e^{\a^{(j)}}, x)&=\sum_{n\in\frac{1}{l_j}\ZZ}(e^{\a^{(j)}})^{\hnu}_{n} x^{-n-\frac{<\a^{(j)},\a^{(j)}>}{2}}\\& \in (\text{End }V_L^T)[[x^{1/{l_{j}}},x^{-1/{l_{j}}}]]\subset (\text{End }V_L^T)[[x^{1/k},x^{-1/k}]]\end{aligned}.\ee

Further, if $l_j$ is even and $\left<\a^{(j)},\nu^{\frac{l_j}{2}}\a^{(j)}\right>\notin 2\mathbb{Z}$ we have

\be\begin{aligned}\label{vertexoperatorodd}
Y^{\hnu}(e^{\a^{(j)}}, x)&=\sum_{n\in\frac{1}{2l_j} + \frac{1}{l_j}\ZZ}(e^{\a^{(j)}})^{\hnu}_{n} x^{-n-\frac{<\a^{(j)},\a^{(j)}>}{2}}\\& \in (\text{End }V_L^T)[[x^{1/{2l_{j}}},x^{-1/{2l_{j}}}]]\subset (\text{End }V_L^T)[[x^{1/k},x^{-1/k}]].\end{aligned}\ee

Now applying (\ref{limitrelation}) to (\ref{vertexoperatoreven}) and (\ref{vertexoperatorodd}) we have the following lemmas.

\begin{lem}\label{limitlemma1}
We have 
\be(e^{\nu^r\a^{(i)}})^{\hnu}_n=\eta_{l_i}^{rnl_i}(e^{\a^{(i)}})^{\hnu}_n,\ee
if $\a^{(i)}$ satisfies the evenness condition and 
\be(e^{\nu^r\a^{(i)}})^{\hnu}_n=\eta_{2l_i}^{2rnl_i-1}(e^{\a^{(i)}})^{\hnu}_n,\ee
otherwise.
\end{lem}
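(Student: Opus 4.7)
The proof should be a direct calculation using \eqref{limitrelation} with $v = e^{\a^{(i)}}$, combined with the explicit lifting \eqref{lifting}. My plan has three stages: first compute $\hnu^r e^{\a^{(i)}}$; then translate $Y^{\hnu}$ through the formal substitution $x^{1/k}\mapsto \eta^{-r}x^{1/k}$; and finally convert the resulting base-$\eta$ phase into the appropriate finer root of unity.

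For the first stage I would iterate \eqref{lifting}. The crucial observation that justifies the iteration is that the evenness condition is preserved along the $\nu$-orbit of $\a^{(i)}$: the length $l_i$ is a property of the orbit, and by the isometry property of $\nu$ we have $\langle \nu^{l_i/2}\nu^j\a^{(i)},\nu^j\a^{(i)}\rangle = \langle \nu^{l_i/2}\a^{(i)},\a^{(i)}\rangle$ for all $j$. Hence the same branch of \eqref{lifting} applies at every step, and I obtain $\hnu^r e^{\a^{(i)}} = e^{\nu^r\a^{(i)}}$ when $\a^{(i)}$ satisfies the evenness condition, or $\hnu^r e^{\a^{(i)}} = \eta_{2l_i}^{\,r} e^{\nu^r\a^{(i)}}$ otherwise.

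Next I would apply \eqref{limitrelation} and unfold the substitution $x^{1/k}\mapsto \eta^{-r}x^{1/k}$ on the expansions \eqref{vertexoperatoreven}--\eqref{vertexoperatorodd}. The monomial $x^{-n-\langle \a^{(i)},\a^{(i)}\rangle/2}$ picks up a factor of $\eta^{rkn+rk\langle \a^{(i)},\a^{(i)}\rangle/2}$; since $L$ is even the second summand in the exponent lies in $k\ZZ$, so by $\eta^k = 1$ this factor collapses to $\eta^{rkn}$. Matching coefficients of $x^{-n-\langle \a^{(i)},\a^{(i)}\rangle/2}$ on both sides then expresses $(e^{\nu^r\a^{(i)}})^{\hnu}_n$ as an explicit phase times $(e^{\a^{(i)}})^{\hnu}_n$, with an extra factor of $\eta_{2l_i}^{-r}$ in the non-evenness case coming from inverting the lifting identity of the previous stage.

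Finally I would rewrite $\eta^{rkn}$ in the appropriate finer root of unity, using that by construction $\eta_{l_i} = \eta^{k/l_i}$ and $\eta_{2l_i} = \eta^{k/(2l_i)}$. In the evenness case $n \in \tfrac{1}{l_i}\ZZ$ and $k/l_i \in \ZZ$, giving $\eta^{rkn} = \eta_{l_i}^{\,rnl_i}$, which is the first assertion. In the non-evenness case $n \in \tfrac{1}{2l_i} + \tfrac{1}{l_i}\ZZ$, giving $\eta^{rkn} = \eta_{2l_i}^{\,2rnl_i}$; combining with the $\eta_{2l_i}^{-r}$ from stage one produces the second assertion. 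The calculation is essentially bookkeeping: the only nontrivial point is the orbit-invariance of the evenness condition that licenses the iteration in stage one, after which the three roots of unity $\eta$, $\eta_{l_i}$, $\eta_{2l_i}$ need only be organized carefully. I do not expect a deeper obstacle.
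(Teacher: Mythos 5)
Your approach is exactly the one the paper intends: iterate the lifting \eqref{lifting} using orbit-invariance of the evenness condition, apply \eqref{limitrelation} to the expansions \eqref{vertexoperatoreven}--\eqref{vertexoperatorodd}, and convert powers of $\eta$ to the finer roots of unity via $\eta_{l_i}=\eta^{k/l_i}$, $\eta_{2l_i}=\eta^{k/(2l_i)}$. (The paper does not spell out a proof; it just asserts the lemma after ``applying \eqref{limitrelation} to \eqref{vertexoperatoreven} and \eqref{vertexoperatorodd}.'') The even case and the first two stages are fine as written.

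There is, however, one point worth flagging. In the non-evenness case your stage-one iteration gives $\hnu^r e_{\a^{(i)}} = \eta_{2l_i}^{\,r}\,e_{\nu^r\a^{(i)}}$, and combining the resulting factor $\eta_{2l_i}^{-r}$ with $\eta^{rkn}=\eta_{2l_i}^{2rnl_i}$ yields
\begin{equation*}
(e^{\nu^r\a^{(i)}})^{\hnu}_n \;=\; \eta_{2l_i}^{\,2rnl_i - r}\,(e^{\a^{(i)}})^{\hnu}_n,
\end{equation*}
whereas you describe this as ``producing the second assertion,'' i.e.\ the printed exponent $2rnl_i - 1$. Those two agree only when $r=1$. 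Your derivation is the correct one — the exponent $-1$ in the statement of the lemma appears to be a misprint for $-r$ (or was written with the case $r=1$ in mind). The discrepancy is an $n$-independent overall scalar, so it has no effect on the downstream relations $R(i,j,r,m|t)$ (where the coefficient $\eta_{L_i}^{rn_1L_i}$ is used and any $n$-independent scalar multiple of $R$ generates the same ideal), but you should not assert that your computation reproduces the stated exponent. It is worth saying explicitly that your result differs from the printed one by the $n$-independent factor $\eta_{2l_i}^{1-r}$.

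Two smaller remarks. First, the even-case collapse $\eta^{rk\langle\a^{(i)},\a^{(i)}\rangle/2}=1$ uses only that $L$ is even, which you state correctly; good. Second, when you ``match coefficients of $x^{-n-\langle\a^{(i)},\a^{(i)}\rangle/2}$'' you implicitly use that $\langle\nu^r\a^{(i)},\nu^r\a^{(i)}\rangle=\langle\a^{(i)},\a^{(i)}\rangle$ so that both sides of the identity are expansions in the same powers of $x$; it is worth saying this explicitly since it is the isometry of $\nu$ that makes the two $x$-exponents coincide.
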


Observe that since $\nu$ is an isometry we have $\a_j^{(r)}$ satisfies the evenness condition if and only if $\a^{(r)}$ does.
Define the following subsets of $\frac{1}{k}\ZZ$ for each $i \in \mathbb{Z}$ such that $1 \le i \le d$:
\be
Z_i=\begin{cases}\frac{1}{l_i}\mathbb{Z} & \text{ if } \a^{(i)} \text{ satisfies the evenness condition,}\\
\frac{1}{2l_i}+\frac{1}{l_i}\ZZ & \text{ otherwise.}\end{cases}\ee
We also set
\be 
L_i=\begin{cases}l_i& \text{ if } \a^{(i)} \text{ satisfies the evenness condition,}\\
2l_i & \text{ otherwise.}\end{cases}\ee

For any $\a,\b\in L$ using (\ref{VertexOperators}) we have 
\be
Y^{\hnu}(e^{\alpha},x)e^{\b}=k^{-\left<\a,\a\right>/2}\epsilon(\a,\b)\sigma(\a)x^{\left<\a_{(0)},\b\right>+\frac{\left<\a_{(0)},\a_{(0)}\right>}{2}-\frac{\left<\a,\a\right>}{2}}E^{-}(-\a,x)e^{\a+\b},
\ee
in which the smallest power of $x$ is $\left<\a_{(0)},\b\right>+\frac{\left<\a_{(0)},\a_{(0)}\right>}{2}-\frac{\left<\a,\a\right>}{2}$, which implies that 
\be
\left(e^{\a}\right)^{\hnu}_n e^{\b}=0\ee
for all $n>-\left<\a_{(0)},\b\right>-\frac{\left<\a_{(0)},\a_{(0)}\right>}{2}$. Specializing this to the case when $\b=0$ we have 
\be\label{zeroops}
\left(e^{\a}\right)^{\hnu}_n 1_T=0\ee
for all $n>-\frac{\left<\a_{(0)},\a_{(0)}\right>}{2}$ and 
\be\label{highestop}
\left(e^{\a}\right)^{\hnu}_{-\frac{\left<\a_{(0)},\a_{(0)}\right>}{2}}1_T=k^{\left<\alpha,\alpha\right>/2}\sigma(\a)e^{\a}.\ee
In light of (\ref{zeroops}) and (\ref{highestop}) we define the subsets of $Z_i$
\be
Z_i^{-}=\left\{n\in Z_i\left| n\leq -\frac{\left<\a^{(i)}_{(0)},\a^{(i)}_{(0)}\right>}{2}\right.\right\}.\ee
and 
\be
Z_i^{+}=\left\{n\in Z_i\left| n> -\frac{\left<\a^{(i)}_{(0)},\a^{(i)}_{(0)}\right>}{2}\right.\right\},\ee
so that we have 
\be\label{zeros}\left(e^{\a^{(i)}}\right)^{\hnu}_n 1_T=0 \text{ for all } n\in Z_i^{+}.\ee

For each pair of $\a_i,\a_j$ in the $\ZZ$-basis of $L$ we set 
\be\label{Nmin}
N_{i,j}=\text{max}\left(\{0\}\cup \{-\left<\nu^r\a_i,\a_j\right>|r\in\ZZ\}\right)\ee
and observe that for $n\geq N_{i,j}$ and $r\in \ZZ$ we have 
\be
(e^{\nu^r\a_i})_ne^{\a_j}=0.\ee
Applying this fact to the appropriate coefficient of $x_0$ in the twisted Jacobi identity (\ref{Jacobi}),
 we have $N_{i,j}$ is the smallest non-negative integer such that
\be
(x_1-x_2)^{N_{i,j}}[Y^{\hnu}(e^{\a_i},x_1),Y^{\hnu}(e^{\a_j},x_2)]=0.\ee
This is the weak commutativity property of twisted vertex operators as constructed in \cite{DLeM}. In this work we have $N_{i,j}=0$ for all $1\leq i,j\leq D$ and so the associated vertex operators always commute. Also in this case, by Theorem 3.9 in \cite{DLeM} we have
\be
Y^{\hnu}(Y(e^{\a_i},x_0)e^{\a_j},x_2)=Y^{\hnu}(e^{\a_i},x_2+x_0)Y^{\hnu}(e^{\a_j},x_2).\ee
Extracting the coefficient of $x_0^{n-1}$ while recalling the binomial expansion convention 
\be
(x_2+x_0)^r=\sum_{m\geq0}\binom{r}{m}x_2^{r-m}x_0^m\ee
yields 
\be 
Y^{\hnu}((e^{\a_i})_{-m}e^{\a_j},x)=\frac{1}{(m-1)!}\left(\frac{\partial}{\partial x}\right)^{m-1}\bigg(Y^{\hnu}(e^{\a_i}, x)\bigg)Y^{\hnu}(e^{\a_j},x).\ee
From the theory of untwisted vertex operators \cite{LL} we have 
\be 
(e^{\a_i})_{-m}e^{\a_j}=0
\ee
for all $1\leq m\leq \left<\a_i,\a_j\right>$, which implies that for such $m$ we have 
\be\label{derivativerelations}
\frac{1}{(m-1)!}\left(\frac{\partial}{\partial x}\right)^{m-1}\bigg(Y^{\hnu}(e^{\a_i}, x)\bigg)Y^{\hnu}(e^{\a_j},x)=0.\ee
It follows that for all $1\leq m\leq \left<\nu^r \a^{(i)},\a^{(j)}\right>$ we have 
\be\label{derivativerelations2}
\frac{1}{(m-1)!}\left(\frac{\partial}{\partial x}\right)^{m-1}\bigg(Y^{\hnu}(e^{\nu^{r}\a^{(i)}}, x)\bigg)Y^{\hnu}(e^{\a^{(j)}},x)=0.\ee

Extracting appropriate coefficients of the formal variable $x$ from (\ref{derivativerelations2}) and applying the results of Lemma \ref{limitlemma1} we have the expressions

\be
R(i,j,r,m|t)=\sum_{\substack{n_1+n_2=-t \\ n_1\in Z_i^{-}\\n_2\in Z_j^{-}}}\eta_{L_i}^{rn_1L_i}\binom{-n_1-\frac{\left< \a^{(i)}, \a^{(i)} \right>}{2}}{m-1}(e^{\a^{(i)}})^{\hnu}_{n_1}(e^{\a^{(j)}})^{\hnu}_{n_2},\ee

which have the property that 
\be
R(i,j,r,m|t) v=0\ee
for all $v\in V_L^T$. Observe this gives us $l_i\left<\left(\a^{(i)}\right)_{(0)},\a^{(j)}\right>$ total relations.

\section{A presentation of $W_L^T$}

In this section we provide the necessary construction for, and a proof of our main result - a presentation of the twisted principal subspace $W_L^T$. Recall the form of a general monomial (\ref{generalmonomial}) and notice that by (\ref{Jacobi}) our monomial may take the form
\be
(e^{\b_1})^{\hnu}_{m_1}\cdots (e^{\b_r})^{\hnu}_{m_r}\cdot 1_T\ee
where $\beta_i\in\{\a^{(1)},\dots,\a^{(d)}\}$ and $m_i\in\frac{1}{k}\ZZ_{<0}$.~\\

\subsection{A polynomial ring lifting of the principal subspace} ~\\

We now recall from \cite{P} the universal commutative vertex algebra $U_L$ on generators $x_{\a_i}=x_{\a_i}(-1)$ for $1\leq i\leq D$ where the vertex operators are given by  
\be
Y_{U_L}(x_{\a_i}(-1),x)=\sum_{n\in\ZZ_{<0}} x_{\a_i}(n)x^{-n-1}\ee
subject to the commutation relations 
\be 
[Y_{U_L}(x_{\a_i}(-1),x_1),Y_{U_L}(x_{\a_j}(-1),x_2)]=0
\ee 
for all $x_{\a_i}(-1),x_{\a_j}(-1)\in U_L$ and no other relations. This object was used in \cite{P} to study the untwisted version of the work in this paper. Explicitly, we can realize $U_L$ as the principal subalgebra of the rank $D$ lattice vertex operator algebra whose associated Gram matrix is the $D\times D$ zero matrix. Observe that we may view $U_L$ as a polynomial algebra in infinitely many variables
\be
U_L\cong\CC[x_{\a_i}(n)|1\leq i\leq D, n\leq -1].\ee

There is a natural surjection defined on monomials by
\be\begin{aligned}\label{untwistedsurjection}
f_L:U_L&\to W_L\\
x_{\a_{i_1}}(n_{i_1})\cdots x_{\a_{i_j}}(n_{i_j})&\mapsto (e^{\a_{i_1}})_{n_1}\cdots (e^{\a_{i_j}})_{n_j}\cdot 1
\end{aligned}\ee
and extended linearly.
Under this surjection, we may lift the automorphism $\hnu$ restricted to $W_L$ to an automorphism of $U_L$, denoted $\tilde{\nu}$ so that the following diagram commutes.
\be
\begin{tikzcd}
U_L \arrow{r}{\tilde{\nu}} \arrow[swap]{d}{f_L} & U_L \arrow{d}{f_L} \\
W_L  \arrow{r}{\hnu} & W_L
\end{tikzcd}
\ee

Using $U_L$ as motivation, we define
\be
U_L^T=\CC\bigg[ x_{\a^{(i)}}^{\tnu}(n)\bigg| 1\leq i\leq d, n\in Z_i \bigg].\ee
For notational convenience, we define 
\be\label{ExtraTerms}
x_{\a^{(i)}}^{\tnu}(n)=0\mbox{ whenever }n \notin Z_i.
\ee
We have a surjection (\ref{untwistedsurjection}) defined by
\be\begin{aligned}
f_L^T:U_L^T&\to W_L^T\\
x_{\a^{(i_1)}}^{\tnu}(n_{i_1}) \cdots x_{\a^{(i_j)}}^{\tnu}(n_{i_j})&\mapsto (e^{\a^{(i_1)}})^{\hnu}_{n_{i_1}}\cdots  (e^{\a^{(i_j)}})^{\hnu}_{n_{i_j}} \cdot 1_T,\end{aligned}\ee
and extended linearly to all of $U_L^T$. We note that this map is well-defined since the operators  $(e^{\a^{(i)}})^{\hnu}_n$ all commute.

\begin{rem}
We note here that $U_L^T$ plays the same role that the universal enveloping algebra $U(\on)$ played in in \cite{PS1}-\cite{PS2}, \cite{CalMPe}, and \cite{CalLM4}.  In \cite{P}, $U_L$ played similar in place of the universal enveloping algebra $U(\bar{\frak{n}})$ from \cite{CLM1}-\cite{CLM2} and \cite{CalLM1}-\cite{CalLM3}.  
\end{rem}

The aim for the remaining portion of this section is to describe the ideal $\text{Ker }f_L^T\subset U_L^T$ in terms of certain quadratic expressions. In this commutative setting we need only consider expressions that are constructed from (\ref{derivativerelations}) and lifted to $U_L^T$.
With this, we consider the following expressions 
\be
R(i,j,r,m|t)=\sum_{\substack{n_1+n_2=-t \\ n_1\in Z_i^{-}\\n_2\in Z_j^{-}}}\eta_{L_i}^{rn_1L_i}\binom{-n_1-\frac{\left< \a^{(i)}, \a^{(i)} \right>}{2}}{m-1}x_{\a^{(i)}}^{\tnu}(n_1)x_{\a^{(j)}}^{\tnu}({n_2}),\ee
 where $-t \in Z_i^- + Z_j^-$. Let $J_L^T\subset U_L^T$ be the left ideal generated by all of these sums. Also, define

\be U_L^{T+}=U_L^T \CC\bigg[ x_{\a^{(i)}}^{\tnu}(n)\bigg| 1\leq i\leq d, n\in Z_i^+ \bigg].\ee

By (\ref{zeros}), we have 
\be
f_L^T(x_{\a^{(i)}}^{\tnu}(n)) = 0
\ee
for all $n \in Z^+_i$. We define the ideal $I_L^T$ by
\be
I_L^T = J_L^T + U_L^{T+}
\ee 
and will now proceed to show that 
Ker$f_L^T = I_L^T$.~\\

\begin{rem}
In this setting we are defining $U_L^{T+}$ slightly differently than in previous works, where it was called $U(\bar{\frak{n}})\bar{\frak{n}}_+$. Observe that, for example, it contains the monomials 
\be
x_{\a^{(i)}}^{\tnu}\left(-\frac{1}{L_i}\right),\dots, x_{\a^{(i)}}^{\tnu}\left(-\frac{\left<\a^{(i)}_{(0)},\a^{(i)}_{(0)}\right>}{2}+\frac{1}{l_i}\right),\ee
for all $1\leq i\leq d$. This is in parallel to the terms that must be added to the ideals in the case of principal subspaces of non-vacuum modules for lattice vertex operator algebras. In \cite{P}, we found that if a lattice had an integral basis given by $\{\b_1,\dots,\b_d\}$ with nondegenerate bilinear form $\left<\cdot,\cdot\right>:L\times L\to \mathbb{Z}$ such that $\left<\b_i,\b_j\right>$ and $\{\omega_1,\dots,\omega_d\}$ is the associated dual basis then 
\be
W_{L+\omega_i}=U_L/I_{L+\omega_i}\ee
where
\be 
I_{L+\omega_i}=J_L+U_L^++U_Lx_{\a_i}(-1).
\ee\\
\end{rem}

\subsection{New Relations}~\\
We begin by identifying several important elements of $I_L^T$ which will be needed later.
\begin{lem}\label{NewRelations}
For all $i,j,s,t \in \mathbb{Z}$ such that $1\le i,j \le d$,  $s,t\ge 0$, and $s+t \le l_i \left< \alpha^{(i)}_{(0)}, \alpha^{(j)} \right>-1$, we have
\begin{equation}
x^{\hnu}_{\alpha^{(i)}}\left({-\frac{\left< \alpha^{(i)}_{(0)},\alpha^{(i)}_{(0)} \right>}{2}-\frac{s}{l_i}}\right) x^{\hnu}_{\alpha^{(j)}}\left({-\frac{\left< \alpha^{(j)}_{(0)},\alpha^{(j)}_{(0)} \right>}{2}-\frac{t}{l_i}}\right) \in I_L^T.
\end{equation}
\end{lem}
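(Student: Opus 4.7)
Abbreviate $N_i := -\tfrac{1}{2}\langle \alpha^{(i)}_{(0)}, \alpha^{(i)}_{(0)}\rangle$, $N_j$ analogously, and $S := s + t$. The plan is, for each valid height $S$ in the range $0 \le S \le l_i\langle \alpha^{(i)}_{(0)}, \alpha^{(j)}\rangle - 1$, to place the entire family of monomials
\begin{equation*}
M_s := x^{\tnu}_{\alpha^{(i)}}(N_i - s/l_i)\, x^{\tnu}_{\alpha^{(j)}}(N_j - (S-s)/l_i), \qquad s = 0, 1, \dots, S,
\end{equation*}
into $I_L^T$ all at once. Whenever $N_j - (S-s)/l_i \notin Z_j$ the relevant $M_s$ is already $0$ by \eqref{ExtraTerms}, so the genuine content of the lemma concerns those $s$ for which both exponents lie in $Z_i^-$ and $Z_j^-$ respectively.

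Set $T := -N_i - N_j + S/l_i$. The family $\{R(i, j, r, m \mid T)\}$ with $0 \le r \le l_i - 1$ and $1 \le m \le \langle \nu^r \alpha^{(i)}, \alpha^{(j)}\rangle$ consists of $l_i \langle \alpha^{(i)}_{(0)}, \alpha^{(j)}\rangle$ elements of $J_L^T \subset I_L^T$. The constraints $n_1 \in Z_i^-$, $n_2 \in Z_j^-$, $n_1 + n_2 = -T$ in the defining sum of $R(i, j, r, m \mid T)$ force the summation index $a := l_i(N_i - n_1)$ into $\{0, 1, \dots, S\}$, and each surviving term is a scalar multiple of $M_a$. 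Applying Lemma \ref{limitlemma1} together with the elementary identity $\eta_{L_i}^{-r a L_i/l_i} = \eta_{l_i}^{-r a}$ (which holds irrespective of the evenness condition), the coefficient of $M_a$ in $R(i, j, r, m \mid T)$ equals, up to a nonzero $a$-independent scalar, $\eta_{l_i}^{-ra}\binom{a/l_i + c_i}{m-1}$, where $c_i := \tfrac{1}{2}\langle \alpha^{(i)}_{(0)}, \alpha^{(i)}_{(0)}\rangle - \tfrac{1}{2}\langle \alpha^{(i)}, \alpha^{(i)}\rangle$.

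We view these $l_i\langle \alpha^{(i)}_{(0)}, \alpha^{(j)}\rangle$ relations as a linear system in the $S + 1$ unknowns $M_0, \dots, M_S$. Its coefficient matrix mixes a Vandermonde-type factor $(\eta_{l_i}^{-ra})_{r,a}$ (indexed across orbit slots) with a Pascal-type factor $\left(\binom{a/l_i + c_i}{m - 1}\right)_{m,a}$ (indexed across the derivative orders coming from \eqref{derivativerelations}), so it is exactly of the form handled by the ``generalized Pascal matrix'' machinery developed in the Appendix. The hypothesis $S + 1 \le l_i\langle \alpha^{(i)}_{(0)}, \alpha^{(j)}\rangle$ provides at least as many relations as monomials, and the Appendix guarantees that the matrix attains full column rank $S + 1$. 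Each $M_s$ is then expressible as a $\mathbb{C}$-linear combination of the $R(i, j, r, m \mid T)$, yielding $M_s \in J_L^T \subset I_L^T$.

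The principal obstacle is the full-column-rank claim for the generalized Pascal matrix. The Pascal blocks attached to different values of $r$ carry identical binomial content but are modulated by distinct primitive $l_i$-th roots of unity, so the rank analysis neither splits as a tensor product nor reduces to the classical invertibility of a Pascal or a Vandermonde matrix; this is precisely the technical core of the Appendix. Everything else---the case analysis introduced by the evenness condition, the reduction to the $Z_i^- \times Z_j^-$ portion of the sum, and the identification of the relevant $a$-range---is routine bookkeeping once that rank statement is in hand.
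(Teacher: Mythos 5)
Your proof is correct and follows essentially the same route as the paper: you invoke the family of relations $R(i,j,r,m\,|\,T)$ at the single height $T = \tfrac{1}{2}\langle\alpha^{(i)}_{(0)},\alpha^{(i)}_{(0)}\rangle + \tfrac{1}{2}\langle\alpha^{(j)}_{(0)},\alpha^{(j)}_{(0)}\rangle + \tfrac{S}{l_i}$ and reduce the claim to a rank condition on the generalized Pascal matrix of the Appendix. The one structural difference is that the paper first pads each $R$ with $q$ additional terms drawn from $U_L^{T+}$ to produce a \emph{square} system whose coefficient matrix $A$, of size $l_i\langle\alpha^{(i)}_{(0)},\alpha^{(j)}\rangle$, is then inverted via Theorem~\ref{GeneralizedPascal}, whereas you keep the unpadded relations and argue full column rank of the rectangular $l_i\langle\alpha^{(i)}_{(0)},\alpha^{(j)}\rangle\times(S+1)$ system; since Theorem~\ref{GeneralizedPascal} as stated concerns the square case, you still need the (easy) observation that your matrix is precisely the first $S+1$ columns of the invertible $A$, which forces the column independence you use.
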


\begin{proof}
For notational convenience in this proof, we write
\be
a_i = \frac{\left< \alpha^{(i)}_{(0)},\alpha^{(i)}_{(0)} \right>}{2}
\ee
and
\be
a_j = \frac{\left< \alpha^{(j)}_{(0)},\alpha^{(j)}_{(0)} \right>}{2}
\ee
Define $q = l_i \left< \alpha^{(i)}_{(0)}, \alpha^{(j)} \right>-1-s-t$. Consider the relations
\begin{equation}
R\left(i,j,r,m \bigg\vert a_i+a_j+\frac{s+t}{l_i}\right)=\sum_{\substack{n_1+n_2=-\left(a_i+a_j+\frac{s+t}{l_i}\right) \\ n_1\in Z_i^{-}\\n_2\in Z_j^{-}}}\eta_{L_i}^{rn_1L_i}\binom{-n_1-\frac{\left< \a^{(i)}, \a^{(i)} \right>}{2}}{m-1}x_{\a^{(i)}}^{\tnu}(n_1)x_{\a^{(j)}}^{\tnu}({n_2})
\end{equation}
for $0 \le r \le l_i-1$ and $1 \le m \le \left< \nu^r \alpha^{(i)},\alpha^{(j)} \right>$
and note that each sum contains a multiple of the element
$x^{\hnu}_{\alpha^{(i)}}\left({-a_i-\frac{s}{l_i}}\right) x^{\hnu}_{\alpha^{(j)}}\left({-a_j-\frac{t}{l_i}}\right)$.
To each such sum, we add the $q$ terms
\begin{equation}
\sum_{l=1}^q \eta_{L_i}^{r(-a_i-\frac{s+t+l}{l_i})L_i}\binom{a_i+\frac{s+t+l}{l_i}-\frac{\left< \a^{(i)}, \a^{(i)} \right>}{2}}{m-1}
x^{\hnu}_{\alpha^{(i)}}\left({-a_i-\frac{s+t+l}{l_i}}\right) x^{\hnu}_{\alpha^{(j)}}\left({-a_j+\frac{l}{l_i}}\right),
\end{equation}
which is an element of $U_L^{T+}$, and define the new sum of elements from $I_L^T$:
\begin{eqnarray*}
\lefteqn{R'\left(i,j,r,m\bigg\vert a_i+a_j+\frac{s+t}{l_i}\right)}\\
&&=\sum_{p=0}^{l_i\left< \alpha^{(i)}_{(0)}, \alpha^{(j)} \right> -1 }\eta_{L_i}^{r(-a_i- \frac{p}{l_i})L_i}\binom{a_i + \frac{p}{l_i}-\frac{\left< \a^{(i)}, \a^{(i)} \right>}{2}}{m-1}x_{\a^{(i)}}^{\tnu}\left(-a_i- \frac{p}{l_i}\right)x_{\a^{(j)}}^{\tnu}\left({-a_j-\frac{s+t}{l_i}+\frac{p}{l_i}}\ \right)
\end{eqnarray*}
for $0 \le r \le l_i-1$ and $1 \le m \le \left< \nu^r \alpha^{(i)},\alpha^{(j)} \right>$.
We rewrite each of these new sums as the matrix equation
\be A \mathbf{x}_{i,j}=\mathbf{R}\ee
where
\be
({\mathbf{x}_{i,j}})_p=x_{\a^{(i)}}^{\tnu}\left(-a_i- \frac{p}{l_i}\right)x_{\a^{(j)}}^{\tnu}\left({-a_j-\frac{s+t}{l_i}+\frac{p}{l_i}}\ \right)\ee
$0\leq p \leq l_i\left<\alpha^{(i)}_{(0)},\alpha^{(j)}\right>-1$,
where 
where the $(r,m)^{th}$ row (ordered lexicographically) of $A$ is $\mathcal{R}(r,m)^t\in\CC^{l_i\left<\alpha^{(i)}_{(0)},\alpha^{(j)}\right>}$ has $p$th entry given by 
\be
\mathcal{R}(r,m)_p=\eta_{L_i}^{r(-a_i- \frac{p}{l_i})L_i}\binom{a_i + \frac{p}{l_i}-\frac{\left< \a^{(i)}, \a^{(i)} \right>}{2}}{m-1},\ee
and where
\be \mathbf{R}^t=\left(R'\left(i,j,r,m\bigg\vert a_i+a_j+\frac{s+t}{l_i}\right)\right)    \ee
where  $1\leq m\leq \left<\nu^r\alpha^{(i)},\alpha^{(j)}\right>$, $0\leq r<l_i$ and the entries are arranged to correspond to the structure of $A$. The matrix $A$ is a square matrix of dimension $l_i \left< \alpha^{(i)}_{(0)},\alpha^{(j)}\right>$ which is row-equivalent to the matrix in Theorem \ref{GeneralizedPascal} of the Appendix. Using Theorem \ref{GeneralizedPascal}, we have that $A$ is invertible, and rewriting our matrix equation as
\be  \mathbf{x}_{i,j}=A^{-1}\mathbf{R}\ee
we have that 
\begin{equation}
x^{\hnu}_{\alpha^{(i)}}\left({-a_i-\frac{s}{l_i}}\right) x^{\hnu}_{\alpha^{(j)}}\left({-a_j-\frac{t}{l_i}}\right)
\end{equation}
is a linear combination of elements of $I_L^T$, and so
\be 
x^{\hnu}_{\alpha^{(i)}}\left({-a_i-\frac{s}{l_i}}\right) x^{\hnu}_{\alpha^{(j)}}\left({-a_j-\frac{t}{l_i}}\right) \in I_L^T
\ee
\end{proof}

\begin{rem}
The relations in this subsection did not appear in the earlier works \cite{CalLM4}, \cite{CalMPe}, and \cite{PS1}-\cite{PS2} on principal subspaces of standard modules for twisted affine Lie algebras. Such relations could have certainly been derived using a similar technique, but the relations are trivially true. Relations similar to the ones derived in this subsection, were, however, necessary in \cite{P} and \cite{MPe}, where the more general lattice case was studied. 
\end{rem}~\\

\subsection{Important Morphisms}~\\

In the spirit of \cite{CalLM4},\cite{CalMPe}, \cite{PS1}, and \cite{PS2} we consider the following shifting maps on $U_L^T$. For $\gamma\in\h_{(0)}$ define
\be\begin{aligned}\label{definetau}
\tau_{\gamma}:U_L^T&\to U_L^T\\
x_{\a^{(i)}}^{\tnu}(n)&\mapsto x_{\a^{(i)}}^{\tnu}\left(n+\left<\left(\a^{(i)}\right)_{(0)},\gamma\right>\right),\end{aligned}\ee
where $n\in Z_i$. 

As in previous work \cite{PS1}-\cite{PS2}, we use elements of $\h_{(0)}$ associated to the basis, $\mathcal{B}^{\star}$, of $L^{\circ}$. As such, for $1 \le i \le d$, we set
\be
\gamma_i=\left(\lambda^{(i)}\right)_{(0)}=\frac{1}{l_i}\sum_{r=0}^{l_i-1}\nu^r\lambda^{(i)}.\ee
Observe that we have 
$$\ta{i}\left(x_{\a^{(j)}}^{\tnu}\left(n\right)\right)=x_{\a^{(j)}}^{\tnu}\left(n+\frac{1}{l_i}\delta_{i,j}\right)$$
for each $j \in \mathbb{Z}$ such that $1 \le j \le d$.

\begin{lem} \label{lemma1}For each $i \in \mathbb{Z}$ such that $1 \le i \le d$ we have
$$
\ta{i}\left(I_L^T+U_L^Tx_{\a^{(i)}}^{\tnu}\left(-\frac{\left<\a^{(i)}_{(0)},\a^{(i)}_{(0)}\right>}{2}\right)\right)=I_L^T.$$

\end{lem}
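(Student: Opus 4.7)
The plan exploits the fact that the shift $n\mapsto n+\tfrac{1}{l_i}$ is a bijection of $Z_i$, so $\ta{i}$ is a $\mathbb{C}$-algebra automorphism of $U_L^T$: it permutes the polynomial generators, acting nontrivially only on the generators $x^{\tnu}_{\alpha^{(i)}}(n)$. Both containments will therefore reduce to checking how $\ta{i}$ and its inverse act on the three classes of natural generators of $I_L^T$: the ``positive'' generators $x^{\tnu}_{\alpha^{(j)}}(n)$ with $n\in Z_j^+$ of $U_L^{T+}$, the adjoined element $x^{\tnu}_{\alpha^{(i)}}\bigl(-\tfrac{\langle\alpha^{(i)}_{(0)},\alpha^{(i)}_{(0)}\rangle}{2}\bigr)$, and the quadratic relations $R(p,q,r,m|t)$ generating $J_L^T$.

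For the inclusion $\subseteq$: the first class is sent into $U_L^{T+}$ since a shift by $\tfrac{1}{l_i}$ moves $Z_i^+$ into $Z_i^+$, and $\ta{i}\bigl(x^{\tnu}_{\alpha^{(i)}}\bigl(-\tfrac{\langle\alpha^{(i)}_{(0)},\alpha^{(i)}_{(0)}\rangle}{2}\bigr)\bigr)=x^{\tnu}_{\alpha^{(i)}}\bigl(-\tfrac{\langle\alpha^{(i)}_{(0)},\alpha^{(i)}_{(0)}\rangle}{2}+\tfrac{1}{l_i}\bigr)\in U_L^{T+}$. For a relation $R(p,q,r,m|t)$, the case $p,q\neq i$ is trivial. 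When $p=i$ and $q\neq i$, substitute $n_1'=n_1+\tfrac{1}{l_i}$ in the sum defining $\ta{i}(R(i,q,r,m|t))$; the new index $n_1'$ ranges over $Z_i^-\cup\bigl\{-\tfrac{\langle\alpha^{(i)}_{(0)},\alpha^{(i)}_{(0)}\rangle}{2}+\tfrac{1}{l_i}\bigr\}$. The summand at the extra index contains the factor $x^{\tnu}_{\alpha^{(i)}}(n_1')\in U_L^{T+}$, and on the remaining range $n_1'\in Z_i^-$ the shifted expression is comparable term-by-term with $R(i,q,r,m'|t-\tfrac{1}{l_i})$ for various $m'$: viewing $\binom{-n_1-\langle\alpha^{(i)},\alpha^{(i)}\rangle/2}{m-1}=\binom{-n_1'-\langle\alpha^{(i)},\alpha^{(i)}\rangle/2+\frac{1}{l_i}}{m-1}$ as a polynomial of degree $m-1$ in $y=-n_1'-\langle\alpha^{(i)},\alpha^{(i)}\rangle/2$ and expanding it via a standard Pascal-type identity as $\sum_{m'=1}^{m}c_{m,m'}\binom{y}{m'-1}$, while $\eta_{L_i}^{rn_1L_i}$ differs from $\eta_{L_i}^{rn_1'L_i}$ only by a fixed global constant, one finds that $\ta{i}(R(i,q,r,m|t))$ is a $\mathbb{C}$-linear combination of the $R(i,q,r,m'|t-\tfrac{1}{l_i})$'s plus a single term in $U_L^{T+}$, and hence lies in $I_L^T$. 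The case $q=i$, $p\neq i$ is symmetric, and $p=q=i$ is handled the same way with a double shift, yielding up to two top-index correction terms each in $U_L^{T+}$.

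For the reverse inclusion $\supseteq$, we apply the analogous argument to $\ta{i}^{-1}$. The preimage of each $x^{\tnu}_{\alpha^{(j)}}(n)\in U_L^{T+}$ lies in $U_L^{T+}$, except when $j=i$ and $n=-\tfrac{\langle\alpha^{(i)}_{(0)},\alpha^{(i)}_{(0)}\rangle}{2}+\tfrac{1}{l_i}$ (the smallest element of $Z_i^+$), in which case the preimage equals $x^{\tnu}_{\alpha^{(i)}}\bigl(-\tfrac{\langle\alpha^{(i)}_{(0)},\alpha^{(i)}_{(0)}\rangle}{2}\bigr)$, which is precisely the distinguished generator we have adjoined. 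The preimage of $R(p,q,r,m|t)$ is again a combination of $R(p,q,r,m'|t+\tfrac{1}{l_i})$'s plus correction terms; in the cases $p=i$ or $q=i$, those correction terms carry the factor $x^{\tnu}_{\alpha^{(i)}}\bigl(-\tfrac{\langle\alpha^{(i)}_{(0)},\alpha^{(i)}_{(0)}\rangle}{2}\bigr)$ and therefore lie in $U_L^T\cdot x^{\tnu}_{\alpha^{(i)}}\bigl(-\tfrac{\langle\alpha^{(i)}_{(0)},\alpha^{(i)}_{(0)}\rangle}{2}\bigr)$. The principal technical nuisance is the case $p=q=i$, where two indices shift simultaneously and one must carefully track the Pascal-type expansion alongside the two possibilities for the evenness condition on $\alpha^{(i)}$ (which determines whether $L_i=l_i$ or $L_i=2l_i$ and whether $Z_i=\tfrac{1}{l_i}\mathbb{Z}$ or $\tfrac{1}{2l_i}+\tfrac{1}{l_i}\mathbb{Z}$); the structure of the argument is unchanged, with only the global $\eta$-constant and the precise shape of the sum differing.
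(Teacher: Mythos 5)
Your proposal is correct and follows essentially the same route as the paper's proof: treat $\ta{i}$ as an algebra automorphism of $U_L^T$ (fixing generators in orbits $j\neq i$ and shifting those in orbit $i$ by $\tfrac{1}{l_i}$), check its effect on the three classes of generators of the ideal, use the Pascal/Chu--Vandermonde expansion of the shifted binomial coefficient to rewrite the shifted quadratic relations as combinations of $R(i,j,r,m'|t\mp\tfrac{1}{l_i})$ plus boundary correction terms landing in $U_L^{T+}$ (or in $U_L^T x^{\tnu}_{\alpha^{(i)}}\bigl(-\tfrac{\langle\alpha^{(i)}_{(0)},\alpha^{(i)}_{(0)}\rangle}{2}\bigr)$ for the reverse direction), and run the same computation with $\tau_{-\gamma_i}$ for the reverse inclusion. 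The case split and the level of detail match the paper's argument.
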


\begin{proof}
We begin by showing 
\be
\ta{i}\left(I_L^T+U_L^Tx_{\a^{(i)}}^{\tnu}\left(-\frac{\left<\a^{(i)}_{(0)},\a^{(i)}_{(0)}\right>}{2}\right)\right)\subset I_L^T.
\ee
First, we not that $\tau_{\gamma_i}(R(j,k,r,m|t) = R(j,k,r,m|t)$ whenever $j \neq i$ and $k \neq i$.
Next, we examine  $\ta{j}\left(R(j,i,r,m|t)\right)$ when $i \neq j$. We have
\begin{eqnarray*}
\lefteqn{\ta{i}\left(R(j,i,r,m|t)\right)}\\
&=&\sum_{\substack{n_1+n_2=-t\\n_1\in Z^-_j, n_2\in Z^-_i }}\eta_{L_j}^{rn_1L_j}\binom{-n_1-\frac{\left< \a^{(j)}, \a^{(j)} \right>}{2}}{m-1}x_{\a^{(j)}}^{\tnu}\left(n_1\right)x_{\a^{(i)}}^{\tnu}(n_2+\frac{1}{l_i})\\
&=&\sum_{\substack{n_1+n_2=-t+\frac{1}{l_i}\\n_1\in Z^-_j, n_2\in Z^-_i}}\eta_{L_j}^{rn_1L_j}\binom{-n_1-\frac{\left< \a^{(j)}, \a^{(j)} \right>}{2}}{m-1}x_{\a^{(j)}}^{\tnu}\left(n_1\right)x_{\a^{(i)}}^{\tnu}(n_2) \\&&+ cx_{\a^{(j)}}^{\tnu}\left(-t+\frac{\left<\a_{(0)}^{(j)},\a_{(0)}^{(i)}\right>}{2}\right)x_{\a^{(i)}}^{\tnu}\left(-\frac{\left<\a_{(0)}^{(i)},\a_{(0)}^{(i)}\right>}{2} + \frac{1}{l_i}\right)\\
&=&R\left(j,i,r,m|t-\frac{1}{l_i}\right) + cx_{\a^{(j)}}^{\tnu}\left(-t+\frac{\left<\a_{(0)}^{(i)},\a_{(0)}^{(i)}\right>}{2}\right)x_{\a^{(i)}}^{\tnu}\left(-\frac{\left<\a_{(0)}^{(i)},\a_{(0)}^{(i)}\right>}{2} + \frac{1}{l_i}\right)
,\end{eqnarray*}
where $c$ is some constant and we note that $$x_{\a^{(j)}}^{\tnu}\left(-t+\frac{\left<\a_{(0)}^{(i)},\a_{(0)}^{(i)}\right>}{2}\right)x_{\a^{(i)}}^{\tnu}\left(-\frac{\left<\a_{(0)}^{(i)},\a_{(0)}^{(i)}\right>}{2} + \frac{1}{l_i}\right)\in U_L^{T+}.$$

In the case that $i \neq j$, we also have
\begin{eqnarray*}
\lefteqn{\ta{i}\left(R(i,j,r,m|t)\right)}\\
&=&\sum_{\substack{n_1+n_2=-t\\n_1\in Z_i^{-}, n_2\in Z_j^{-} }}\eta_{L_i}^{rn_1L_i}\binom{-n_1-\frac{\left< \a^{(i)}, \a^{(i)} \right>}{2}}{m-1}x_{\a^{(i)}}^{\tnu}\left(n_1+\frac{1}{l_i}\right)x_{\a^{(j)}}^{\tnu}(n_2)\\
&=&\eta_{L_i}^{-r\frac{L_i}{l_i}}\sum_{\substack{n_1+n_2=-t+\frac{1}{l_i}\\n_1\in \frac{1}{l_i}+Z_i^-, n_2\in Z_j^- }}\eta_{L_i}^{rn_1L_i}\binom{-n_1-\frac{\left< \a^{(i)}, \a^{(i)} \right>}{2}+\frac{1}{l_i}}{m-1}x_{\a^{(i)}}^{\tnu}\left(n_1\right)x_{\a^{(j)}}^{\tnu}(n_2)\\
  &&+ cx_{\a^{(i)}}^{\tnu}\left(-\frac{\left< \alpha^{(i)}_{(0)},\alpha^{(i)}_{(0)} \right>}{2}+ \frac{1}{l_i}\right)x_{\a^{(j)}}^{\tnu}\left(-t+\frac{\left< \alpha^{(i)}_{(0)},\alpha^{(i)}_{(0)} \right>}{2}\right)\\
&=&\eta_{L_i}^{-r\frac{L_i}{l_i}}\sum_{\substack{n_1+n_2=-t+\frac{1}{l_i}\\n_1\in Z_i, n_2\in Z_j \\n_1,n_2<0}}\eta_{L_i}^{rn_1L_i}\binom{-n_1-\frac{\left< \a^{(i)}, \a^{(i)} \right>}{2}+\frac{1}{l_i}}{m-1}x_{\a^{(i)}}^{\tnu}\left(n_1\right)x_{\a^{(j)}}^{\tnu}(n_2)\\
&& + cx_{\a^{(i)}}^{\tnu}\left(-\frac{\left<\a_{(0)}^{(i)},\a_{(0)}^{(i)}\right>}{2} + \frac{1}{l_i}\right)x_{\a^{(j)}}^{\tnu}\left(-t+\frac{\left<\a_{(0)}^{(i)},\a_{(0)}^{(i)}\right>}{2}\right)
\end{eqnarray*}
for some constant $c$.
We observe that
 \begin{eqnarray}
 \lefteqn{x_{\a^{(i)}}^{\tnu}\left(-\frac{\left<\a_{(0)}^{(i)},\a_{(0)}^{(i)}\right>}{2} + \frac{1}{l_i}\right)x_{\a^{(j)}}^{\tnu}\left(-t+\frac{\left<\a_{(0)}^{(i)},\a_{(0)}^{(i)}\right>}{2}\right)}\\
 &=& x_{\a^{(j)}}^{\tnu}\left(-t+\frac{\left<\a_{(0)}^{(i)},\a_{(0)}^{(i)}\right>}{2}\right)x_{\a^{(i)}}^{\tnu}\left(-\frac{\left<\a_{(0)}^{(i)},\a_{(0)}^{(i)}\right>}{2} + \frac{1}{l_i}\right)\in U_L^{T+}
 \end{eqnarray}
 and only examine the remaining terms.
Using
$$\binom{-n_1-\frac{\left< \a^{(i)}, \a^{(i)} \right>}{2}+\frac{1}{l_i}}{m-1}=\sum_{s=0}^{m-1}\binom{\frac{1}{l_i}}{s}\binom{-n_1-\frac{\left< \a^{(i)}, \a^{(i)} \right>}{2}}{m-1-s}$$
we have 
$$\ta{i}\left(R(i,j,r,m|t)\right)=\eta_{L_i}^{-r\frac{L_i}{l_i}}\sum_{s=0}^{m-1}\binom{\frac{1}{l_i}}{s}R\left(i,j,r,m-s|t-\frac{1}{l_i}\right)\in I_L^T.$$

Finally, if $i=j$, we have:
\begin{eqnarray*}
\lefteqn{\ta{i}\left(R(i,i,r,m|t)\right)}\\
&=&\sum_{\substack{n_1+n_2=-t\\n_1,n_2\in Z_i, \\n_1,n_2<0}}\eta_{L_i}^{rn_1L_i}\binom{-n_1-\frac{\left< \a^{(i)}, \a^{(i)} \right>}{2}}{m-1}x_{\a^{(i)}}^{\tnu}\left(n_1+\frac{1}{l_i}\right)x_{\a^{(i)}}^{\tnu}\left(n_2+\frac{1}{l_i}\right)\\
&=&\eta_{L_i}^{-r\frac{L_i}{l_i}}\sum_{\substack{n_1+n_2=-t+\frac{2}{l_i}\\n_1,n_2\in Z_i, \\n_1,n_2<0}}\eta_{L_i}^{rn_1L_i}\binom{-n_1-\frac{\left< \a^{(i)}, \a^{(i)} \right>}{2}+\frac{1}{l_i}}{m-1}x_{\a^{(i)}}^{\tnu}\left(n_1\right)x_{\a^{(i)}}^{\tnu}(n_2)\\
 && + cx_{\a^{(i)}}^{\tnu}\left(-\frac{\left< \alpha^{(i)}_{(0)},\alpha^{(i)}_{(0)} \right>}{2}+ \frac{1}{l_i}\right)x_{\a^{(i)}}^{\tnu}\left(-t+\frac{1}{l_i} + \frac{\left< \alpha^{(i)}_{(0)},\alpha^{(i)}_{(0)} \right>}{2}\right)\\
&=&\eta_{L_i}^{-r\frac{L_i}{l_i}}\sum_{k=0}^{m-1}\binom{\frac{1}{l_i}}{k}R\left(i,j,m-k|t-\frac{2}{l_i}\right) +  cx_{\a^{(i)}}^{\tnu}\left(-\frac{\left<\a_{(0)}^{(i)},\a_{(0)}^{(i)}\right>}{2} + \frac{1}{l_i}\right)x_{\a^{(i)}}^{\tnu}\left(-t+\frac{1}{l_i} +\frac{\left<\a_{(0)}^{(i)},\a_{(0)}^{(i)}\right>}{2}\right)
\end{eqnarray*}
for some constant $c \in \mathbb{C}$.
Observe that 
\begin{align*}x_{\a^{(i)}}^{\tnu}\left(-\frac{\left<\a_{(0)}^{(i)},\a_{(0)}^{(i)}\right>}{2} + \frac{1}{l_i}\right)&x_{\a^{(i)}}^{\tnu}\left(-t+\frac{1}{l_i} +\frac{\left<\a_{(0)}^{(i)},\a_{(0)}^{(i)}\right>}{2}\right)\\&= x_{\a^{(i)}}^{\tnu}\left(-t+\frac{1}{l_i} +\frac{\left<\a_{(0)}^{(i)},\a_{(0)}^{(i)}\right>}{2}\right)x_{\a^{(i)}}^{\tnu}\left(-\frac{\left<\a_{(0)}^{(i)},\a_{(0)}^{(i)}\right>}{2} + \frac{1}{l_i}\right)\in U_L^{T+} \subset I_L^T\end{align*} and that $\sum_{k=0}^{m-1}\binom{\frac{1}{l_i}}{k}R\left(i,j,m-k|t-\frac{2}{l_i}\right) \in I_L^T$.
It is easy to see that $\tau(U_L^{T+}) \subset I_L^T$.

Finally, we have that 
\begin{align*}
\ta{i}\left(x_{\a^{(i)}}^{\tnu}\left(-\frac{\left< \a^{(i)}_{(0)},\a^{(i)}_{(0)}\right>}{2}\right)\right) = x_{\a^{(i)}}^{\tnu}\left(-\frac{\left<\a^{(i)}_{(0)},\a^{(i)}_{(0)}\right>}{2}+ \frac{1}{l_i}\right)\in {U_L^T}x_{\a^{(i)}}^{\tnu}\left(-\frac{\left<\a^{(i)}_{(0)},\a^{(i)}_{(0)}\right>}{2}+ \frac{1}{l_i}\right)\subset I_L^T.
\end{align*}
 Since $I_L^T+U_L^Tx_{\a^{(i)}}^{\tnu}\left(-\frac{\left<\a^{(i)}_{(0)},\a^{(i)}_{(0)}\right>}{2}\right)$ is the left ideal of $U_L^T$ generated by the terms examined above, we have that 
$$
\ta{j}\left(I_L^T+U_L^Tx_{\a^{(i)}}^{\tnu}\left(-\frac{\left<\a^{(i)}_{(0)},\a^{(i)}_{(0)}\right>}{2}\right)\right)\subset I_L^T.$$

For the reverse inclusion, we consider the action of $\tau_{-\gamma_i}$, the inverse of $\tau_{\gamma_i}$, on $I_L^T$.
First, we note that $\tau_{-\gamma_i}(R(j,k,r,m|t)) = R(j,k,r,m|t)$ if $j\neq i$ and $k \neq i$.
Next, if $i\neq j$, we have
\begin{align*}
\tau_{-\gamma_i}\left(R(j,i,r,m|t)\right)&=\sum_{\substack{n_1+n_2=-t\\n_1\in Z^-_j, n_2\in Z^-_i \\n_1,n_2<0}}\eta_{L_j}^{rn_1L_j}\binom{-n_1-\frac{\left< \a^{(j)}, \a^{(j)} \right>}{2}}{m-1}x_{\a^{(j)}}^{\tnu}\left(n_1\right)x_{\a^{(i)}}^{\tnu}(n_2-\frac{1}{l_i})\\
&=\sum_{\substack{n_1+n_2=-t-\frac{1}{l_i}\\n_1\in Z^-_j, n_2\in Z^-_i\\n_1,n_2<0}}\eta_{L_j}^{rn_1L_j}\binom{-n_1-\frac{\left< \a^{(j)}, \a^{(j)} \right>}{2}}{m-1}x_{\a^{(j)}}^{\tnu}\left(n_1\right)x_{\a^{(i)}}^{\tnu}(n_2)\\
& \indent + cx_{\a^{(j)}}^{\tnu}\left(-t+\frac{\left< \alpha^{(j)}_{(0)},\alpha^{(j)}_{(0)} \right>}{2}-\frac{1}{l_i}\right)x_{\a^{(i)}}^{\tnu}\left(-\frac{\left< \alpha^{(i)}_{(0)},\alpha^{(i)}_{(0)} \right>}{2} \right)\\
&=R\left(j,i,r,m|t+\frac{1}{l_i}\right) + cx_{\a^{(j)}}^{\tnu}\left(-t+\frac{\left< \alpha^{(i)}_{(0)},\alpha^{(i)}_{(0)} \right>}{2}-\frac{1}{l_i}\right)x_{\a^{(i)}}^{\tnu}\left(-\frac{\left< \alpha^{(i)}_{(0)},\alpha^{(i)}_{(0)} \right>}{2} \right)
\end{align*}
for some constant $c\in \mathbb{C}$, and thus $ \tau_{-\gamma_i}\left(R(j,i,r,m|t)\right) \in I_L^T+U_L^Tx_{\a^{(i)}}^{\tnu}\left(-\frac{\left< \alpha^{(i)}_{(0)},\alpha^{(i)}_{(0)} \right>}{2}\right)$.
We also have
\begin{align*}
\tau_{-\gamma_i}\left(R(i,j,r,m|t)\right)&=\sum_{\substack{n_1+n_2=-t\\n_1\in Z^-_i, n_2\in Z^-_j \\n_1,n_2<0}}\eta_{L_i}^{rn_1L_i}\binom{-n_1-\frac{\left< \a^{(i)}, \a^{(i)} \right>}{2}}{m-1}x_{\a^{(i)}}^{\tnu}\left(n_1-\frac{1}{l_i}\right)x_{\a^{(j)}}^{\tnu}(n_2)\\
&=\eta_{L_i}^{\frac{rL_i}{l_i}}\sum_{\substack{n_1+n_2=-t-\frac{1}{l_i}\\n_1\in Z^-_i , n_2\in Z^-_j \\n_1< - \frac{1}{l_i},n_2<0}}\eta_{L_i}^{rn_1L_i}\binom{-n_1-\frac{\left< \a^{(i)}, \a^{(i)} \right>}{2}-\frac{1}{l_i}}{m-1}x_{\a^{(i)}}^{\tnu}\left(n_1\right)x_{\a^{(j)}}^{\tnu}(n_2),\end{align*}
Now using
$$\binom{-n_1-\frac{\left< \a^{(i)}, \a^{(i)} \right>}{2}-\frac{1}{l_i}}{m-1}=\sum_{k=0}^{m-1}\binom{-\frac{1}{l_i}}{k}\binom{-n_1-\frac{\left< \a^{(i)}, \a^{(i)} \right>}{2}}{m-1-k}$$
we have 
\begin{eqnarray*}
\lefteqn{\tau_{-\gamma_i}\left(R(i,j,r,m|t)\right)}\\
&=&\eta_{L_i}^{\frac{rL_i}{l_i}}\sum_{k=0}^{m-1}\binom{-\frac{1}{l_i}}{k}R\left(i,j,r,m-k|t+\frac{1}{l_i}\right) + c x_{\a^{(j)}}^{\tnu}\left(-t-\frac{1}{l_i}-\frac{\left< \alpha^{(i)}_{(0)},\alpha^{(i)}_{(0)} \right>}{2}\right)x_{\a^{(i)}}^{\tnu}\left(-\frac{\left< \alpha^{(i)}_{(0)},\alpha^{(i)}_{(0)} \right>}{2}\right)\\
&\in& I_L^T+U_L^Tx_{\a^{(i)}}^{\tnu}\left(-\frac{\left< \alpha^{(i)}_{(0)},\alpha^{(i)}_{(0)} \right>}{2}\right)
\end{eqnarray*}
for some constant $c\in \mathbb{C}$. Finally, we note that the case where $i=j$ is similar to the $i=j$ case above, and that $\tau_{\gamma_i}(U_L^{T+}) \subset I_L^T+U_L^Tx_{\a^{(i)}}^{\tnu}\left(-\frac{\left< \alpha^{(i)}_{(0)},\alpha^{(i)}_{(0)} \right>}{2}\right)$ is clear. Since $I_L^T$ is the left ideal generated by the above terms, we have the reverse inclusion
\be
\tau_{-\gamma_i}(I_L^T) \subset I_L^T+U_L^Tx_{\a^{(i)}}^{\tnu}\left(-\frac{\left< \alpha^{(i)}_{(0)},\alpha^{(i)}_{(0)} \right>}{2}\right)
\ee
or in other words
\be
I_L^T \subset \tau_{\gamma_i}\left(I_L^T+U_L^Tx_{\a^{(i)}}^{\tnu}\left(-\frac{\left< \alpha^{(i)}_{(0)},\alpha^{(i)}_{(0)} \right>}{2}\right)\right)
\ee
\end{proof}

We also define maps $\psi_{\gamma_j}:U_L^T\to U_L^T$ for $1\leq j\leq d$ by
\be\label{definepsi}
\psi_{\gamma_j}(a)=\tau_{-\a^{(j)}}(a)x_{\a^{(j)}}^{\tnu}\left(-\frac{\left<\a_{(0)}^{(j)},\a_{(0)}^{(j)}\right>}{2}\right)
\ee

\begin{lem}\label{lemma2} We have 
$$\tp{i}\left(I_L^T+U_L^T x_{\a^{(i)}}^{\tnu}\left(-\frac{\left<\a_{(0)}^{(i)},\a_{(0)}^{(i)}\right>}{2}\right)\right)\subset I_L^T.$$
\end{lem}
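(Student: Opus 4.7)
The plan is to reduce the conclusion to the inclusion $\psi_{\gamma_i}(I_L^T)\subset I_L^T$ and then verify this on a generating set of $I_L^T$. Throughout write $a_p = \frac{\left<\alpha^{(p)}_{(0)},\alpha^{(p)}_{(0)}\right>}{2}$. Because $\tau_{-\alpha^{(i)}}$ is a ring endomorphism of $U_L^T$, the map $\psi_{\gamma_i}$ satisfies the twisted Leibniz-style identity $\psi_{\gamma_i}(fg)=\tau_{-\alpha^{(i)}}(f)\,\psi_{\gamma_i}(g)$. Applying $\psi_{\gamma_i}$ to both sides of the equality $\tau_{\gamma_i}(I_L^T + U_L^T x^{\tnu}_{\alpha^{(i)}}(-a_i)) = I_L^T$ furnished by Lemma \ref{lemma1} gives
\[
\tp{i}\bigl(I_L^T + U_L^T x^{\tnu}_{\alpha^{(i)}}(-a_i)\bigr) = \psi_{\gamma_i}(I_L^T),
\]
so the lemma is equivalent to $\psi_{\gamma_i}(I_L^T)\subset I_L^T$. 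Because $I_L^T$ is a left ideal, it suffices to check $\psi_{\gamma_i}(g)\in I_L^T$ for each generator $g$ of $J_L^T + U_L^{T+}$.

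For $g=R(j,k,r,m|t)$ I would substitute the defining sum into $\psi_{\gamma_i}(g) = \tau_{-\alpha^{(i)}}(R(j,k,r,m|t))\,x^{\tnu}_{\alpha^{(i)}}(-a_i)$, apply the shift $n_1\mapsto n_1-\left<\alpha^{(j)}_{(0)},\alpha^{(i)}\right>$, $n_2\mapsto n_2-\left<\alpha^{(k)}_{(0)},\alpha^{(i)}\right>$, and reindex with primed variables summing to $-t'=-t-\left<\alpha^{(j)}_{(0)}+\alpha^{(k)}_{(0)},\alpha^{(i)}\right>$. The Vandermonde-style identity
\[
\binom{-n_1'-\tfrac{\left<\alpha^{(j)},\alpha^{(j)}\right>}{2}-\left<\alpha^{(j)}_{(0)},\alpha^{(i)}\right>}{m-1} = \sum_{s=0}^{m-1}\binom{-\left<\alpha^{(j)}_{(0)},\alpha^{(i)}\right>}{s}\binom{-n_1'-\tfrac{\left<\alpha^{(j)},\alpha^{(j)}\right>}{2}}{m-1-s}
\]
then writes $\tau_{-\alpha^{(i)}}(R(j,k,r,m|t))$ as a $\mathbb{C}$-linear combination of truncated sums modelled on the relations $R(j,k,r,m-s|t')$ but with the index range shrunk to $n_1'\le -a_j-\left<\alpha^{(j)}_{(0)},\alpha^{(i)}\right>$ and $n_2'\le -a_k-\left<\alpha^{(k)}_{(0)},\alpha^{(i)}\right>$. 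The difference between each truncated sum and the full $R(j,k,r,m-s|t')\in J_L^T$ is a finite sum of boundary monomials $x^{\tnu}_{\alpha^{(j)}}(n_1')x^{\tnu}_{\alpha^{(k)}}(n_2')$ with at least one mode lying in the top layer $(-a_\ast-\left<\alpha^{(\ast)}_{(0)},\alpha^{(i)}\right>,\,-a_\ast]$ of the appropriate $Z^-$; after multiplication by the surviving $x^{\tnu}_{\alpha^{(i)}}(-a_i)$ and use of commutativity to arrange adjacency, each such cubic monomial contains a pair of top-of-range factors placed in $I_L^T$ by Lemma \ref{NewRelations}.

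For $g=x^{\tnu}_{\alpha^{(p)}}(n)$ with $n\in Z_p^+$ one computes
\[
\psi_{\gamma_i}(g)=x^{\tnu}_{\alpha^{(p)}}\!\bigl(n-\left<\alpha^{(p)}_{(0)},\alpha^{(i)}\right>\bigr)\,x^{\tnu}_{\alpha^{(i)}}(-a_i).
\]
If $n-\left<\alpha^{(p)}_{(0)},\alpha^{(i)}\right>>-a_p$ the first factor lies in $U_L^{T+}\subset I_L^T$. Otherwise, writing $n=-a_p+s/L_p$ with $s\ge 1$ one finds $n-\left<\alpha^{(p)}_{(0)},\alpha^{(i)}\right>=-a_p-\sigma/L_p$ for a non-negative integer $\sigma\le L_p\left<\alpha^{(p)}_{(0)},\alpha^{(i)}\right>-1$; commuting the factors and applying Lemma \ref{NewRelations} with the indices of that lemma specialised to $(p,i)$ (with free parameters $s=0$ and $t$ matching $\sigma$, recalling that indices outside $Z_p$ give a vanishing operator by (\ref{ExtraTerms})) places the quadratic in $I_L^T$.

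The main obstacle will be the combinatorial bookkeeping in the $R$-case: precisely enumerating the boundary-correction monomials produced by the truncated-sum decomposition, and checking that for each of them the adjacent pair of top-of-$Z^-$ factors satisfies the sum bound $s+t\le l_*\left<\alpha^{(*)}_{(0)},\alpha^{(*)}\right>-1$ demanded by Lemma \ref{NewRelations}. The extra $\eta_{L_j}$-roots of unity produced by the reindexing are nonzero constants that contribute only overall scalars and are irrelevant to ideal membership, so they can be carried through the argument without affecting the conclusion.
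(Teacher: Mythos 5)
Your proposal is correct and follows essentially the same route as the paper: reduce via Lemma \ref{lemma1} to the statement $\psi_{\gamma_i}(I_L^T)\subset I_L^T$, check it on the generators $R(j,k,r,m|t)$ and $U_L^{T+}$ by identifying the boundary quadratics produced by the shift, and dispatch those boundary terms with Lemma \ref{NewRelations}. (The paper streamlines your boundary-term bookkeeping in the $R$-case by using the iterated consequence $\tau_{-\gamma_i}^s(I_L^T)\subset I_L^T+\sum_{r=0}^{s-1}U_L^T\,x_{\a^{(i)}}^{\tnu}(-a_i-r/l_i)$ of Lemma \ref{lemma1}, and by noting $\psi_{\gamma_i}(x_{\a^{(j)}}^{\tnu}(m))=\tau_{-\gamma_j}^{l_j\langle\a^{(i)}_{(0)},\a^{(j)}_{(0)}\rangle}(x_{\a^{(j)}}^{\tnu}(m))\,x_{\a^{(i)}}^{\tnu}(-a_i)$, rather than redoing the Vandermonde manipulation, but the content is the same.) One small slip in the $U_L^{T+}$ case: with Lemma \ref{NewRelations} specialised to $(p,i)$, the mode of the first factor is $-a_p-s/l_p$ and of the second $-a_i-t/l_p$, so you want $t=0$ and $s$ equal to the (integer) $l_p\sigma/L_p$, not the other way round; also the natural parameter denominator throughout is $l_p$ rather than $L_p$, with the non-evenness case handled because $\sigma$ is then automatically even.
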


\begin{proof}

In light of Lemma \ref{lemma1}, it suffices to show that 
$$\psi_{\gamma_i}(I_L^T) \subset I_L^T$$.

We first observe that, by repeated applications of Lemma \ref{lemma1}, we have that
\begin{align}
\tau_{-\gamma_i}^s(I_L^T) \subset I_L^T + \sum_{r=0}^{s-1} U_L^Tx_{\a^{(i)}}^{\tnu}\left(-\frac{\left<\a_{(0)}^{(i)},\a_{(0)}^{(i)}\right>}{2}-\frac{r}{l_i}\right)
\end{align}
for any $s \ge 1$. We also note that
\begin{align}
\psi_{\gamma_i}\left( x_{\alpha^{(j)}}^{\tnu}\left({m}\right)\right) = \tau_{-\gamma_j}^{l_j\left< \alpha^{(i)}_{(0)},\alpha^{(j)}_{(0)} \right>} \left( x_{\alpha^{(j)}}^{\tnu}\left({m}\right)\right) x_{\alpha^{(i)}}^{\tnu}\left(-\frac{\left<\a_{(0)}^{(i)},\a_{(0)}^{(i)}\right>}{2}\right)
\end{align}
It suffices to analyze the effect of $\psi_{\gamma_i}$ on the generators of $I_L^T$, namely each $R(j,k,r,m|t)$ and $U_L^{T+}$. First, in the case that $j \neq k$, we note that:
\begin{eqnarray*}
\lefteqn{\psi_{\gamma_i}(R(j,k,r,m|t))}\\
&=& \tau_{-\gamma_j}^{l_j\left< \alpha^{(i)}_{(0)},\alpha^{(j)}_{(0)}\right>}\tau_{-\gamma_k}^{l_k\left< \alpha^{(i)}_{(0)},\alpha^{(k)}_{(0)}\right>}(R(j,k,r,m|t))x_{\alpha^{(i)}}^{\tnu}\left(-\frac{\left<\a_{(0)}^{(i)},\a_{(0)}^{(i)}\right>}{2}\right)\\
&\in& \tau_{-\gamma_j}^{l_j\left< \alpha^{(i)}_{(0)},\alpha^{(j)}_{(0)}\right>}(I_L^T + \sum_{r=0}^{{l_k\left< \alpha^{(i)}_{(0)},\alpha^{(k)}_{(0)}\right>}-1} U_L^Tx_{\a^{(k)}}^{\tnu}\left(-\frac{\left<\a_{(0)}^{(k)},\a_{(0)}^{(k)}\right>}{2}-\frac{r}{l_k}\right)x_{\alpha^{(i)}}^{\tnu}\left(-\frac{\left<\a_{(0)}^{(i)},\a_{(0)}^{(i)}\right>}{2}\right)\\
&\subset& \left( I_L^T + \sum_{r=0}^{{l_j\left< \alpha^{(i)}_{(0)},\alpha^{(j)}_{(0)}\right>}-1} U_L^Tx_{\a^{(j)}}^{\tnu}\left(-\frac{\left<\a_{(0)}^{(j)},\a_{(0)}^{(j)}\right>}{2}-\frac{r}{l_j}\right) \right)x_{\alpha^{(i)}}^{\tnu}\left(-\frac{\left<\a_{(0)}^{(i)},\a_{(0)}^{(i)}\right>}{2}\right)\\
&&+\left( \sum_{r=0}^{{l_k\left< \alpha^{(i)}_{(0)},\alpha^{(k)}_{(0)}\right>}-1} U_L^Tx_{\a^{(k)}}^{\tnu}\left(-\frac{\left<\a_{(0)}^{(k)},\a_{(0)}^{(k)}\right>}{2}-\frac{r}{l_k}\right) \right)x_{\alpha^{(i)}}^{\tnu}\left(-\frac{\left<\a_{(0)}^{(i)},\a_{(0)}^{(i)}\right>}{2}\right).
\end{eqnarray*}
Similarly, if $j=k$, we have
\begin{align}
\psi_{\gamma_i}(R(j,j,r,m|t)) \in \left( I_L^T + \sum_{r=0}^{{l_j\left< \alpha^{(i)}_{(0)},\alpha^{(j)}_{(0)}\right>}-1} U_L^Tx_{\a^{(j)}}^{\tnu}\left(-\frac{\left<\a_{(0)}^{(j)},\a_{(0)}^{(j)}\right>}{2}-\frac{r}{l_j}\right) \right)x_{\alpha^{(i)}}^{\tnu}\left(-\frac{\left<\a_{(0)}^{(i)},\a_{(0)}^{(i)}\right>}{2}\right).
\end{align}
In both cases, we apply Lemma \ref{NewRelations} and have that
\begin{align}
x_{\a^{(j)}}^{\tnu}\left(-\frac{\left<\a_{(0)}^{(j)},\a_{(0)}^{(j)}\right>}{2}-\frac{r}{l_j}\right)x_{\alpha^{(i)}}^{\tnu}\left(-\frac{\left<\a_{(0)}^{(i)},\a_{(0)}^{(i)}\right>}{2}\right) \in I_L^T
\end{align}
and
\begin{align}
x_{\a^{(j)}}^{\tnu}\left(-\frac{\left<\a_{(0)}^{(k)},\a_{(0)}^{(k)}\right>}{2}-\frac{r}{l_j}\right)x_{\alpha^{(i)}}^{\tnu}\left(-\frac{\left<\a_{(0)}^{(i)},\a_{(0)}^{(i)}\right>}{2}\right) \in I_L^T
\end{align}
for each $0 \le r \le l_j\left< (\alpha^{(i)})_{(0)},(\alpha^{(j)})_{(0)}\right> -1$.
Finally, we note that $\psi_{\gamma_i}(U_L^{T+}) \subset I_L^T$ also follows immediately from Lemma \ref{NewRelations}.

\end{proof}

Now consider the maps 
\be
e_{\a^{(i)}}: V_L^T\to V_L^T\ee
and their restrictions to $W_L^T\subset V_L^T$ where 
\be 
e_{\a^{(i)}}\cdot 1_T= k^{-\left< \a^{(i)},\a^{(i)}\right> /2} \sigma(\a^{(i)}) \left(e^{\a^{(i)}}\right)_{-\frac{\left<\a^{(i)}_{(0)},\a^{(i)}_{(0)}\right>}{2}}^{\hnu}\cdot 1_T 
\ee
where $\sigma$ is defined in (\ref{constant}), and 
\be
e_{\a^{(i)}}(e^{\beta})^{\hnu}_n=C(\a^{(i)},\beta)(e^{\beta})^{\hnu}_{n-\left<\beta_{(0)},\a^{(i)}\right>}e_{\a^{(i)}}\cdot 1_T\ee

Now lift this to a map $\widehat{e_{\a^{(i)}}}:U_L^T\to U_L^T$ so that 
\be
\begin{tikzcd}
U_L^T \arrow{r}{\widehat{e_{\a^{(i)}}}} \arrow[swap]{d}{f_L^T} & U_L^T \arrow{d}{f_L^T} \\
W_L^T  \arrow{r}{e_{\a^{(i)}}} & W_L^T
\end{tikzcd}
\ee
is a commutative diagram. Observe that
\be 
\widehat{e_{\a^{(i)}}}\cdot 1=k^{-\left< \a^{(i)},\a^{(i)}\right> /2} \sigma(\a^{(i)}) x_{\a^{(i)}}^{\tnu}\left(-\frac{\left<\a^{(i)}_{(0)},\a^{(i)}_{(0)}\right>}{2}\right)\cdot 1
\ee
and 
\be
\widehat{e_{\a^{(i)}}}x_{\a^{(j)}}^{\tnu}(n)=C(\a^{(i)},\a^{(j)})x_{\a^{(j)}}^{\tnu}\left(n-\left<(\a^{(j)})_{(0)},\a^{(i)}\right>\right)\widehat{e_{\a^{(i)}}}\ee

Recall that we have a basis dual to the $\ZZ$-basis with respect to $\left<\cdot,\cdot\right>$, of $L$ given by $\{\lambda_i |1\leq i\leq D\}$. For $\lambda\in\{\lambda^{(i)} |1\leq i\leq d\}$, we now introduce the map 
 \be
 \Delta^{T}(\lambda^{(i)},-x)=(\prod_{j=0}^{l_i-1} (-\eta_{l_i}^j)^{-\nu^j \lambda^{(i)}})x^{\lambda^{(i)}_{(0)}}E^{+}(-\lambda^{(i)},x).
 \ee
 This is a twisted version of the $\Delta$-map in \cite{Li}.  We will make use the constant term of $\Delta^T(\lambda^{(i)},-x)$ denoted by $\Delta_c^T(\lambda^{(i)},-x)$. We now explore the action of $\Delta_c^T(\lambda^{(i)},-x)$ on the operators $x_\alpha(m)$ where $\alpha\in\Delta_+$.\\
 For notational convenience, we write
\be
a \cdot 1_T = f_L^T(a)
\ee
for $a \in U_L^T$.
Using computations similar to those found in \cite{CalLM4}, \cite{CalMPe}, and \cite{PS1}-\cite{PS2}, we have:
 \begin{equation}\begin{aligned}
 \Delta_c^T(\lambda^{(i)},-x)(\ea{}{m}\cdot 1)&=\text{Coeff}_{x^0x_1^{-m-1}}\left(\Delta^T(\lambda^{(i)},-x)Y^{\hnu}(e^{\a},x_1)\right)\cdot1_T\\
 &=\text{Coeff}_{x_1^{\left<\lambda^{(i)},\a_{(0)}\right>-m-1}}Y^{\hnu}(e^{\a},x_1)\cdot 1_T\\
 &=x^{\tnu}_\alpha\left(m+\left<\lambda^{(i)},\a_{(0)}\right>\right)\cdot 1_T\\
 &=\ta{i}(x^{\tnu}_\alpha(m))\cdot 1_T,
\end{aligned}\end{equation}
  \color{black}
  It follows that we have linear maps
 \begin{equation}\label{DeltaProperty}\begin{aligned}
 \Delta_c^T(\lambda^{(i)},-x):W_L^T&\to W_L^T\\
 a\cdot 1_T&\mapsto \ta{i}(a)\cdot 1_T,
 \end{aligned}\end{equation}
 where $a\in U_L^T$.
We note here that the map $\ta{i}:U_L^T \rightarrow U_L^T$ is a lifting of the map $\Delta_c^T(\lambda^{(i)},-x)$ so that
\be
\begin{tikzcd}
U_L^T \arrow{rr}{\ta{i}} \arrow[swap]{d}{f_L^T} && U_L^T \arrow{d}{f_L^T} \\
W_L^T  \arrow{rr}{\Delta_c^T(\lambda^{(i)},-x)} && W_L^T
\end{tikzcd}
\ee
is a commutative diagram.

\subsection{A presentation for $W_L^T$}~\\

Now that we have developed the necessary tools, we are ready to  prove that $W_L^T$ has the expected presentation (cf. \cite{CalLM1}-\cite{CalLM4}, \cite{CalMPe}, \cite{PS1}-\cite{PS2} for the analogous presentations). 

\begin{thm}\label{presentation} We have
$$\text{Ker }f_L^T = I_L^T.$$

\end{thm}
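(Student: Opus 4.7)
\emph{Proof plan.} The inclusion $I_L^T\subseteq\ker f_L^T$ is the routine direction. By (\ref{derivativerelations2}) each generating relation $R(i,j,r,m|t)$ annihilates every vector in $V_L^T$, and by (\ref{zeros}) every generator of $U_L^{T+}$ kills $1_T$; since the operators $(e^{\alpha^{(i)}})^{\hnu}_n$ pairwise commute, these annihilations propagate to all of $U_L^T\cdot R(i,j,r,m|t)$ and $U_L^T\cdot U_L^{T+}$.

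For the reverse inclusion I would argue by induction on the total charge $|\mathbf{m}|=m_1+\cdots+m_d$ of a homogeneous element, following the strategy of \cite{CalLM4}, \cite{CalMPe}, and \cite{PS1}-\cite{PS2}. The base case $\mathbf{m}=\mathbf{0}$ is immediate, since such an $a$ is a scalar multiple of $1$ and $f_L^T(1)=1_T\neq 0$. For the inductive step, fix homogeneous $a\in\ker f_L^T$ of charge $\mathbf{m}$ with $|\mathbf{m}|\geq 1$ and choose an index $i$ with $m_i>0$. The heart of the argument is a reduction claim: modulo $I_L^T$, the element $a$ can be rewritten in the form $\psi_{\gamma_i}(b)$ for some $b\in U_L^T$ of charge $\mathbf{m}-\mathbf{e}_i$.

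Granting the reduction claim, the induction closes as follows. Using the commutative diagrams of Subsection 3.3 that relate $\tau_{-\gamma_i}$ with $\Delta^T_c(-\lambda^{(i)},-x)$, together with (\ref{highestop}), a direct computation yields $f_L^T(\psi_{\gamma_i}(b))=c\cdot e_{\alpha^{(i)}}\,\Delta^T_c(-\lambda^{(i)},-x)\,f_L^T(b)$ for some nonzero constant $c$. Because $e_{\alpha^{(i)}}$ acts by translation on the $\CC[L/N]$-factor of $V_L^T$, and $\Delta^T_c(-\lambda^{(i)},-x)$ restricts to a nonzero scalar on each charge component, both maps are injective on the relevant graded pieces. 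Hence $f_L^T(a)=0$ forces $f_L^T(b)=0$; by the inductive hypothesis $b\in I_L^T$, and Lemma \ref{lemma2} then yields $\psi_{\gamma_i}(b)\in I_L^T$, so $a\in I_L^T$.

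The main obstacle is establishing the reduction claim. Any monomial of charge $\mathbf{m}$ with $m_i>0$ can, by the commutativity of the generators of $U_L^T$, be arranged to have an $\alpha^{(i)}$-factor on the right; what is nontrivial is lowering the mode of that factor all the way down to $-\langle\alpha^{(i)}_{(0)},\alpha^{(i)}_{(0)}\rangle/2$ modulo $I_L^T$. In the untwisted setting this step is automatic, but in the twisted setting the tower of fractional modes in $Z_i^-$ strictly between $0$ and the minimal mode produces genuine obstructions. Lemma \ref{NewRelations} --- whose proof rests on invertibility of the generalized Pascal matrix of the Appendix --- supplies exactly the identities needed to replace such low-mode products, modulo $I_L^T$, by products involving the target minimal mode, while Lemma \ref{lemma1} guarantees that the residual binomial coefficients produced when $\tau_{\pm\gamma_i}$ is commuted past an $R(i,j,r,m|t)$ recombine (via a Vandermonde-type identity) back into elements of $I_L^T$. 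Orchestrating these two tools through the induction, and carefully tracking the $\eta_{L_i}$-phases arising from Lemma \ref{limitlemma1}, is the technically delicate part of the proof.
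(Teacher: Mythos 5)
Your top-level scaffolding matches the paper --- induction on total charge, reduce modulo $I_L^T$ to the form $\psi_{\gamma_i}(b)$, pass through the injectivity of $e_{\alpha^{(i)}}$, then invoke the inductive hypothesis and Lemma \ref{lemma2}. But the ``reduction claim'' is precisely the hard step, and your proposed route to it does not go through; the paper's proof of it rests on a second minimization (by $L^{\hnu}(0)$-weight) that your plan omits entirely. The paper shows that a charge-minimal, \emph{weight-minimal} counterexample $a$ must lie in $I_L^T + U_L^T x^{\tnu}_{\alpha^{(i)}}\!\left(-\tfrac{\left<\alpha^{(i)}_{(0)},\alpha^{(i)}_{(0)}\right>}{2}\right)$ by contradiction through Lemma \ref{lemma1} and (\ref{DeltaProperty}): if not, then $\tau_{\gamma_i}(a)\notin I_L^T$ by Lemma \ref{lemma1}, yet $\tau_{\gamma_i}(a)$ is still in $\ker f_L^T$ (since $\Delta_c^T(\lambda^{(i)},-x)$ intertwines $f_L^T$ with $\tau_{\gamma_i}$) and has strictly smaller weight, contradicting the choice of $a$. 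You instead propose to prove the reduction directly by ``mode-lowering'' via Lemma \ref{NewRelations} and the generalized Pascal matrix. This misplaces those tools: Lemma \ref{NewRelations} only produces specific quadratic elements of $I_L^T$, and is used in the paper only inside the proof of Lemma \ref{lemma2} (to show $\psi_{\gamma_i}(I_L^T)\subset I_L^T$); it gives no mechanism to rewrite an arbitrary element of $\ker f_L^T$ so that each monomial carries the factor $x^{\tnu}_{\alpha^{(i)}}$ at its minimal mode. Without the weight-minimality argument (or a fully worked replacement) the reduction claim is unproved, and the induction does not close.

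There are also two local errors in the step you treat as routine. The asserted identity $f_L^T(\psi_{\gamma_i}(b)) = c\cdot e_{\alpha^{(i)}}\,\Delta_c^T(-\lambda^{(i)},-x)\,f_L^T(b)$ is wrong: by the commutative diagram for $\widehat{e_{\alpha^{(i)}}}$, on a charge-homogeneous $b$ the element $\psi_{\gamma_i}(b)$ is a nonzero multiple of $\widehat{e_{\alpha^{(i)}}}(b)$, so $f_L^T(\psi_{\gamma_i}(b))$ is a nonzero multiple of $e_{\alpha^{(i)}}\bigl(f_L^T(b)\bigr)$ with no $\Delta$-factor (the paper uses exactly this to conclude $f_L^T(\tau_{\alpha^{(i)}}(c))=0$). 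Moreover, $\Delta_c^T(\lambda^{(i)},-x)$ is not a nonzero scalar on charge components: it implements $\tau_{\gamma_i}$, which shifts modes and lowers weight, and by Theorem \ref{exact} its kernel is all of $\mathrm{Im}\, e_{\alpha^{(i)}}$.
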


\begin{proof} The inclusion
\be
I_L^T  \subset \mathrm{Ker} f_L^T \ 
\ee
is trivial. The remainder of the proof will be for the reverse inclusion.
Suppose that $a \in \ker f_L^T \setminus I_L^T$. We may assume that $a$ is homogeneous with respect to all gradings and is a nonzero such element with smallest 
possible total charge. (Consequently, $a$ has a nonzero charge for some $(\lambda^{(i)})_{(0)}$.) Among all elements of smallest possible total charge, we may assume that $a$ is also of smallest $L^{\hnu} (0)$-weight.
We first show that
\begin{equation}
a \in I_L^T + U_L^T x_{\a^{(i)}}^{\tnu}\left(-\frac{\left<\a^{(i)}_{(0)},\a^{(i)}_{(0)}\right>}{2}\right)
\end{equation}
Suppose not. Then
 by Lemma \ref{lemma1} we have that 
\begin{equation}
\ta{i}(a)  \notin I_L^T .
\end{equation}
We also have that 
\begin{equation}
a \cdot 1_T  = 0,
\end{equation}
and so using (\ref{DeltaProperty}), we have
\begin{equation}
\ta{i}(a) \cdot 1_T = 0.
\end{equation}
But, since 
\begin{equation}
wt(\ta{i}(a)) < wt(a)
\end{equation}
we have $\ta{i}(a) \in I_L^T$, a contradiction, and so
\begin{equation}
a \in I_L^T +U_L^T x_{\a^{(i)}}^{\tnu}\left(-\frac{\left<\a^{(i)}_{(0)},\a^{(i)}_{(0)}\right>}{2}\right).
\end{equation}

Hence, we may assume that $a \in I_L^T +U_L^T x_{\a^{(i)}}^{\tnu}\left(-\frac{\left<\a^{(i)}_{(0)},\a^{(i)}_{(0)}\right>}{2}\right)$, so that
$$
a = b + cx_{\a^{(i)}}^{\tnu}\left(-\frac{\left<\a^{(i)}_{(0)},\a^{(i)}_{(0)}\right>}{2}\right)
$$
for some $b \in I_L^T$ and $c \in U_L^T$. Notice that $b$ and $cx_{\a^{(i)}}^{\tnu}\left(-\frac{\left<\a^{(i)}_{(0)},\a^{(i)}_{(0)}\right>}{2}\right)$ are of the same total $L^{\hnu}(0)$ weight and total charge as $a$.  We have that 
\begin{eqnarray*}
c x_{\a^{(i)}}^{\tnu}\left(-\frac{\left<\a^{(i)}_{(0)},\a^{(i)}_{(0)}\right>}{2}\right)\cdot 1_T = (a-b) \cdot 1_T =0,
\end{eqnarray*}
so that $cx_{\a^{(i)}}^{\tnu}\left(-\frac{\left<\a^{(i)}_{(0)},\a^{(i)}_{(0)}\right>}{2}\right) \in \ker f_L^T \setminus I_L^T $. We also have that 
\begin{equation} 
cx_{\a^{(i)}}^{\tnu}\left(-\frac{\left<\a^{(i)}_{(0)},\a^{(i)}_{(0)}\right>}{2}\right)\cdot 1_T = \widehat{e_{\alpha^{(i)}}}(\tau_{\alpha^{(i)}}(c)) \cdot 1_T = e_{\alpha^{(i)}}(f_L^T(\tau_{\alpha^{(i)}}(c))) = 0
\end{equation}
so that, by the injectivity of ${e_{\alpha^{(i)}}}$, we have that 
\begin{equation}
f_L^T(\tau_{\alpha^{(i)}}(c))=\tau_{\alpha^{(i)}}(c) \cdot 1_T = 0.
\end{equation}
Since  $\tau_{\alpha^{(i)}}(c)$ has lower total charge than $a$, we have that 
\begin{equation}
\tau_{\alpha^{(i)}}(c) \in I_L^T.
\end{equation}
In view of Lemma \ref{lemma1}, we have that 
\begin{equation}
\tau_{-\gamma_i}(\tau_{\alpha^{(i)}}(c)) \in I_L^T + U_L^T x_{\a^{(i)}}^{\tnu}\left(-\frac{\left<\a^{(i)}_{(0)},\a^{(i)}_{(0)}\right>}{2}\right).
\end{equation}
Finally, applying Lemma \ref{lemma2} immediately gives
\begin{equation}
\psi_{\gamma_i}\tau_{\gamma_i}\tau_{-\gamma_i}(\tau_{\alpha^{(i)}}(c)) = cx_{\a^{(i)}}^{\tnu}\left(-\frac{\left<\a^{(i)}_{(0)},\a^{(i)}_{(0)}\right>}{2}\right) \in I_L^T.
\end{equation}
Thus, we have that 
\begin{equation}
a = b + cx_{\a^{(i)}}^{\tnu}\left(-\frac{\left<\a^{(i)}_{(0)},\a^{(i)}_{(0)}\right>}{2}\right) \in I_L^T,
\end{equation}
which is a contradiction, completing our proof.\\
\end{proof}

\subsection{Exact Sequences and Graded Dimensions}~

 \begin{thm}\label{exact}For each  $i=1,\dots,d$, we have the following short exact sequences
\begin{equation}
0\to W_L^T \xrightarrow{e_{\alpha^{(i)}}}W_L^T \xrightarrow{\Delta_c^T(\lambda^{(i)},-x)}W_L^T\to 0
\end{equation}

\end{thm}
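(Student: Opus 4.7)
The plan is to prove each of the three exactness conditions separately by lifting to $U_L^T$ via $f_L^T$ and invoking the transport properties of $\tau_{\gamma_i}$ and $\psi_{\gamma_i}$ already established in Lemmas \ref{lemma1} and \ref{lemma2} together with the presentation Theorem \ref{presentation}.

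First, injectivity of $e_{\alpha^{(i)}}$ is essentially immediate from the decomposition $V_L^T \cong S(\hat{\mathfrak{h}}[\nu]^-) \otimes \mathbb{C}[L/N]$: on the second tensor factor, $e_{\alpha^{(i)}}$ acts by left multiplication by an element of $\hat{L}_\nu$, which is a bijection of $\mathbb{C}[L/N]$. Hence $e_{\alpha^{(i)}}$ is injective on $V_L^T$, and in particular on $W_L^T$. Surjectivity of $\Delta_c^T(\lambda^{(i)},-x)$ is equally quick: given $v = f_L^T(a)$, the commutative diagram identifying $\Delta_c^T(\lambda^{(i)},-x)$ with $\tau_{\gamma_i}$ yields
$$\Delta_c^T(\lambda^{(i)},-x)(f_L^T(\tau_{-\gamma_i}(a))) = f_L^T(\tau_{\gamma_i}\tau_{-\gamma_i}(a)) = v.$$

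To show $\Delta_c^T(\lambda^{(i)},-x) \circ e_{\alpha^{(i)}} = 0$, I would lift to $U_L^T$ and evaluate $\tau_{\gamma_i} \circ \widehat{e_{\alpha^{(i)}}}$ on a charge-homogeneous element. The defining formulas for $\widehat{e_{\alpha^{(i)}}}$ show that on such an element $a$ it equals a nonzero scalar multiple of $\psi_{\gamma_i}(a) = \tau_{-\alpha^{(i)}}(a) \cdot x_{\alpha^{(i)}}^{\tnu}(-\langle\alpha^{(i)}_{(0)},\alpha^{(i)}_{(0)}\rangle/2)$. Applying $\tau_{\gamma_i}$ then produces the factor $x_{\alpha^{(i)}}^{\tnu}(-\langle\alpha^{(i)}_{(0)},\alpha^{(i)}_{(0)}\rangle/2 + 1/l_i)$, which lies in $U_L^{T+} \subset I_L^T$, so the composition vanishes.

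The main content is the reverse inclusion $\ker \Delta_c^T(\lambda^{(i)},-x) \subset \mathrm{Im}(e_{\alpha^{(i)}})$. Given $v = f_L^T(a)$ in this kernel, I would argue $\tau_{\gamma_i}(a) \in \ker f_L^T = I_L^T$ by Theorem \ref{presentation}, so the reverse containment inside Lemma \ref{lemma1} places $a$ in $I_L^T + U_L^T x_{\alpha^{(i)}}^{\tnu}(-\langle\alpha^{(i)}_{(0)},\alpha^{(i)}_{(0)}\rangle/2)$. Writing $a = b + c \cdot x_{\alpha^{(i)}}^{\tnu}(-\langle\alpha^{(i)}_{(0)},\alpha^{(i)}_{(0)}\rangle/2)$ with $b \in I_L^T$, I recognize the second summand as $\psi_{\gamma_i}(\tau_{\alpha^{(i)}}(c))$, which in turn is proportional to $\widehat{e_{\alpha^{(i)}}}(\tau_{\alpha^{(i)}}(c))$ on the relevant charge component. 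This exhibits $v$ as $e_{\alpha^{(i)}}$ applied to $f_L^T(\tau_{\alpha^{(i)}}(c))$ up to a nonzero scalar, completing the argument. The principal obstacle is the appeal to the reverse direction of Lemma \ref{lemma1}, which in turn rests on Theorem \ref{presentation} and on the combinatorial identity Lemma \ref{NewRelations} proved via the generalized Pascal matrix in the Appendix.
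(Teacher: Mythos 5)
Your proposal is correct and follows essentially the same route as the paper's proof: injectivity of $e_{\a^{(i)}}$ and surjectivity of $\Delta_c^T(\lambda^{(i)},-x)$ are immediate, and exactness at the middle is reduced, via the commutative square identifying $\Delta_c^T(\lambda^{(i)},-x)$ with $\ta{i}$ and Theorem~\ref{presentation}, to the equality in Lemma~\ref{lemma1} characterizing $\ta{i}^{-1}(I_L^T)$ as $I_L^T + U_L^T\xa{i}{-\frac{\left<\a^{(i)}_{(0)},\a^{(i)}_{(0)}\right>}{2}}$. The only cosmetic difference is that you verify the two inclusions $\mathrm{Im}\, e_{\a^{(i)}} \subset \ker \Delta_c^T$ and $\ker \Delta_c^T \subset \mathrm{Im}\, e_{\a^{(i)}}$ separately, whereas the paper shows both conditions on $w = a\cdot 1_T$ are equivalent to the single lifted condition $a\in I_L^T+U_L^T\xa{i}{-\frac{\left<\a^{(i)}_{(0)},\a^{(i)}_{(0)}\right>}{2}}$.
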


\begin{proof}
It is clear that each $e_{\a^{(i)}}$ is injective and $\Delta_c^T(\lambda^{(i)},-x)$ is surjective. Let $w=a\cdot 1\in\text{ker }\Delta_c^T(\lambda^{(i)},-x)$ for some $a\in U_L^T$. So we have
\be
0=\Delta_c^T(\lambda^{(i)}, -x)w=\ta{i}(a)\cdot 1_T,
\ee 
and thus $\ta{i}(a)\in I_{\Lambda}$ by Theorem \ref{presentation}. This implies that 
\be
a\in I_L^T+U_L^T\xa{i}{-\frac{\left<\a^{(i)}_{(0)},\a^{(i)}_{(0)}\right>}{2}}
\ee
 by Lemma \ref{lemma1}. So we have the condition that $w=a\cdot 1\in\text{ker }\Delta_c^T(\lambda^{(i)},-x)$ is equivalent to the condition that $a\in I_L^T+U_L^T\xa{i}{-\frac{\left<\a^{(i)}_{(0)},\a^{(i)}_{(0)}\right>}{2}}$.

Now suppose $w=a\cdot 1\in\text{Im }e_{\a^{(i)}}$ for $a\in U_L^T$. So we can write $$w=b\xa{i}{-\frac{\left<\a^{(i)}_{(0)},\a^{(i)}_{(0)}\right>}{2}}\cdot 1_T$$ for $b\in U_L^T$ and thus $$a\cdot 1_T=b\xa{i}{-\frac{\left<\a^{(i)}_{(0)},\a^{(i)}_{(0)}\right>}{2}}\cdot 1_T\in I_L^T+U_L^T\xa{i}{-\frac{\left<\a^{(i)}_{(0)},\a^{(i)}_{(0)}\right>}{2}}.$$ This implies that the condition that $w=a\cdot 1_T\in\text{Im}e_{\a^{(i)}}$ is equivalent to the condition that $a\in I_L^T+U_L^T\xa{i}{-\frac{\left<\a^{(i)}_{(0)},\a^{(i)}_{(0)}\right>}{2}}$. So we have
\be
w=a\cdot 1_T\in\text{ker }\Delta_c^T(\lambda^{(i)},-x) \text{ if and only if } w=a\cdot 1_T\in\text{Im }e_{\a^{(i)}},
\ee
and thus 
\be
\text{ker }\Delta_c^T(\lambda^{(i)},-x)=\text{Im }e_{\a^{(i)}}.
\ee

\end{proof}
As a consequence, we have the following corollary:
\begin{cor}\label{exact2} We have the following short exact sequences for $i=1,\dots,d$:
\begin{eqnarray*}
0 &\to& \left(W_L^T\right)_{(\mathbf{m}-\epsilon_i,n+k\frac{\left<\a_{(0)}^{(i)},\a_{(0)}^{(i)}\right>}{2} -k\sum_{j=1}^d m_j \left<\a^{(i)}_{(0)},\a^{(j)}_{(0)}\right>)}\\
&& \xrightarrow{e_{\alpha^{(i)}}}\left(W_L^T\right)_{(\mathbf{m},n)}
 \xrightarrow{\Delta_c^T(\lambda^{(i)},-x)}\left(W_L^T\right)_{(\mathbf{m},n-\frac{k}{l_i}m_i)}\to 0.
\end{eqnarray*}
Moreover, we have recursions for $i=1,\dots,d$ of the form:
\begin{eqnarray}\label{recursion}
\lefteqn{\hspace{1in}\chi'(\mathbf{x};q)}\\
 \nonumber&=& \chi'(x_1,\dots,x_{i-1},q^{\frac{k}{l_i}}x_i,x_{i+1},\dots,x_d;q)\\
\nonumber&&+ x_i q^{k\frac{\left<\a_{(0)}^{(i)},\a_{(0)}^{(i)}\right>}{2}} \chi'(q^{k\left<\a^{(1)}_{(0)},\a^{(i)}_{(0)}\right>}x_1,\dots,q^{k\left<\a^{(d)}_{(0)},\a^{(i)}_{(0)}\right>}x_d;q).
\end{eqnarray}
\end{cor}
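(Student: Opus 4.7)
The short exact sequences are an immediate restriction to graded components of the exact sequence established in Theorem \ref{exact}, once one verifies that both $e_{\alpha^{(i)}}$ and $\Delta_c^T(\lambda^{(i)},-x)$ are homogeneous with respect to both the weight grading and the charge grading, and computes the shifts precisely. First I would handle $\Delta_c^T(\lambda^{(i)},-x)$: since on the lifted side it corresponds to $\tau_{\gamma_i}$, which by (\ref{definetau}) sends $x_{\alpha^{(j)}}^{\tnu}(n)\mapsto x_{\alpha^{(j)}}^{\tnu}(n+\tfrac{1}{l_i}\delta_{ij})$, the charge is preserved and the unscaled weight of a charge-$\mathbf{m}$ monomial drops by $\tfrac{m_i}{l_i}$, i.e., the $q$-weight drops by $\tfrac{k m_i}{l_i}$. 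This accounts for the second arrow in the exact sequence.

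Next I would analyze $e_{\alpha^{(i)}}$. By definition it multiplies $1_T$ (up to constant) by the operator $(e^{\alpha^{(i)}})^{\hnu}_{-\left<\alpha^{(i)}_{(0)},\alpha^{(i)}_{(0)}\right>/2}$, and for any other component operator it acts via
\[
e_{\alpha^{(i)}}(e^{\beta})^{\hnu}_n = C(\alpha^{(i)},\beta)\,(e^{\beta})^{\hnu}_{n-\left<\beta_{(0)},\alpha^{(i)}\right>}\,e_{\alpha^{(i)}}.
\]
Thus applied to a monomial of charge $\mathbf{m}-\epsilon_i$ (the source), $e_{\alpha^{(i)}}$ increases the charge by $\epsilon_i$, and shifts the unscaled weight by
\[
\tfrac{\left<\alpha^{(i)}_{(0)},\alpha^{(i)}_{(0)}\right>}{2}+\sum_j (m_j-\delta_{ij})\left<(\alpha^{(j)})_{(0)},\alpha^{(i)}\right>.
\]
Using that $(\alpha^{(j)})_{(0)}\in\mathfrak{h}_{(0)}$ is orthogonal to $\alpha^{(i)}-(\alpha^{(i)})_{(0)}$, so that $\left<(\alpha^{(j)})_{(0)},\alpha^{(i)}\right>=\left<(\alpha^{(j)})_{(0)},(\alpha^{(i)})_{(0)}\right>$, this simplifies to $-\tfrac{\left<\alpha^{(i)}_{(0)},\alpha^{(i)}_{(0)}\right>}{2}+\sum_j m_j\left<\alpha^{(i)}_{(0)},\alpha^{(j)}_{(0)}\right>$. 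Multiplying by $k$ gives exactly the shift between the scaled weights in the displayed short exact sequence. Combined with the analogous computation for $\Delta_c^T$, this confirms the maps restrict to the claimed components; exactness follows from Theorem \ref{exact} on each bigraded piece.

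For the recursion, the short exact sequence yields
\[
\dim(W_L^T)_{(\mathbf{m},n)} = \dim(W_L^T)_{(\mathbf{m}-\epsilon_i,\,n+k\tfrac{\left<\alpha^{(i)}_{(0)},\alpha^{(i)}_{(0)}\right>}{2}-k\sum_j m_j\left<\alpha^{(i)}_{(0)},\alpha^{(j)}_{(0)}\right>)}+\dim(W_L^T)_{(\mathbf{m},\,n-\tfrac{k m_i}{l_i})}.
\]
I would then multiply both sides by $q^n x_1^{m_1}\cdots x_d^{m_d}$ and sum over $(\mathbf{m},n)$. The $\Delta_c^T$-term manifestly becomes $\chi'(x_1,\dots,q^{k/l_i}x_i,\dots,x_d;q)$ after the substitution $n\mapsto n-\tfrac{km_i}{l_i}$. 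For the $e_{\alpha^{(i)}}$-term the substitutions $m_j\mapsto m_j+\delta_{ij}$ and the corresponding shift of $n$ pull out a factor $x_i q^{k\left<\alpha^{(i)}_{(0)},\alpha^{(i)}_{(0)}\right>/2}$ and reweight each $x_j$ by $q^{k\left<\alpha^{(j)}_{(0)},\alpha^{(i)}_{(0)}\right>}$, producing the second term of (\ref{recursion}).

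The main obstacle is the bookkeeping in step two: carefully tracking the combined shift caused by the finitely many commutations through the monomial and the multiplication by $(e^{\alpha^{(i)}})^{\hnu}_{-\left<\alpha^{(i)}_{(0)},\alpha^{(i)}_{(0)}\right>/2}$, and rewriting the resulting inner products so that the answer is expressed purely in terms of the $\mathfrak{h}_{(0)}$-projections $\alpha^{(j)}_{(0)}$. Once this is done cleanly, the exact-sequence/generating-function passage is formal.
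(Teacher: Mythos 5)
Your argument is correct and is precisely the bookkeeping the paper leaves implicit when stating the corollary "as a consequence" of Theorem \ref{exact}: restrict the short exact sequence to graded components after checking that $\Delta_c^T(\lambda^{(i)},-x)$ (via $\tau_{\gamma_i}$) preserves charge and shifts the scaled weight by $\tfrac{k m_i}{l_i}$, and that $e_{\alpha^{(i)}}$ raises charge by $\epsilon_i$ and shifts weight by the claimed amount via commutation, using the $\langle\cdot,\cdot\rangle$-orthogonality of the $\nu$-eigenspaces to replace $\left<\alpha^{(j)}_{(0)},\alpha^{(i)}\right>$ by $\left<\alpha^{(j)}_{(0)},\alpha^{(i)}_{(0)}\right>$. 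The passage from the exact sequence to the character recursion by the substitutions $n\mapsto n-\tfrac{km_i}{l_i}$ and $\mathbf{m}\mapsto\mathbf{m}+\epsilon_i$ is likewise exactly what the paper intends.
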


Setting 
\begin{equation}
\chi^{'}(\mathbf{x};q) = \sum_{m_1,\dots,m_d \in \mathbb{Z}_{\ge 0}} A_{m_1,\dots,m_d}(q)x_1^{m_1}\dots x_d^{m_d},
\end{equation}
we have from (\ref{recursion}):
\begin{equation}
A_{m_1,\dots, m_i+1,\dots,m_d}(q) = A_{m_1,\dots , m_d}(q)\frac{q^{k\frac{\left<\a_{(0)}^{(i)},\a_{(0)}^{(i)}\right>}{2} + \sum_{j=1}^d km_j \left< \alpha_{(0)}^{(j)}, \alpha_{(0)}^{(i)} \right>}}{1-q^{\frac{k(m_i+1)}{l_i}}}
\end{equation}
Solving these recursions with $A_{0,\dots,0}(q) = 1$ and using the notation $(a;q)_n = (1-a)(1-aq)(1-aq^2)\dots (1-aq^{n-1})$ (cf. \cite{A}), we have:

\begin{cor}\label{charater-eqn}
We have
\begin{equation}
\chi^{'}(\mathbf{x};q) = \sum_{{\bf m} \in (\mathbb{Z}_{\ge 0}^d)}\frac{q^{\frac{{\bf m}^t A {\bf m}}{2}}}{(q^{\frac{k}{l_1}};q^{\frac{k}{l_1}})_{m_1} \cdots (q^{\frac{k}{l_d}};q^{\frac{k}{l_d}})_{m_d} }x_1^{m_1}\cdots x_d^{m_d}
\end{equation}
where $A$ is the $(d\times d)$-matrix defined by 
$$A_{i,j} = k\left< \alpha_{(0)}^{(i)}, \alpha_{(0)}^{(j)}\right>.$$
\end{cor}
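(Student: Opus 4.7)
The plan is to establish Corollary \ref{charater-eqn} by a straightforward induction on $|\mathbf{m}| = m_1 + \cdots + m_d$, using the recursion
\begin{equation*}
A_{m_1,\dots, m_i+1,\dots,m_d}(q) = A_{m_1,\dots , m_d}(q)\frac{q^{k\frac{\left<\a_{(0)}^{(i)},\a_{(0)}^{(i)}\right>}{2} + \sum_{j=1}^d km_j \left< \alpha_{(0)}^{(j)}, \alpha_{(0)}^{(i)} \right>}}{1-q^{\frac{k(m_i+1)}{l_i}}}
\end{equation*}
from the preceding paragraph, together with the initial condition $A_{0,\dots,0}(q) = 1$.

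First I would verify the base case: when $\mathbf{m} = \mathbf{0}$, the claimed formula gives $q^0/1 = 1$, matching $A_{0,\dots,0}(q) = 1$. For the inductive step, I assume the formula holds for $\mathbf{m}$ and I aim to verify it for $\mathbf{m} + \epsilon_i$, for every $1 \le i \le d$. This reduces to two independent checks: one for the $q$-exponent in the numerator (arising from the quadratic form $\tfrac{1}{2}\mathbf{m}^t A \mathbf{m}$), and one for the $q$-Pochhammer denominator.

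For the numerator, I would use the symmetry of $A$ (which follows from $A_{i,j}=k\langle\alpha_{(0)}^{(i)},\alpha_{(0)}^{(j)}\rangle$) to compute
\begin{equation*}
\tfrac{1}{2}(\mathbf{m}+\epsilon_i)^t A (\mathbf{m}+\epsilon_i) - \tfrac{1}{2}\mathbf{m}^t A \mathbf{m} = \sum_{j=1}^{d} A_{j,i}\,m_j + \tfrac{1}{2}A_{i,i} = \sum_{j=1}^{d} k\left<\alpha_{(0)}^{(j)},\alpha_{(0)}^{(i)}\right> m_j + \tfrac{k}{2}\left<\alpha_{(0)}^{(i)},\alpha_{(0)}^{(i)}\right>,
\end{equation*}
which matches the exponent appearing in the numerator of the recursion exactly. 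For the denominator, incrementing $m_i$ by $1$ changes $(q^{k/l_i};q^{k/l_i})_{m_i}$ to $(q^{k/l_i};q^{k/l_i})_{m_i+1} = (q^{k/l_i};q^{k/l_i})_{m_i}\bigl(1-q^{k(m_i+1)/l_i}\bigr)$, producing precisely the factor $1-q^{k(m_i+1)/l_i}$ appearing in the denominator of the recursion, while the other $q$-Pochhammer factors are unaffected.

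Since the recursion determines $A_{\mathbf{m}}(q)$ uniquely from the initial condition $A_{\mathbf{0}}(q) = 1$ by stepping in any coordinate direction, and since both checks above are compatible for every choice of $i$, the induction closes and the claimed closed form follows. I do not expect a real obstacle here: the recursions from Corollary \ref{exact2} already encode essentially all of the vertex-algebraic content, and what remains is a bookkeeping exercise in which the only nontrivial ingredient is the symmetry of the matrix $A$, which ensures that the ``cross-terms'' $\sum_j A_{j,i} m_j$ generated by incrementing $m_i$ agree with the linear part of the exponent produced by the recursion.
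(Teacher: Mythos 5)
Your proof is correct and it is exactly the verification the paper leaves implicit when it says "Solving these recursions with $A_{0,\dots,0}(q)=1$": you check the base case and show the proposed closed form satisfies each of the $d$ recursions, the only ingredient being the symmetry of $A$ (and of the bilinear form) for the cross-terms. No gaps.
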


\begin{example}
Consider a rank $n+1$ lattice whose Gram matrix is the $(n+1) \times (n+1)$ matrix 
\be 
X_{n+1} = 
\begin{pmatrix}
2 & 1 & 1  & \cdots & 1\\
1 & 2 & 0  & \cdots & 0\\
1 & 0 & 2  & \cdots & 0\\
\vdots & \vdots & \vdots & \ddots &  \vdots\\
1 & 0 & 0  & \cdots & 2
\end{pmatrix}
\ee
Define the automorphism $\nu: L \rightarrow L$ by
\begin{equation*}
\nu(\alpha_1) = \alpha_1,
\end{equation*}
\begin{equation*}
\nu(\alpha_i) = \alpha_{i+1} \mbox{ for } 2 \le i \le n,
\end{equation*}
\begin{equation*}
\nu(\alpha_{n+1}) = \alpha_2.
\end{equation*}
Then we have that $k=2n$, $\alpha^{(1)}_{(0)} = \alpha_1$, and $\alpha^{(2)}_{(0)} = \frac{1}{n} \left(\alpha_2 + \dots + \alpha_{n+1}\right)$ and the $2 \times 2$ matrix $A$ in Corollary \ref{charater-eqn} is 
\begin{equation}
A = 2n(\left< \alpha_i^{(0)},\alpha_j^{(0)} \right> )_{i,j=1}^2 = \begin{pmatrix}
4n & 2n\\
2n & 4
\end{pmatrix}
\end{equation}
and is nonsingular when $n \neq 4$.
It follows that, replacing $q$ with $q^{\frac{1}{2}}$, we have:
\begin{equation}
\chi'(x_1, x_2,q) = \sum_{m_1,m_2 \ge 0} \frac{q^{nm_1^2 + nm_1m_2 + m_2^2}}{(q^n;q^n)_{m_1}(q;q)_{m_2}}x_1^{m_1}x_2^{m_2}.
\end{equation}
We note that in the case that $n=2$, we have 
\begin{equation}
\chi'(1, 1,q) = \sum_{m_1,m_2 \ge 0} \frac{q^{2m_1^2 + 2m_1m_2 + m_2^2}}{(q^2;q^2)_{m_1}(q;q)_{m_2}},
\end{equation}
which can be interpreted as the generating function of partitions of $n$ in which no part appears more than twice and no two parts differ by 1 (cf. \cite{Br}).
In the case that $n=3$, we have 
\begin{equation}
\chi'(1, 1,q) = \sum_{m_1,m_2 \ge 0} \frac{q^{3m_1^2 + 3m_1m_2 + m_2^2}}{(q^3;q^3)_{m_1}(q;q)_{m_2}}.
\end{equation}
We note that this is the analytic sum-side for the Kanade-Russell conjecture $I_1$ (cf. \cite{KR}), found in \cite{Ku}. Namely, one form of the Kanade-Russell Conjecture $I_1$ is:
\begin{equation}
\sum_{m_1,m_2 \ge 0} \frac{q^{3m_1^2 + 3m_1m_2 + m_2^2}}{(q^3;q^3)_{m_1}(q;q)_{m_2}}  = \frac{1}{(q,q^3,q^6,q^9;q)_\infty},
\end{equation}
where 
\be 
(a_1,\dots, a_j;q)_\infty = (a_1;q)_\infty \cdots (a_j,q)_\infty.
\ee
\end{example}

\section{Appendix -- A generalized Pascal Matrix}

While the majority of our results have followed directly from the theory of vertex operator algebras and their twisted modules, the proof of Lemma \ref{lemma2} relies on a purely linear algebra result which we will detail in this section. 

Fix nonnegative integers $N_0,N_1,\dots, N_{k-1}$ and $N$ such that $N=N_0+\cdots+N_{k-1}$ and $\eta$, a primitive $k^{\text{th}}$ root of unity. Also, let $z\in \mathbb{C}$ be arbitrary, $w\in\mathbb{C}^{\times}$ and for $1\leq r\leq k$ consider the matrices 
\be
A(r)=\begin{pmatrix} \binom{z}{0} & \eta^r\binom{z+w}{0} & \eta^{2r}\binom{z+2w}{0} & \cdots & \eta^{(N-1)r}\binom{z+(N-1)w}{0} \\[7pt]
\binom{z}{1} & \eta^r\binom{z+w}{1} & \eta^{2r}\binom{z+2w}{1} & \cdots & \eta^{(N-1)r}\binom{z+(N-1)w}{1} \\[7pt]
\binom{z}{2} & \eta^r\binom{z+w}{2} & \eta^{2r}\binom{z+2w}{2} & \cdots & \eta^{(N-1)r}\binom{z+(N-1)w}{2} \\[7pt]
\vdots & \vdots & \vdots & \ddots & \vdots \\[7pt]
\binom{z}{N_r-1} & \eta^r\binom{z+w}{N_r-1} & \eta^{2r}\binom{z+2w}{N_r-1} & \cdots & \eta^{(N-1)r}\binom{z+(N-1)w}{N_r-1} \end{pmatrix}.\ee

Now set
\be
A=\begin{pmatrix}A(0)\\A(1)\\\vdots \\ A(k-1)\end{pmatrix}.\ee

\begin{lem}
\label{lemmauppertriangularA}
For $x, z\in\mathbb{C}$, $w\in\mathbb{C}^{\times}$ and $p, q\in\mathbb{N}$, the matrix 
\be
A(x, z, w, p, q)=
\begin{pmatrix}
\binom{z}{0} & x\binom{z+w}{0} & x^{2}\binom{z+2w}{0} & \cdots & x^{q-1}\binom{z+(q-1)w}{0} \\[7pt]
\binom{z}{1} & x\binom{z+w}{1} & x^{2}\binom{z+2w}{1} & \cdots & x^{q-1}\binom{z+(q-1)w}{1} \\[7pt]
\binom{z}{2} & x\binom{z+w}{2} & x^{2}\binom{z+2w}{2} & \cdots & x^{q-1}\binom{z+(q-1)w}{2} \\[7pt]
\vdots & \vdots & \vdots & \ddots & \vdots \\[7pt]
\binom{z}{p-1} & x\binom{z+w}{p-1} & x^{2}\binom{z+2w}{p-1} & \cdots & x^{q-1}\binom{z+(q-1)w}{p-1} \end{pmatrix}\ee is row equivalent to the matrix 
\be
A'(x, p, q)=
\begin{pmatrix}
\binom{0}{0} & x\binom{1}{0} & x^{2}\binom{2}{0} & \cdots & x^{q-1}\binom{q-1}{0} \\[6pt]
\binom{0}{1} & x\binom{1}{1} & x^{2}\binom{2}{1} & \cdots & x^{q-1}\binom{q-1}{1} \\[6pt]
\binom{0}{2} & x\binom{1}{2} & x^{2}\binom{2}{2} & \cdots & x^{q-1}\binom{q-1}{2} \\[6pt]
\vdots & \vdots & \vdots & \ddots & \vdots \\[6pt]
\binom{0}{p-1} & x\binom{1}{p-1} & x^{2}\binom{2}{p-1} & \cdots & x^{q-1}\binom{q-1}{p-1} \end{pmatrix}.\ee
\end{lem}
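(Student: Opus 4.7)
The plan is to identify the rows of both matrices with polynomials in the column index $j$ (scaled by $x^j$) and exploit the fact that the binomial coefficients $\binom{z+jw}{i}$ and $\binom{j}{i}$ are both polynomials in $j$ of degree exactly $i$. For $k = 0, 1, \ldots, p-1$, set
\[
v_k = \left(x^j \, j^k\right)_{j=0}^{q-1} \in \mathbb{C}^q,
\]
using the convention $0^0 = 1$. The strategy is to show that the row space of each of $A(x,z,w,p,q)$ and $A'(x,p,q)$ equals $\mathrm{span}\{v_0, v_1, \ldots, v_{p-1}\}$; since both matrices have $p$ rows, this common row space forces row equivalence.

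For $A(x,z,w,p,q)$, observe that
\[
\binom{z+jw}{i} = \frac{(z+jw)(z+jw-1) \cdots (z+jw-i+1)}{i!}
\]
is a polynomial in $j$ of degree exactly $i$, with leading coefficient $w^i/i! \neq 0$ because $w \in \mathbb{C}^\times$. Hence the $i$-th row $R_i$ of $A(x,z,w,p,q)$ admits the expansion
\[
R_i = \sum_{k=0}^{i} a_{ik}(z,w)\, v_k
\]
with $a_{ii}(z,w) = w^i/i! \neq 0$. The $p \times p$ matrix $(a_{ik})$ is upper triangular with nonzero diagonal, hence invertible. Inverting, each $v_k$ is a linear combination of $R_0, \ldots, R_{p-1}$, so the two families span the same subspace, namely $\mathrm{span}\{v_0, \ldots, v_{p-1}\}$. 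The same reasoning applied to $A'(x,p,q)$ uses that $\binom{j}{i}$ is a polynomial in $j$ of degree exactly $i$ with leading coefficient $1/i!$, yielding an analogous upper-triangular change of basis with nonzero diagonal, and therefore the same row space.

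Since both $A(x,z,w,p,q)$ and $A'(x,p,q)$ are $p \times q$ matrices with row space equal to $\mathrm{span}\{v_0, \ldots, v_{p-1}\}$, they are row equivalent. The argument is essentially structural: the only subtlety is verifying that the change-of-basis matrix is triangular with nonzero diagonal, which reduces to the degree and leading coefficient computation above. No serious obstacle is anticipated, and the argument works uniformly whether $p \leq q$ or $p > q$ (in the latter case, the vectors $v_0, \ldots, v_{p-1}$ are linearly dependent in $\mathbb{C}^q$, but the equality of row spaces still holds).
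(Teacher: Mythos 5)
Your proof is correct and takes a genuinely different, considerably more elementary route than the paper's. The paper proceeds by explicit row operations: it uses the Chu--Vandermonde identity to factor $A(x,z,w,p,q)=ZMH$ with $Z$ lower unitriangular and $H$ diagonal, then inductively constructs a chain of invertible matrices $P(n)$ reducing the matrix $M=\bigl(\binom{jw}{i}\bigr)$ to the standard Pascal matrix $\bigl(\binom{j}{i}\bigr)$. You instead observe that for fixed $i$ the entry function $j\mapsto\binom{z+jw}{i}$ (respectively $j\mapsto\binom{j}{i}$) is a polynomial in $j$ of exact degree $i$ with nonzero leading coefficient $w^i/i!$ (respectively $1/i!$), so the $i$-th row of each matrix lies in $\mathrm{span}\{v_0,\dots,v_i\}$ with nonzero $v_i$-component; the resulting change-of-basis matrix from $(v_k)$ to the rows is triangular with nonzero diagonal, hence invertible, giving both matrices the common row space $\mathrm{span}\{v_0,\dots,v_{p-1}\}$. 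Since two $p\times q$ matrices with the same row space share the same reduced row echelon form and are therefore row equivalent, the conclusion follows. The only slip is cosmetic: with $R_i=\sum_{k\le i}a_{ik}v_k$, the matrix $(a_{ik})$ (rows indexed by $i$, columns by $k$) is \emph{lower} triangular, not upper triangular as you wrote; this does not affect the argument. Your approach is cleaner and avoids all the explicit matrix bookkeeping, at the cost of being nonconstructive---the paper's proof, while heavier, actually exhibits the invertible row-operation matrix, which would matter if one ever wanted to compute it.
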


\begin{proof}
For $1\leq n\leq p-2$, let 
$$
M(n)=
\begin{pmatrix}
\binom{0}{0} & \binom{1}{0} & \binom{2}{0} & \cdots & \binom{q-1}{0} \\[6pt]
\binom{0}{1} & \binom{1}{1} & \binom{2}{1} & \cdots & \binom{q-1}{1} \\[6pt]
\vdots & \vdots & \vdots & \ddots & \vdots \\[6pt]
\binom{0}{n} & \binom{1}{n} & \binom{2}{n} & \cdots & \binom{q-1}{n} \\[6pt]
\binom{-1}{n}\binom{0}{1} & \binom{0}{n}\binom{w}{1} & \binom{1}{n}\binom{2w}{1} & \cdots & \binom{q-2}{n}\binom{(q-1)w}{1} \\[6pt]
\binom{-1}{n}\binom{0}{2} & \binom{0}{n}\binom{w}{2} & \binom{1}{n}\binom{2w}{2} & \cdots & \binom{q-2}{n}\binom{(q-1)w}{2} \\[6pt]
\vdots & \vdots & \vdots & \ddots & \vdots \\[6pt]
\binom{-1}{n}\binom{0}{p-n-1} & \binom{0}{n}\binom{w}{p-n-1} & \binom{1}{n}\binom{2w}{p-n-1} & \cdots & \binom{q-2}{n}\binom{(q-1)w}{p-n-1} \end{pmatrix}.
$$
Furthermore, let 
$$
M=
\begin{pmatrix}
\binom{0}{0} & \binom{w}{0} & \binom{2w}{0} & \cdots & \binom{(q-1)w}{0} \\[6pt]
\binom{0}{1} & \binom{w}{1} & \binom{2w}{1} & \cdots & \binom{(q-1)w}{1} \\[6pt]
\binom{0}{2} & \binom{w}{2} & \binom{2w}{2} & \cdots & \binom{(q-1)w}{2} \\[6pt]
\vdots & \vdots & \vdots & \ddots & \vdots \\[6pt]
\binom{0}{p-1} & \binom{w}{p-1} & \binom{2w}{p-1} & \cdots & \binom{(q-1)w}{p-1} \end{pmatrix}.
$$
We will show, for $1\leq n\leq p-2$, there exists an invertible matrix $P(n)$ such that $P(n)M = M(n)$.  We proceed by induction.  For $0\leq n\leq p-3$, define 
$$
Q(n)=
\begingroup
\setlength{\arraycolsep}{3pt}
\begin{pmatrix}
1 & \cdots & 0 & 0 & 0 & 0 & \cdots & 0 & 0 \\[3pt]
\vdots & \ddots & \vdots & \vdots & \vdots & \vdots & \ddots & \vdots & \vdots \\[3pt]
0 & \cdots & 1 & 0 & 0 & 0 & \cdots & 0 & 0 \\[7pt]
0 & \cdots & 0 & \frac{1}{(n+1)w} & 0 & 0 & \cdots & 0 & 0 \\[7pt]
0 & \cdots & 0 & \frac{1-(n+1)w}{(n+1)w} & \frac{2}{(n+1)w} & 0 & \cdots & 0 & 0 \\[7pt]
0 & \cdots & 0 & 0 & \frac{2-(n+1)w}{(n+1)w} & \frac{3}{(n+1)w} & \cdots & 0 & 0 \\[7pt]
\vdots & \ddots & \vdots & \vdots & \vdots & \vdots & \ddots & \vdots & \vdots \\[7pt]
0 & \cdots & 0 & 0 & 0 & 0 & \cdots & \frac{p-2-n-(n+1)w}{(n+1)w} & \frac{p-1-n}{(n+1)w} \end{pmatrix}\endgroup,
$$ where the identity submatrix has size $n+1$.  Letting $P(1)=Q(0)$, we see that the $ij$th entry of $P(1)M$ is $\binom{jw}{0}=\binom{j}{0}$ for $i=0$, $\frac{1}{w}\binom{jw}{1}=j=\binom{j}{1}$ for $i=1$ and 
\begin{align*}
\frac{i-1-w}{w}\binom{jw}{i-1}+\frac{i}{w}\binom{jw}{i}&=\frac{i-1-w}{w}\binom{jw}{i-1}+\frac{i}{w}\left(\frac{jw-i+1}{i}\right)\binom{jw}{i-1} \\
&=\frac{i-1-w}{w}\binom{jw}{i-1}+\frac{jw-i+1}{w}\binom{jw}{i-1} \\
&=\frac{jw-w}{w}\binom{jw}{i-1}=\binom{j-1}{1}\binom{jw}{i-1}
\end{align*} for $1<i\leq p-1$.  Thus, $P(1)M = M(1)$.  Also, since $Q(n)$ is clearly invertible for all $0\leq n\leq p-3$, $P(1)$ is invertible.  

Now, assuming that $P(n)$ exists and satisfies the given criteria, let $P(n+1) = Q(n)P(n)$.  The $ij$th entry of $P(n+1)M=Q(n)P(n)M=Q(n)M(n)$ is $\binom{j}{i}$ for $0\leq i\leq n$, $\frac{1}{(n+1)w}\binom{j-1}{n}\binom{jw}{1}=\frac{j}{(n+1)}\binom{j-1}{n}=\binom{j}{n+1}$ for $i=n+1$ and 
\begin{align*}
\mathmakebox[9em][l]{\frac{i-n-1-(n+1)w}{(n+1)w}\binom{j-1}{n}\binom{jw}{i-1-n}+\frac{i-n}{(n+1)w}\binom{j-1}{n}\binom{jw}{i-n}} \\
&=\frac{i-n-1-(n+1)w}{(n+1)w}\binom{j-1}{n}\binom{jw}{i-1-n} \\
&\phantom{{}={}}+\frac{i-n}{(n+1)w}\left(\frac{jw-i+n+1}{i-n}\right)\binom{j-1}{n}\binom{jw}{i-n-1} \\
&=\frac{i-n-1-(n+1)w}{(n+1)w}\binom{j-1}{n}\binom{jw}{i-1-n} \\
&\phantom{{}={}}+\frac{jw-i+n+1}{(n+1)w}\binom{j-1}{n}\binom{jw}{i-1-n} \\
&=\frac{jw-(n+1)w}{(n+1)w}\binom{j-1}{n}\binom{jw}{i-1-n} \\
&=\frac{j-(n+1)}{n+1}\binom{j-1}{n}\binom{jw}{i-1-n}=\binom{j-1}{n+1}\binom{jw}{i-(n+1)}
\end{align*} for $n+1<i\leq p-1$.  Therefore, $P(n+1)M=M(n+1)$.  By assumption, $P(n)$ is invertible, and since $Q(n)$ is invertible, $P(n+1)$ is also invertible.  

Let 
$$
Q=
\begin{pmatrix}
1 & \cdots & 0 & 0 \\[3pt]
\vdots & \ddots & \vdots & \vdots \\[3pt]
0 & \cdots & 1 & 0 \\[7pt]
0 & \cdots & 0 & \frac{1}{(p-1)w} \end{pmatrix},
$$ where the identity submatrix has size $p-1$, and let $P=QP(p-2)$.  Then the $ij$th entry of $PM=QP(p-2)M=QM(p-2)$ is $\binom{j}{i}$ for $0\leq i\leq p-2$ and $\frac{1}{(p-1)w}\binom{j-1}{p-2}\binom{jw}{1}=\frac{j}{p-1}\binom{j-1}{p-2}=\binom{j}{p-1}$ for $i=p-1$, so 
\be
\label{matrixPM}
PM=
\begin{pmatrix}
\binom{0}{0} & \binom{1}{0} & \cdots & \binom{q-1}{0} \\[6pt]
\binom{0}{1} & \binom{1}{1} & \cdots & \binom{q-1}{1} \\[6pt]
\vdots & \vdots & \ddots & \vdots \\[6pt]
\binom{0}{p-1} & \binom{1}{p-1} & \cdots & \binom{q-1}{p-1} \end{pmatrix}.\ee  
By the Chu-Vandermonde identity, the $ij$th entry of $A(x, z, w, p, q)$ can be written as $$x^{j}\binom{z+jw}{i}=x^{j}\sum_{m=0}^{i}\binom{z}{m}\binom{jw}{i-m},$$ so letting 
$$
Z=
\begin{pmatrix}
\binom{z}{0} & 0 & 0 & \cdots & 0 \\[6pt]
\binom{z}{1} & \binom{z}{0} & 0 & \cdots & 0 \\[6pt]
\binom{z}{2} & \binom{z}{1} & \binom{z}{0} & \cdots & 0 \\[6pt]
\vdots & \vdots & \vdots & \ddots & \vdots \\[6pt]
\binom{z}{p-1} & \binom{z}{p-2} & \binom{z}{p-3} & \cdots & \binom{z}{0} \end{pmatrix}\text{, }
H=
\begin{pmatrix}
1 & 0 & \cdots & 0 \\[4pt]
0 & x & \cdots & 0 \\[4pt]
\vdots & \vdots & \ddots & \vdots \\[4pt]
0 & 0 & \cdots & x^{q-1} \end{pmatrix},
$$ we have 
\be
A(x, z, w, p, q)=ZMH.\ee  
Since both $Q$ and $P(p-2)$ are invertible, $P$ is invertible.  Clearly, $Z$ is also invertible.  Thus, $A(x, z, w, p, q)=ZP^{-1}PMH$, where $ZP^{-1}$ is invertible, implying that $A(x, z, w, p, q)$ is row equivalent to $PMH=A'(x, p, q)$.  
\end{proof}

\begin{lem}
\label{lemmatwoAprimes}
Suppose $x, y\in\mathbb{C}^{\times}$ and $n, s, t\in\mathbb{N}$ such that $x\neq y$ and $s\geq t$.  Furthermore, let 
$$
C=
\begin{pmatrix}
0 & \cdots & 0 & \binom{s}{s} & \binom{s+1}{s}x & \binom{s+2}{s}x^{2} & \cdots & \binom{n-1}{s}x^{n-s-1} \\[7pt]
0 & \cdots & 0 & 0 & \binom{s+1}{s+1} & \binom{s+2}{s+1}x & \cdots & \binom{n-1}{s+1}x^{n-s-2} \\[7pt]
0 & \cdots & 0 & 0 & 0 & \binom{s+2}{s+2} & \cdots & \binom{n-1}{s+2}x^{n-s-3} \\[7pt]
\vdots & \ddots & \vdots & \vdots & \vdots & \vdots & \ddots & \vdots \\[7pt]
0 & \cdots & 0 & 0 & 0 & 0 & \cdots & \binom{n-1}{n-1} \end{pmatrix},
$$ where the number of zero columns in $C$ is $s$.  Then, the matrix 
$$
B=
\begin{pmatrix}
A'(x, s, n) \\[2pt]
A'(y, t, n) \end{pmatrix}
$$ is row equivalent to the matrix 
$$
B'=
\begin{pmatrix}
A'(x, s, n) \\[2pt]
A'(y-x, t, n-s)C \end{pmatrix}.
$$  
\end{lem}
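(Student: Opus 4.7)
The strategy is to exhibit an invertible matrix $P\in\mathrm{GL}_{s+t}(\mathbb{C}(x,y))$ satisfying $PB=B'$; this directly gives the row equivalence. Since $B$ and $B'$ share the top block $A'(x,s,n)$, I would take $P$ in block lower-triangular form
\[
P=\begin{pmatrix} I_{s} & 0 \\ M & N \end{pmatrix},
\]
so that $PB=B'$ reduces to the single matrix identity
\[
N\cdot A'(y,t,n)+M\cdot A'(x,s,n)=A'(y-x,t,n-s)\,C,
\]
with $N$ of size $t\times t$ and $M$ of size $t\times s$. The invertibility of $P$ is equivalent to that of $N$.

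Writing $z=y-x$, the key observation is the operator identity
\[
\bigl(A'(y-x,t,n-s)C\bigr)_{i,j}=\sum_{k\ge 0}\binom{k}{i}z^{k}\binom{j}{s+k}x^{j-s-k}=\frac{z^{i}}{i!}\,\frac{d^{i}}{dz^{i}}f_{0}(j),
\]
where $f_{0}(j):=\sum_{k\ge 0}z^{k}\binom{j}{s+k}x^{j-s-k}$ is the $(0,j)$-entry of the same product and the middle equality uses $\binom{k}{i}z^{k}=\frac{z^{i}}{i!}(d/dz)^{i}z^{k}$. For the base case $i=0$, I would apply the binomial theorem $y^{j}=(x+z)^{j}$ and split off the low-order terms $z^{m}$ with $m<s$ to obtain
\[
f_{0}(j)=z^{-s}\Bigl(y^{j}-\sum_{m=0}^{s-1}z^{m}x^{-m}\,x^{j}\binom{j}{m}\Bigr),
\]
valid for all $j$, both sides vanishing when $j<s$ since $\binom{j}{m}=0$ for $m>j$. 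This identifies row $0$ of $A'(y-x,t,n-s)C$ as $z^{-s}$ times row $0$ of $A'(y,t,n)$ minus an explicit $\mathbb{C}(x,y)$-combination of rows of $A'(x,s,n)$.

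For general $i$, I would apply $\frac{z^{i}}{i!}(d/dz)^{i}$ to the closed form above. Leibniz, together with $\frac{d^{r}}{dz^{r}}z^{-s}=(-1)^{r}s(s+1)\cdots(s+r-1)z^{-s-r}$ and $\frac{d^{r}}{dz^{r}}y^{j}=r!\binom{j}{r}y^{j-r}$, produces, after rewriting $\binom{j}{r}y^{j-r}=y^{-r}\cdot\binom{j}{r}y^{j}$, an expression of the form
\[
\text{row $i$ of } A'(y-x,t,n-s)C=\sum_{i'=0}^{i}\alpha_{i,i'}\bigl(\text{row $i'$ of }A'(y,t,n)\bigr)+\sum_{m=0}^{s-1}\beta_{i,m}\bigl(\text{row $m$ of }A'(x,s,n)\bigr),
\]
with
\[
\alpha_{i,i'}=\frac{(-1)^{i-i'}\,s(s+1)\cdots(s+i-i'-1)}{(i-i')!}\,z^{i'-s}\,y^{-i'}
\]
and explicit but (for invertibility) unnecessary $\beta_{i,m}$. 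Reading this off, $N=(\alpha_{i,i'})_{0\le i,i'<t}$ is lower triangular with diagonal entries $\alpha_{i,i}=z^{i-s}y^{-i}$; since $y\ne 0$ and $z=y-x\ne 0$ by hypothesis, $\det N=\prod_{i=0}^{t-1}z^{i-s}y^{-i}$ is a nonzero element of $\mathbb{C}(x,y)$, so $N$ is invertible, $P$ is invertible, and the row equivalence follows.

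The main technical obstacle is the Leibniz computation for arbitrary $i$: one must carefully track how the $y^{j-r}$ factors produced by differentiating $y^{j}$ combine into rows of $A'(y,t,n)$ weighted by $y^{-r}$, and verify that the signs and rising-factorial coefficients coming from the derivatives of $z^{-s}$ assemble into the clean lower-triangular pattern claimed for $N$. The hypothesis $y\in\mathbb{C}^{\times}$ is what allows the inversion of the $y^{r}$ factors, and $x\ne y$ is what makes $z^{-s}$ a nonzero element of the ground field.
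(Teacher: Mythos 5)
Your proof is correct and takes a genuinely different route from the paper. The paper proceeds ``forward'': it first constructs an explicit $t\times s$ block $Q$ with entries $-(y/x)^{i}\binom{j}{i}\bigl((y-x)/x\bigr)^{j-i}$, multiplies $Q'=\bigl(\begin{smallmatrix}I_s&0\\Q&I_t\end{smallmatrix}\bigr)$ into $B$, and directly checks (via the Chu--Vandermonde-type cancellation you reproduce implicitly) that the bottom block becomes $WC$ for an explicit $W$; it then peels off a diagonal factor $V$ to write $W=V\,A(y-x,s,1,t,n-s)$ and invokes Lemma \ref{lemmauppertriangularA} to row-reduce that to $A'(y-x,t,n-s)$. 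You instead proceed ``backward'': you derive a generating-function identity for the rows of $A'(y-x,t,n-s)C$ by observing that the $(i,j)$-entry is $\frac{z^i}{i!}\frac{d^i}{dz^i}f_0(j)$ with $z=y-x$, compute $f_0(j)$ in closed form via the binomial theorem, and then read off the block-lower-triangular $P=\bigl(\begin{smallmatrix}I_s&0\\M&N\end{smallmatrix}\bigr)$ with $PB=B'$ directly from Leibniz, with $N$ lower triangular and $\det N=\prod_{i=0}^{t-1}z^{i-s}y^{-i}\neq 0$. What your derivative trick buys is that it produces $N$ already in triangular form, so invertibility is immediate and you avoid the paper's second appeal to Lemma \ref{lemmauppertriangularA}; what the paper's route buys is that the coefficients of $Q$ are given in one stroke without a Leibniz expansion, at the cost of having to verify a less transparent algebraic cancellation and then chain through the earlier lemma. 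Both constructions are correct; yours is arguably cleaner as a self-contained argument, since the entire invertibility statement reduces to the visibly nonzero determinant of a triangular block.
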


\begin{proof}
Let $Q$ be the $t\times s$ matrix whose $ij$th entry is $-\left(\frac{y}{x}\right)^{i}\binom{j}{i}\left(\frac{y-x}{x}\right)^{j-i}$, where $0\leq i\leq t-1$ and $0\leq j\leq s-1$.  Then, let 
$$
Q'=
\begin{pmatrix}
I_s & 0 \\
Q & I_t \end{pmatrix},
$$ where $I_s$ and $I_t$ denote the identity matrices of size $s$ and $t$, respectively.  For $0\leq i\leq s-1$, the $ij$th entry of $Q'B$ is the $ij$th entry of $A'(x, s, n)$.  For $s\leq i\leq s+t-1$, the $ij$th entry of $Q'B$ is 
\begin{align*}
q_{ij}&=\sum_{m=0}^{s-1}\left(-\left(\frac{y}{x}\right)^{i-s}\binom{m}{i-s}\left(\frac{y-x}{x}\right)^{m-i+s}\right)\left(x^j\binom{j}{m}\right)+y^j\binom{j}{i-s} \\
&=-y^{i-s}\sum_{m=0}^{s-1}\binom{m}{i-s}\binom{j}{m}(y-x)^{m-i+s}x^{j-m}+y^j\binom{j}{i-s} \\
&=-y^{i-s}\sum_{m=i-s}^{\min\{j, s-1\}}\binom{m}{i-s}\binom{j}{m}(y-x)^{m-i+s}x^{j-m}+y^j\binom{j}{i-s} \\
&=-y^{i-s}\sum_{m=i-s}^{j}\binom{m}{i-s}\binom{j}{m}(y-x)^{m-i+s}x^{j-m} \\
&\phantom{{}={}}+y^{i-s}\sum_{m=s}^{j}\binom{m}{i-s}\binom{j}{m}(y-x)^{m-i+s}x^{j-m}+y^j\binom{j}{i-s} \\
&=-y^{i-s}\binom{j}{i-s}\sum_{m=i-s}^{j}\binom{j-i+s}{m-i+s}(y-x)^{m-i+s}x^{j-m} \\
&\phantom{{}={}}+y^{i-s}\sum_{m=s}^{j}\binom{m}{i-s}\binom{j}{m}(y-x)^{m-i+s}x^{j-m}+y^j\binom{j}{i-s},
\end{align*} where we have used the identity $\binom{a}{b}=0$ for $0\leq a < b$, the fact that $i-s\leq t-1\leq s-1$ and the identity $\binom{a}{b}\binom{c}{a}=\binom{c}{b}\binom{c-b}{a-b}$ for $0\leq b\leq a\leq c$.  If $j < i-s$, $\binom{j}{i-s}=0$ and we have 
$$
q_{ij}=y^{i-s}\sum_{m=s}^{j}\binom{m}{i-s}\binom{j}{m}(y-x)^{m-i+s}x^{j-m}.
$$  Otherwise, $0\leq j-i+s$ and, by the binomial theorem, we have 
\begin{align*}
q_{ij}&=-y^{i-s}\binom{j}{i-s}((y-x)+x)^{j-i+s} \\
&\phantom{{}={}}+y^{i-s}\sum_{m=s}^{j}\binom{m}{i-s}\binom{j}{m}(y-x)^{m-i+s}x^{j-m}+y^j\binom{j}{i-s} \\
&=-y^{i-s}\binom{j}{i-s}y^{j-i+s} \\
&\phantom{{}={}}+y^{i-s}\sum_{m=s}^{j}\binom{m}{i-s}\binom{j}{m}(y-x)^{m-i+s}x^{j-m}+y^j\binom{j}{i-s} \\
&=-y^{j}\binom{j}{i-s}+y^{i-s}\sum_{m=s}^{j}\binom{m}{i-s}\binom{j}{m}(y-x)^{m-i+s}x^{j-m}+y^j\binom{j}{i-s} \\
&=y^{i-s}\sum_{m=s}^{j}\binom{m}{i-s}\binom{j}{m}(y-x)^{m-i+s}x^{j-m}.
\end{align*}  In either case, 
$$
q_{ij}=y^{i-s}\sum_{m=s}^{j}\binom{m}{i-s}\binom{j}{m}(y-x)^{m-i+s}x^{j-m}.
$$  Note that, if $j < s$, this sum is empty, so $q_{ij}=0$.  Thus, 
$$
Q'B=
\begin{pmatrix}
A'(x, s, n) \\[2pt]
WC \end{pmatrix},
$$ where 
$$
W=
\begingroup
\setlength{\arraycolsep}{2pt}
\begin{pmatrix}
\binom{s}{0}(y-x)^{s} & \binom{s+1}{0}(y-x)^{s+1} & \cdots & \binom{n-1}{0}(y-x)^{n-1} \\[7pt]
\binom{s}{1}y(y-x)^{s-1} & \binom{s+1}{1}y(y-x)^{s} & \cdots & \binom{n-1}{1}y(y-x)^{n-2} \\[7pt]
\vdots & \vdots & \ddots & \vdots \\[7pt]
\binom{s}{t-1}y^{t-1}(y-x)^{s-t+1} & \binom{s+1}{t-1}y^{t-1}(y-x)^{s-t+2} & \cdots & \binom{n-1}{t-1}y^{t-1}(y-x)^{n-t} \end{pmatrix}\endgroup.
$$  Furthermore, $W=VA(y-x, s, 1, t, n-s)$, where 
$$
V=
\begin{pmatrix}
(y-x)^{s} & 0 & 0 & \cdots & 0 \\[6pt]
0 & y(y-x)^{s-1} & 0 & \cdots & 0 \\[6pt]
0 & 0 & y^2(y-x)^{s-2} & \cdots & 0\\[6pt]
\vdots & \vdots & \vdots & \ddots & \vdots \\[6pt]
0 & 0 & 0 & \cdots & y^{t-1}(y-x)^{s-t+1} \end{pmatrix}.
$$  Letting 
$$
V'=
\begin{pmatrix}
I_{s} & 0 \\
0 & V \end{pmatrix},
$$ we have 
$$
Q'B=
\begin{pmatrix}
A'(x, s, n) \\[2pt]
VA(y-x, s, 1, t, n-s)C \end{pmatrix}=
V'\begin{pmatrix}
A'(x, s, n) \\[2pt]
A(y-x, s, 1, t, n-s)C \end{pmatrix}.
$$  By Lemma \ref{lemmauppertriangularA}, there exists an invertible matrix $U$ such that $UA(y-x, s, 1, t, n-s)=A'(y-x, t, n-s)$.  Therefore, letting 
$$
U'=
\begin{pmatrix}
I_{s} & 0 \\
0 & U^{-1} \end{pmatrix},
$$ we have 
$$
Q'B=
V'\begin{pmatrix}
A'(x, s, n) \\[2pt]
U^{-1}A'(y-x, t, n-s)C \end{pmatrix}=
V'U'\begin{pmatrix}
A'(x, s, n) \\[2pt]
A'(y-x, t, n-s)C \end{pmatrix}=V'U'B'.
$$  Since $x\neq y$, $y-x\neq 0$ and, thus, $V$ is invertible.  It follows that $V'$ is invertible.  Clearly, $Q'$ and $U'$ are also invertible.  Therefore, $(U')^{-1}(V')^{-1}Q'B=B'$, where $(U')^{-1}(V')^{-1}Q'$ is invertible, implying that $B$ is row equivalent to $B'$.  
\end{proof}

\begin{thm} \label{GeneralizedPascal}
The matrix $A$ is invertible.\end{thm}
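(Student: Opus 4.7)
The plan is to reduce $A$ to a block upper-triangular form and then induct on $k$. First, applying Lemma \ref{lemmauppertriangularA} inside each horizontal band $A(r)$ (with $x = \eta^r$) shows that $A$ is row-equivalent to the stack whose $r$-th block is $A'(\eta^r, N_r, N)$, since those row operations stay within a single band. Permuting bands is a row permutation, hence preserves invertibility, so we may further assume the row counts are monotone $N_0 \ge N_1 \ge \cdots \ge N_{k-1}$, with the corresponding scalars $\mu_0, \ldots, \mu_{k-1}$ (a relabeling of $\eta^0, \ldots, \eta^{k-1}$) still pairwise distinct and nonzero.

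We then prove by induction on $k$ the slightly more general statement: for any pairwise distinct nonzero $\mu_0, \ldots, \mu_{k-1} \in \mathbb{C}$ and any weakly decreasing $N_0 \ge N_1 \ge \cdots \ge N_{k-1}$, the $N \times N$ stack of the blocks $A'(\mu_r, N_r, N)$, with $N = \sum_r N_r$, is invertible. The base case $k=1$ is immediate, since $A'(\mu_0, N_0, N_0)$ is upper-triangular with nonzero diagonal $1, \mu_0, \ldots, \mu_0^{N_0-1}$. For the inductive step we apply Lemma \ref{lemmatwoAprimes} successively to each pair (block $0$, block $r$) for $r = 1, \ldots, k-1$. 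Both hypotheses of that lemma, namely $N_0 \ge N_r$ and $\mu_0 \ne \mu_r$, are in force, and each application replaces block $r$ by $A'(\mu_r - \mu_0, N_r, N - N_0) \, C$ whose first $N_0$ columns vanish. Inspection of the proof of Lemma \ref{lemmatwoAprimes} shows the row operations only mix rows of blocks $0$ and $r$ and leave block $0$ untouched, so successive applications are mutually compatible; moreover the matrix $C$ there depends only on $\mu_0$ (and on $N_0, N$), not on $\mu_r$, so the \emph{same} $C$ appears in every application.

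After all $k-1$ reductions the transformed matrix has the block form
$$\begin{pmatrix} A'(\mu_0, N_0, N_0) & * \\ 0 & M' \, \tilde{C} \end{pmatrix},$$
where $\tilde{C}$ is the $(N-N_0) \times (N-N_0)$ non-vanishing portion of $C$, upper-triangular with $1$'s on its diagonal (hence of unit determinant), and $M'$ is the $(N-N_0) \times (N-N_0)$ stack of the blocks $A'(\mu_r - \mu_0, N_r, N - N_0)$ for $r = 1, \ldots, k-1$. The shifted scalars $\mu_r - \mu_0$ are pairwise distinct and nonzero and the sizes $N_1 \ge \cdots \ge N_{k-1}$ remain monotone, so the inductive hypothesis yields $\det M' \ne 0$; together with $\det A'(\mu_0, N_0, N_0) = \mu_0^{\binom{N_0}{2}} \ne 0$ and $\det \tilde{C} = 1$, this proves $\det A \ne 0$. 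The principal obstacle is the bookkeeping in the inductive step: one must verify from the formula for $C$ that it is truly independent of $r$, so that it factors out uniformly as $\tilde{C}$ across all later blocks, and one must preserve the ordering hypothesis $s \ge t$ of Lemma \ref{lemmatwoAprimes} throughout the recursion, which is precisely why the initial monotone permutation of the $N_r$'s is needed.
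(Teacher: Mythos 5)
Your proof is correct and follows essentially the same strategy as the paper: use Lemma \ref{lemmauppertriangularA} to reduce $A$ to the stack of $A'(\eta^r, N_r, N)$, then induct by moving the block of largest row count to the top, applying Lemma \ref{lemmatwoAprimes} to each remaining block (the same $C$ appearing in each application, since it depends only on the top block's data), and exhibiting the resulting block upper-triangular form. The only cosmetic difference is that you sort all $N_r$'s once at the outset and note that the ordering persists through the recursion, whereas the paper re-selects the maximal block at each inductive step; both variants are equally valid.
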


\begin{proof}
Let $m, n\in\mathbb{N}$.  Then, let $S=\left(s_0, \ldots, s_{m-1}\right)\in\mathbb{N}^m$ such that $s_0+\cdots +s_{m-1}=n$.  Also, let $X=\left(x_0, \ldots, x_{m-1}\right)\in\left(\mathbb{C}^{\times}\right)^m$ such that $x_0, \ldots, x_{m-1}$ are distinct complex numbers.  Finally, let 
$$
Q(m, n, S, X)=
\begin{pmatrix}
A'\left(x_0, s_0, n\right) \\
A'\left(x_1, s_1, n\right) \\
\vdots \\
A'\left(x_{m-1}, s_{m-1}, n\right) \end{pmatrix}.
$$  Proceeding by induction on $m$, we will show that $Q(m, n, S, X)$ is invertible for all $m, n, S, X$.  

If $m=1$, $Q(m, n, S, X)=A'\left(x_0, n, n\right)$.  This matrix is an upper triangular matrix whose diagonal entries are $x_0^i\neq 0$.  Thus, $Q(m, n, S, X)$ is invertible.  

Now, let $m\in\mathbb{N}$ and assume that $Q\left(m, n', S', X'\right)$ is invertible for all $n'\in\mathbb{N}$, $S'\in\mathbb{N}^m$, $X'\in\left(\mathbb{C}^{\times}\right)^m$ satisfying the given conditions.  By reordering the rows of 
$$
Q(m+1, n, S, X)=
\begin{pmatrix}
A'\left(x_0, s_0, n\right) \\
A'\left(x_1, s_1, n\right) \\
\vdots \\
A'\left(x_{m}, s_{m}, n\right) \end{pmatrix},
$$ we observe that $Q(m+1, n, S, X)$ is row equivalent to the matrix 
$$
Q'(m+1, n, S, X)=
\begin{pmatrix}
A'\left(x_{\sigma(0)}, s_{\sigma(0)}, n\right) \\[3pt]
A'\left(x_{\sigma(1)}, s_{\sigma(1)}, n\right) \\[3pt]
\vdots \\[3pt]
A'\left(x_{\sigma(m)}, s_{\sigma(m)}, n\right) \end{pmatrix},
$$ where $\sigma$ is a permutation of the set $\{0, \ldots, m\}$ such that $s_{\sigma(0)}=\max\left\{s_0, \ldots, s_m\right\}$.  Then, by Lemma \ref{lemmatwoAprimes}, $Q'(m+1, n, S, X)$ is row equivalent to 
$$
Q''(m+1, n, S, X)=
\begin{pmatrix}
A'\left(x_{\sigma(0)}, s_{\sigma(0)}, n\right) \\[3pt]
A'\left(x_{\sigma(1)}-x_{\sigma(0)}, s_{\sigma(1)}, n-s_{\sigma(0)}\right)C \\[3pt]
A'\left(x_{\sigma(2)}-x_{\sigma(0)}, s_{\sigma(2)}, n-s_{\sigma(0)}\right)C \\[3pt]
\vdots \\[3pt]
A'\left(x_{\sigma(m)}-x_{\sigma(0)}, s_{\sigma(m)}, n-s_{\sigma(0)}\right)C \end{pmatrix},
$$ where $C=\begin{pmatrix}0 & C' \end{pmatrix}$ and 
$$
C'=
\begin{pmatrix}
\binom{s_{\sigma(0)}}{s_{\sigma(0)}} & \binom{s_{\sigma(0)}+1}{s_{\sigma(0)}}x_{\sigma(0)} & \binom{s_{\sigma(0)}+2}{s_{\sigma(0)}}x_{\sigma(0)}^{2} & \cdots & \binom{n-1}{s_{\sigma(0)}}x_{\sigma(0)}^{n-s_{\sigma(0)}-1} \\[7pt]
0 & \binom{s_{\sigma(0)}+1}{s_{\sigma(0)}+1} & \binom{s_{\sigma(0)}+2}{s_{\sigma(0)}+1}x_{\sigma(0)} & \cdots & \binom{n-1}{s_{\sigma(0)}+1}x_{\sigma(0)}^{n-s_{\sigma(0)}-2} \\[7pt]
0 & 0 & \binom{s_{\sigma(0)}+2}{s_{\sigma(0)}+2} & \cdots & \binom{n-1}{s_{\sigma(0)}+2}x_{\sigma(0)}^{n-s_{\sigma(0)}-3} \\[7pt]
\vdots & \vdots & \vdots & \ddots & \vdots \\[7pt]
0 & 0 & 0 & \cdots & \binom{n-1}{n-1} \end{pmatrix}.
$$  Let $n'=n-s_{\sigma(0)}$, $S'=\left(s_{\sigma(1)}, \ldots, s_{\sigma(m)}\right)$ and $$X'=\left(x_{\sigma(1)}-x_{\sigma(0)}, \ldots, x_{\sigma(m)}-x_{\sigma(0)}\right).$$  Note that, since $x_0, \ldots, x_{m}$ are distinct nonzero complex numbers, $$x_{\sigma(1)}-x_{\sigma(0)}, \ldots, x_{\sigma(m)}-x_{\sigma(0)}$$ are also distinct nonzero complex numbers.  Therefore, we may write 
$$
Q\left(m, n', S', X'\right)=
\begin{pmatrix}
A'\left(x_{\sigma(1)}-x_{\sigma(0)}, s_{\sigma(1)}, n'\right) \\[3pt]
A'\left(x_{\sigma(2)}-x_{\sigma(0)}, s_{\sigma(2)}, n'\right) \\[3pt]
\vdots \\[3pt]
A'\left(x_{\sigma(m)}-x_{\sigma(0)}, s_{\sigma(m)}, n'\right) \end{pmatrix}.
$$  Let $T'=Q\left(m, n', S', X'\right)C'$.  Then, 
\begin{align*}
Q''(m+1, n, S, X)&=
\begin{pmatrix}
A'\left(x_{\sigma(0)}, s_{\sigma(0)}, n\right) \\[3pt]
Q\left(m, n', S', X'\right)C \end{pmatrix} \\&=
\begin{pmatrix}
A'\left(x_{\sigma(0)}, s_{\sigma(0)}, n\right) \\[3pt]
Q\left(m, n', S', X'\right)\begin{pmatrix}0 & C' \end{pmatrix} \end{pmatrix}=
\begin{pmatrix}
A'\left(x_{\sigma(0)}, s_{\sigma(0)}, n\right) \\[3pt]
\begin{pmatrix}0 & T' \end{pmatrix} \end{pmatrix}.
\end{align*}  By assumption, $Q\left(m, n', S', X'\right)$ is invertible.  Since $C'$ is clearly invertible, $T'$ is invertible.  It follows that $Q''(m+1, n, S, X)$ is row equivalent to 
$$
Q'''(m+1, n, S, X)=
\begin{pmatrix}
A'\left(x_{\sigma(0)}, s_{\sigma(0)}, n\right) \\[3pt]
\begin{pmatrix}0 & I_{n'} \end{pmatrix} \end{pmatrix}.
$$  This matrix is an upper triangular matrix whose diagonal entries are $x_{\sigma(0)}^i\neq 0$ for $0\leq i < s_{\sigma(0)}$ and $1$ for $s_{\sigma(0)}\leq i < n$.  Thus, $Q'''(m+1, n, S, X)$ is invertible.  It follows that $Q(m+1, n, S, X)$ is invertible.  

Finally, by Lemma \ref{lemmauppertriangularA}, the matrix 
$$
A=
\begin{pmatrix}
A(0) \\
A(1) \\
\vdots \\
A(k-1) \end{pmatrix}=
\begin{pmatrix}
A(1, z, w, N_0, N) \\[2pt]
A(\eta, z, w, N_1, N) \\[2pt]
\vdots \\[2pt]
A(\eta^{k-1}, z, w, N_{k-1}, N) \end{pmatrix}
$$ is row equivalent to the matrix 
$$
A'=
\begin{pmatrix}
A'(1, N_0, N) \\[2pt]
A'(\eta, N_1, N) \\[2pt]
\vdots \\[2pt]
A'(\eta^{k-1}, N_{k-1}, N) \end{pmatrix}=Q\left(k, N, \left(N_0, N_1, \ldots, N_{k-1}\right), \left(1, \eta, \ldots, \eta^{k-1}\right)\right).
$$  Since $A'=Q\left(k, N, \left(N_0, N_1, \ldots, N_{k-1}\right), \left(1, \eta, \ldots, \eta^{k-1}\right)\right)$ is invertible, $A$ is invertible.  
\end{proof}

\vspace{.3in}

\vspace{.2in}

\noindent{\small \sc Department of Mathematics, Randolph College, 
Lynchburg, VA 24503} \\ {\em E--mail address}: mpenn@randolphcollege.edu

\vspace{.2in}
\noindent{\small \sc Department of Mathematics and Computer Science, Ursinus College, 
Collegeville, PA 19426} \\ {\em E--mail address}: csadowski@ursinus.edu

\vspace{.2in}
\noindent{\small \sc Department of Mathematics, University of Oregon, 
Eugene, OR 97403} \\ {\em E--mail address}: gwebb@uoregon.edu

\end{document}